\documentclass{amsart}
\usepackage{graphicx} 

\usepackage{comment}

\usepackage{longtable}
\usepackage[acronym]{glossaries}
\usepackage[dvipsnames]{xcolor}

\newcommand{\bea}{\begin{eqnarray}}
\newcommand{\eea}{\end{eqnarray}}
\newcommand{\be}{\begin{equation}}
\newcommand{\ee}{\end{equation}}
\newcommand{\tensor}{\otimes}
\newcommand{\Hom}{\mathrm{Hom}}
\newcommand{\Atn}{A^{\tensor n}}
\newcommand{\Atm}{A^{\tensor m}}
\newcommand{\pri}{^{\,\prime}}
\newcommand{\inv}{^{-1}}
\newcommand{\eps}{\varepsilon}

\newcommand{\id}{\mathrm{id}}
\newcommand{\eul}{\mathbf{e}}
\newcommand{\bige}{\mathbf{E}}

\newcommand{\ol}{\overline}
\newcommand{\cob}{$\mathbf{2Cob}$\,}
\newcommand{\mgnbar}{\overline{\mathcal{M}}_{g,n}}
\newcommand{\Mbar}{{\overline{\mathcal{M}}}}
\newcommand{\la}{{\langle}}
\newcommand{\ra}{{\rangle}}

\usepackage{amsmath}
\usepackage{amsthm}
\usepackage{amsfonts}
\usepackage{bbm}
\usepackage{amssymb}
\usepackage{geometry}
\usepackage{tikz}
\usepackage{tikz-cd}
\usepackage{txfonts}
\usepackage[inline, shortlabels]{enumitem}
\usepackage{blkarray}
\usetikzlibrary{arrows,patterns,decorations}
\usepackage{titleref}
\usepackage[normalem]{ulem} 
\usepackage{float}
\usepackage{graphicx}
\usepackage{dcolumn}
\usepackage{mathptmx}
\usepackage{etoolbox}
\usepackage{xcolor}
\usepackage[caption=false]{subfig}
\usepackage{tabularx}
\usepackage{hyperref}

\newtheorem{Lem}{Lemma}[section]
\newtheorem{Thm}[Lem]{Theorem}
\newtheorem{Cor}[Lem]{Corollary}

\newtheorem{Prop}[Lem]{Proposition}

\theoremstyle{definition}
\newtheorem{Ex}[Lem]{Example}
\newtheorem{Def}[Lem]{Definition}
\newtheorem{Rmk}[Lem]{Remark}
\newtheorem{Quest}[Lem]{Question}

\title{Almost TQFTs via colored ribbon graphs}
\author{William Davis and Olivia Dumitrescu}
\date{March 2026}
\address{University of North Carolina at Chapel Hill, 340 Phillips Hall, CB 3250 NC 27599-3250 email:dolivia@unc.edu, willdavismath@gmail.com}

\usepackage[ maxbibnames=10, backend=biber]{biblatex}
\defbibheading{References}{References}

\begin{document}


\begin{abstract}
In this paper, we introduce \textit{ribbon TQFTs} via Edge Contraction/Construction Axioms of colored ribbon graphs as an extension of the 2D TQFT axioms for ribbon graphs formulated in \cite{DM_ribbon}. We investigate nearly Frobenius structures and Almost TQFTs defined in \cite{GLSU} together with ribbon TQFTs. We give a classification result for ribbon TQFTs that extends the one obtained for Frobenius algebras in \cite{DM_ribbon}. In particular, the Edge Contraction/Construction Axioms of colored ribbon graphs in this work become equivalent to the functorial Axioms of TQFTs governed by the sewing principle of Atiyah and Segal discussed  in \cite{DaDu} and \cite{GLSU}. As an application, we obtain that the recursion of generalized Catalan numbers can be twisted by Almost TQFT for co-unital nearly Frobenius algebra.



\end{abstract}

\keywords{ Almost TQFT, Frobenius structures, nearly Frobenius algebras, 2D TQFT, Colored ribbon graphs}
\subjclass[2020]{Primary: 14C20 Secondary:  14E05, 14E30, 14J45}

\maketitle
\tableofcontents

\section{Introduction}

In \cite{DMSS} the authors introduced {\it generalized Catalan numbers}
as a count of a particular type of ribbon graphs that satisfy a recursion relation similar to the {\it Cut and Join Equation of Hurwitz numbers}, we review the Catalan recursion in Section \ref{Catalan}. The formalism of Topological Recursion in \cite{DMSS} implies that the generating function of the Catalan numbers is related to the Gromov-Witten invariants of a point \cite{W1991}. These invariants related to the intersection theory on the moduli space of stable curves  $\overline{\mathcal{M}_{g,n}}$,  were first computed by Kontsevich in \cite{Kontsevich_intersection} via the theory of trivalent ribbon graphs and matrix integrals, solving a conjecture of Witten \cite{W1991}. The recursion relation of \cite{DMSS},  is one example of an enumerative problem known to give rise to the intersection numbers of $\overline{\mathcal{M}_{g,n}}$, the Deligne-Mumford moduli space of stable algebraic curves of genus $g$ and $n$ distinct marked points. 
 In an influential paper of Harer and Zagier \cite{HZ} the authors compute the Euler characteristic of $\mathcal{M}_{g,n}$, using a recursion formula (page 475) that is closely related to the recursion of the Catalan numbers of \cite{DMSS}.
 
 There have been many developments in the theory of Hurwitz numbers  and its connection to topological recursion, ELSV formulas, or intersection numbers of $\overline{\mathcal{M}_{g,n}}$, etc. (see for example  \cite{Sh,EMS,OP}). We recall that the orbifold Hurwitz numbers can also be computed via a count of ribbon graphs with additional information of points attached at each vertex (see \cite{DM_Hurwitz}). The Edge Contraction axioms applied to this particular count of ribbon graphs recover the Cut and Join Equation for orbifold Hurwitz numbers. While restricting to genus 0 and 1 marked point, the recursion relation of ribbon graphs becomes a count of trees and gives rise to  the spectral curve of the Hurwitz numbers, called the Lambert curve \cite{DM_Hurwitz}, the computation is similar to the one described in Section \ref{Catalan} for Catalan numbers. 


 In enumerative geometry some important invariants are related to the intersection of cohomology classes on moduli space of curves. 
For example, Hurwitz numbers can be expressed in terms of correlators of the cotangent classes on $\overline{\mathcal{M}_{g,n}}$,
and the total Chern class of the rank $g$ Hodge bundle  $c(\mathbb{E})\in H^*(\overline{\mathcal{M}_{g,n}},\mathbb{Q})$.  Similarly the Weil Petersson volume of Mirzakhani's work \cite{M_inventiones,M_jams} can be expressed in terms of the correlators between the cotangent classes and the exponential of the kappa class, $exp(\kappa_1)\in H^*(\overline{\mathcal{M}_{g,n}}, \mathbb{Q})$. These numbers satisfy a recursion relation that encodes in their generating functions Gromov-Witten invariants of the point. This shows an interesting similitude between {\it the Hurwitz numbers,  the Weil Petersson volumes and the Catalan numbers}. We recall that the total Chern class of the Hodge bundle, $c(\mathbb{E})$ and the exponential of the kappa class $exp(\kappa_1)$ are examples of cohomological field theories. It is an interesting question to describe the  Catalan numbers $C_{g,n}(\mu_1,\dots,\mu_n)$ in terms of correlators of
cotangent classes on $\overline{\mathcal{M}_{g,n}}$ and a cohomological field theory  on the moduli space of $n$-pointed stable curves of genus $g$.


The ribbon graphs operations induced by pairs of pants decomposition of Riemann surfaces can further be used to define maps between moduli spaces of curves, and give an enumerative approach to the classification results for Cohomological Field Theories. Namely, in \cite{DM3} (in progress), the authors extended the ribbon TQFT formalism for Frobenius algebras to a formalism for Cohomological Field Theories of \cite{KM} using the theory of ribbon graphs. This combinatorial approach gives rise to a classification result  for Cohomological Field Theories defined on semi-simple Frobenius algebras,
 similar to Theorem \ref{thm B}, using the formalism of ribbon graphs developed by Thurston et al. in \cite{STT}, Strebel in \cite{Strebel},  Mulase and Penkava in \cite{MP} and others.

It is an interesting problem to generalize Theorem \ref{thm:coloredcatalan}, by deriving a Catalan recursion twisted by Cohmological Field Theories and further relate it to enumerative geometry. One motivating question in this direction {\it can the twisted Catalan numbers by cohomological field theories be interpreted in the topological recursion formalism to give information on the enumerative geometry and intersection numbers of moduli spaces?}

\vskip 10pt

This paper is organized as follows. In Section \ref{Catalan} we recall the recursion of generalized Catalan numbers and its connection with Gromov Witten invariants of a point. In Section \ref{Frob section} we offer a generous introduction to Frobenius algebras, Nearly Frobenius algebras,  2D TQFTs, we summarize the main steps for the classification of 2D TQFTs and we state Theorem \ref{theorem A}, the general classification result for Almost TQFTs recently given by the authors in \cite{DaDu}. We recall that the main classification result TQFTs via ribbon graphs in \cite{DM_ribbon} leverages the isomorphism between a Frobenius algebra and its dual. However, the difficulty in extending this result to Nearly Frobenius algebras is that there is no longer an isomorphism between the algebra and its dual, so instead we need to modify the methodologies used to obtain that result.

In \cite{DM_ribbon}  the authors introduce a set of axioms in which {\it contracting a straight edge} of a cell graph acts like {\it multiplication} on an algebraic structure and {\it contracting a loop} at a single vertex acts like {\it comultiplication}. In this work we adopt these axioms, and we further introduce additional ones for colored cell graph in Section \ref{SectionCCG}. In Section \ref{SecInfiniteCCG}, we apply colored cell graph axioms to potentially infinite-dimensional nearly Frobenius algebras.

More precisely, in Section \ref{SectionCCG} we introduce the Ribbon TQFT for Nearly Frobenius algebras,  Definition \ref{def:ribbonTQFT} via a set of axioms of  four types,  Definitions \ref{def:redaxioms}, \ref{Output edge-construction axioms},  and \ref{def:flowaxioms}. The axioms of the first type are similar to those used on the Frobenius algebra in the original classification result, those of the second type act as an analog of the first type for elements of the dual on the output side of the TQFT, and the axioms of the third type are inspired by topological consequences in the connections between the first two. The fourth axiom is needed for Nearly Frobenius algebras which no longer have a unit, while for Frobenius algebras it is implied by the first two types of axioms and the existence of the unit (see Lemma \ref{lem:redloopremoval}).


We recall the technique of pairs of pants decomposition for Riemann surfaces explored by Eynard and Orantin \cite{EO_communications,EO_preprint}, and previously by Mirzakhani \cite{M_inventiones,M_jams} is a fundamental tool serving as a foundation for the framework of topological recursion. Recall that a cell graph can be thought of as a graph embedded on such a Riemann surface, so the topological recursion formalism is what serves as the inspiration for the axioms of the first two types, depending on whether this decomposition occurs at boundary components considered to be ``inputs'' or ``outputs'' of the graph, which we color by red and blue respectively.

\begin{figure}[ht]
    \centering
	\includegraphics[width=0.3\linewidth]{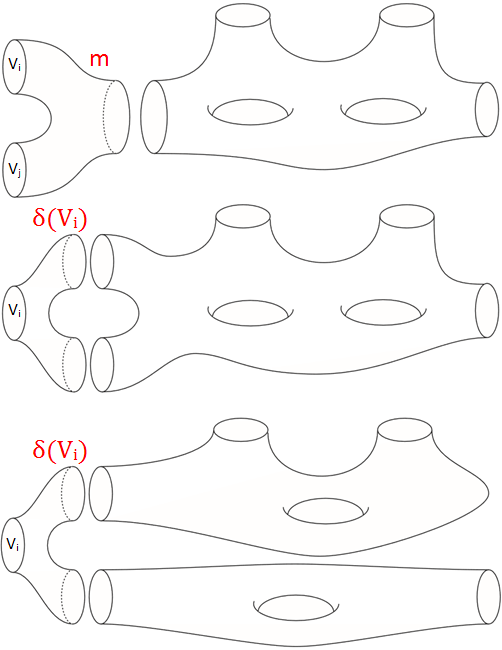}
	\caption{Pairs of pants decomposition of Riemann surfaces leading to topological recursion}
	\label{fig:toprecursion}
\end{figure}

  The flow axioms on colored ribbon graphs we give in Section \ref{def:flowaxioms} were inspired by graph techniques introduced by Kontsevich et al. in \cite{KTV} where the authors use ribbon graph operations to describe moduli spaces related to a construction of Lurie \cite{Lurie}.

In Section \ref{Section 6} we focus on Frobenius algebras and prove a classification result for Ribbon TQFT, Theorem \ref{thm:cellgraphclassification}, extending the main result of paper \cite{DM_ribbon} (see also \cite{Dumitrescu_report,DM_invitation}). In Section \ref{SecInfiniteCCG} we prove the classification result for Almost TQFTs introduced in this paper, for Nearly Frobenius Algebras, Theorem \ref{thm:infcellgraph}. More explicitly,

\begin{Thm}\label{thm B}
    Let $A$ be a nearly Frobenius algebra with coproduct $\delta$ and the Euler map $\bige$. Then the {\it ribbon TQFT} defined in \ref{def:ribbonTQFT} associated to a colored cell graph $\gamma$ of type $(g,n,m)$ with $n,m\geq 1$, is independent of the graph and is given by  
    \be
    \Omega(\gamma)(v_1,\ldots,v_n)=\delta^{m-1}(\bige^g(v_1\cdots v_n)).
    \ee
    Where $\delta^0$ is the identity map (for the case where $m=1$).
\end{Thm}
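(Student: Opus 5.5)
The proof will go by induction on the complexity of the colored cell graph $\gamma$. I take the complexity to be the number of edges, or more precisely the pair (number of edges, $2g-2+n+m$). The aim is to use the Edge Contraction/Construction axioms of Definitions \ref{def:redaxioms}, \ref{Output edge-construction axioms} and \ref{def:flowaxioms}, together with the red loop removal axiom, to reduce any $\gamma$ of type $(g,n,m)$ to a minimal graph. On that minimal graph $\Omega$ is forced by the normalization in Definition \ref{def:ribbonTQFT}.

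\textbf{Base case.} The natural minimal graph has type $(0,1,1)$: a single red vertex and a single blue face, with no edges. Here $\Omega$ should be the identity, so $\Omega(\gamma)(v_1)=v_1=\delta^0(\bige^0(v_1))$.

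\textbf{Inductive step.} I would proceed in the following order.
\begin{enumerate}
\item Contract a straight edge joining two distinct red vertices. By the first type of axiom this replaces $(v_i,v_j)$ by $v_iv_j$ and lowers $n$ by one. Repeating this collapses all red vertices into one, and the value becomes $\Omega(\gamma')(v_1\cdots v_n)$.
\item Remove a loop at the single vertex. A red loop that separates no blue faces is handled by the red loop axioms, and this is where genus enters: a genus-carrying pair of loops contributes $\mu\circ\delta=\bige$. Inducting on $g$ gives the factor $\bige^g$. For nearly Frobenius algebras there is no unit, so I must use the fourth axiom, rather than Lemma \ref{lem:redloopremoval}, to delete a loop that contributes nothing.
\item A loop whose removal separates two blue (output) faces acts by the output construction axioms as $\delta$. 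Each such split increases the number of outputs by one, which accounts for $\delta^{m-1}$.
\end{enumerate}

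\textbf{Order of the operations.} Commutation must be checked so that the answer is graph-independent. The flow axioms in Definition \ref{def:flowaxioms} are designed to move between input-side and output-side decompositions, and I would invoke them to show that all the $\bige$ factors may be applied before the coproducts. This uses the nearly Frobenius compatibility $\delta(ab)=\delta(a)b=a\delta(b)$, which gives that $\bige$ is a module map and that $\delta\circ\bige=(\bige\otimes 1)\circ\delta$. With these, any ordering yields $\delta^{m-1}(\bige^g(v_1\cdots v_n))$. Coassociativity makes $\delta^{m-1}$ well defined, and cocommutativity handles the ordering of the outputs.

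\textbf{Main obstacle.} I expect the hardest part to be showing that every colored cell graph with $n,m\ge1$ actually admits a reducing move of one of these types. This is a nontrivial claim, because without a unit or counit one cannot freely create or remove degree-zero pieces. I would first try an Euler characteristic count, $v-e+f=2-2g$ with $v=n$ and $f=m$. It shows that a graph with $e>0$ has either an edge between distinct vertices or loops, and that each loop is either nonseparating (a genus loop) or separates blue faces. The delicate point is verifying that the fourth axiom covers the degenerate loops that bound a single face.
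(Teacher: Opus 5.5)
Your proposal has a genuine gap, and it starts with a misreading of the objects. In the setup of Section \ref{chap:ccg}, the $m$ outputs of a graph of type $(g,n,m)$ are blue \emph{vertices}, not faces. Faces carry no color, and the vertex count is $n+m$ plus the number of flow vertices, so your Euler characteristic count with $v=n$, $f=m$ does not describe these graphs.

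This breaks your base case. A single red vertex with no edges is the $(0,1,0)$ graph. Definition \ref{def:flowaxioms} assigns it the counit, it is excluded by $m\ge 1$, and it has no value at all when $A$ is not counital. The paper's base case is one input vertex joined to one output vertex by a single edge, which is assigned $\id$.

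Your step (3) also inverts the output axioms. By OECA 1, it is a \emph{straight edge} between two output vertices that produces $\delta\circ_j$. A loop at an output vertex (OECA 2a/2b) produces a multiplication. A loop at an input vertex (IECA 2) does apply $\delta$, but it feeds both factors into input slots and raises $n$, so it cannot account for $\delta^{m-1}$.

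The more serious omission is that your reduction never handles edges joining an input vertex to an output vertex, or flow vertices. No IECA or OECA can contract such edges. Step (1) presumes the red vertices are linked by red edges, but in general they are connected only through blue or flow vertices; the normal form of Definition \ref{def:looplessnormal} has no red--red edges at all. After all monochromatic edges are removed, you are left with a disjoint union of bipartite graphs on which your induction has no move. Examples are a $(0,1,1)$ graph with $k$ parallel edges, or a one-face bipartite graph of genus $g$.

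This is where the paper does its real work, using VSDEC and DER:
\begin{itemize}
\item ``z=x'' moves shorten each face boundary until DER deletes an edge (Lemma \ref{lem:onlyonefacebip}).
\item Splitting and contracting turns the resulting spanning tree into a star with $2g$ loops at the central flow vertex (Lemma \ref{lem:onefaceisstar}).
\item The leaves are reordered (Lemma \ref{lem:reorderleaves}).
\item The loops are transferred by VSDEC to an input vertex and contracted as petals, producing $\bige^g$ (Lemma \ref{lem:redpetal}).
\item The remaining star is split into the normal-form tree, whose value $\delta^{m-1}(v_1\cdots v_n)$ is computed by IECA 1 and OECA 1 (Lemmas \ref{lem:staristree} and \ref{lem:bipartitetreeform}).
\end{itemize}
Using the flow axioms only to commute $\bige$ past $\delta$ misses this role. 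Your identities $\bige(ab)=a\,\bige(b)$ and $\delta\circ\bige=(\bige\otimes\id)\circ\delta$ are correct, but they only enter at the final assembly.

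One part does match the paper's handling of the non-unital case: contracting straight input edges before input loops, and using axiom (4) for disc-bounding loops. This ordering prevents IECA 2b from splitting off a one-colored component.
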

Notice that even in the finite dimensional case, there exist nearly Frobenius algebra structures which are not counital, see Example \ref{ex:finiteNFA}. Therefore, Theorem \ref{thm B} provides a more general classification result than was obtained in \cite{DaDu}, provided that we impose one additional constraint on the topological type of the cell graph.
\begin{Thm}\label{prop:1.2}
    Let $A$ be a counital nearly Frobenius algebra with counit $\eps$, Euler map $\bige$, and $m=0$. Consider $\gamma$ a colored cell graph of type $(g,n,0)$ with $n\geq 1$, and $\Omega(\gamma)\in\Hom(A^{\tensor n},K)$ an assignment of a multilinear map satisfying the Edge Contraction Axioms defined in \ref{SecRedAxioms}. The ribbon TQFT associated to the nearly Frobenius algebra $A$ is independent of the graph and is given by
    \be
    \Omega(\gamma)(v_1,\ldots,v_n)=\eps(\bige^g(v_1\cdots v_n)).
    \ee
\end{Thm}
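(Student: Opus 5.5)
Since $m=0$, every vertex of $\gamma$ is red (input), and $\Omega(\gamma)$ takes values in $K$. We use only the Edge Contraction Axioms of Section \ref{SecRedAxioms}, together with a base case for the one-vertex, no-edge graph of type $(0,1,0)$. The argument is an induction on the number of edges $|E(\gamma)|$, which we hope to reduce to the Frobenius-algebra computation of \cite{DM_ribbon}, with the counit $\eps$ playing the role of the Frobenius trace. We first pin down the base case: for the graph with a single vertex and no edges we take $\Omega(\gamma)(v)=\eps(v)$. This is the normalization in the definition of the ribbon TQFT for counital algebras.

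\textbf{Induction step.} Choose any edge $e$ of $\gamma$. There are three cases.
\begin{itemize}
\item[(a)] If $e$ is a straight edge joining distinct vertices $i$ and $j$, the contraction axiom gives $\Omega(\gamma)(v_1,\ldots,v_n)=\Omega(\gamma')(v_iv_j,\ldots)$. Here $\gamma'$ has type $(g,n-1,0)$. By induction the right side equals $\eps(\bige^g(v_1\cdots v_n))$, using commutativity and associativity.
\item[(b)] If $e$ is a loop separating the ribbon graph, it produces $\gamma_1\sqcup\gamma_2$ of types $(g_1,|I|+1,0)$ and $(g_2,|J|+1,0)$ with $g_1+g_2=g$. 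The axiom reads $\Omega(\gamma)=\sum \Omega(\gamma_1)(\ldots,\delta_{(1)}(v_i))\,\Omega(\gamma_2)(\ldots,\delta_{(2)}(v_i))$. By induction this is $(\eps\otimes\eps)$ applied to products with $\delta(v_i)$. The coassociativity/counit compatibility $(\eps\otimes \id)\delta=\id$ and the bimodule property $\delta(ab)=a\,\delta(b)=\delta(a)\,b$ then collapse it to $\eps(\bige^{g_1}\bige^{g_2}v_1\cdots v_n)$.
\item[(c)] If $e$ is a nonseparating loop, we get $\gamma'$ of type $(g-1,n+1,0)$ with $\Omega(\gamma)=\Omega(\gamma')(\ldots,\delta(v_i))$. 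By induction this equals $\eps(\bige^{g-1}\,\mu\delta(v_i)\prod_{k\ne i}v_k)$. Using $\mu\delta(v)=\bige\, v$, where $\bige=\mu\delta(1)$ is read via the bimodule property, we obtain the claimed formula.
\end{itemize}

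\textbf{Identities needed.} The key algebraic facts to verify, for a nearly Frobenius algebra without unit, are:
\begin{itemize}
\item $\mu\circ\delta$ is multiplication by the Euler element $\bige$, interpreted as the Euler map;
\item $(\eps\otimes\eps)\bigl((a\otimes b)\,\delta(c)\bigr)=\eps(abc)$.
\end{itemize}
The second identity follows from $\delta$ being a bimodule map and $\eps$ being a counit. The first must be handled carefully because $1$ may not exist. We would define $\bige$ as the map $v\mapsto \mu\delta(v)$ and check that it commutes with multiplication, so that $\bige^g(v_1\cdots v_n)$ is well defined. Everything is then independent of the order of contractions.

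\textbf{Main obstacle.} The hardest part is the separating-loop case (b) when one component becomes an edgeless vertex. Another delicate point is ensuring that the induction terminates in the base case without ever needing a unit. Such a unit is what \cite{DM_ribbon} used, through Lemma \ref{lem:redloopremoval}. To handle this, we first use straight-edge contractions to reduce to a single vertex, and then contract loops. A separated piece carrying only a $\delta$-factor and no edges is evaluated by $\eps$ directly, via the counit axiom, rather than by inserting a unit.
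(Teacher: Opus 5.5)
Your proof is correct, but it takes a different route from the paper. The paper only sketches its argument in the introduction, deferring to \cite{DM_ribbon}. It runs a fixed procedure: contract all straight edges down to a one-vertex bouquet, discard disc-bounding loops by invoking axiom (4) of Definition \ref{def:ribbonTQFT} (see Remark \ref{rmk:loopremoval}), then read each remaining unit of genus as one application of $\bige$.

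You instead use strong induction on the number of edges with an arbitrary edge. You check that IECA 1 and both forms of IECA 2 each reproduce $\eps(\bige^g(v_1\cdots v_n))$. This buys two things:
\begin{enumerate}
\item Order-independence is automatic, so your remark about contracting straight edges first is unnecessary in your own argument.
\item Axiom (4) becomes redundant once a counit exists. The disconnected case with an edgeless component is evaluated by the normalization $\Omega(\gamma_{0,1})=\eps$ together with $(\eps\otimes\id)\delta=\id$. This is Case 1 of Lemma \ref{lem:redloopremoval} with the unit removed from its proof.
\end{enumerate}
You are right to impose that normalization explicitly, since without it $\Omega\equiv 0$ satisfies the contraction axioms.

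Your route also handles explicitly separating loops with positive genus on both sides, which the paper's sketch leaves implicit. You do this via $(\eps\otimes\eps)\bigl((a\otimes b)\delta(c)\bigr)=\eps(abc)$ and the fact that $\bige$ is an $A$-bimodule map. When $I$ or $J$ is empty you additionally need $\delta\circ\bige=(\bige\otimes\id)\circ\delta$. This follows from the Frobenius relation together with coassociativity, which you name.

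What the paper's approach buys is uniformity with the non-counital setting of Theorem \ref{thm:infcellgraph}, where no counit is available and disc-loop removal must be assumed. What yours buys is a self-contained argument in the counital case.
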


\noindent We note that the axioms in Definition \ref{SecRedAxioms} precisely recover the axioms of \cite{DM_ribbon} in the case that $m=0$. Therefore, the proof of Theorem \ref{prop:1.2} follows the same arguments of \cite{DM_ribbon} with the additional consideration of the Remark \ref{rmk:loopremoval}, ie the disc-bounding loop removal, Case (4) of the Ribbon TQFT Definition \ref{def:ribbonTQFT}. The idea of this proof uses the graph independence of the edge contraction operations and hinges on the following process: we contract all straight edges of a cell graph until we are left with a graph of one vertex, and some number of loops. The loops are either loops that bound a disc or those that enclose some positive genus. We may remove each disc bounding loop, and the contraction of the remaining loops corresponds to applying the Euler map once for each genus captured by the remaining loops.\\
\\
\noindent Further, we recall Corollary 1.3 of \cite{DaDu} stating that for counital nearly Frobenius algebras, the classification of ribbon graph TQFTs given by Theorems \ref{thm B} and \ref{prop:1.2} is the same as the classification of Almost TQFTs in Theorem 1.2 of \cite{DaDu}. However, Theorem \ref{thm B} gives a more general classification of ribbon graph TQFTs in the case where a counit may not exist.\\
\\
\noindent In Section \ref{twisted Catalan} we combine results proved in this paper to generalize the Catalan recursion twisted by Almost TQFT, for Nearly Frobenius Algebras.  More precisely, the Catalan recursion \ref{eqn:catalanrecursion} can be twisted by Almost TQFT \ref{twistedtqft}. We refer to Theorem \ref{thm:coloredcatalan} for an explicit recursion formula.

\begin{Thm}\label{thm:nearlycatalantwist}
For $A$  a counital nearly Frobenius algebra with counit $\eps$, Euler map $\bige$, then the generalized Catalan recursion can be twisted by the Almost TQFT. 
	
\end{Thm}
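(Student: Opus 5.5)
The plan is to twist the Catalan recursion (\ref{eqn:catalanrecursion}) graph by graph, using the graph independence of Theorem \ref{prop:1.2}. Recall that the generalized Catalan number $C_{g,n}(\mu_1,\ldots,\mu_n)$ counts arrowed cell graphs of type $(g,n)$ whose vertex degrees are $\mu_1,\dots,\mu_n$. The recursion comes from a bijective operation on such graphs: remove or contract the arrowed edge at vertex $1$. Depending on the edge, this produces one of three things. It may give a graph of type $(g,n-1)$, when the edge joins vertex $1$ to another vertex $j$. It may give a graph of type $(g-1,n+1)$, when the edge is a loop at vertex $1$ and the surface stays connected after cutting. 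Or it may give a pair of graphs of types $(g_1,n_1+1)$ and $(g_2,n_2+1)$ with $g_1+g_2=g$, when the loop separates. I would define the twisted count as
\be
C^{\Omega}_{g,n}(\mu_1,\ldots,\mu_n)(v_1,\ldots,v_n)=\sum_{\gamma}\frac{1}{|\mathrm{Aut}(\gamma)|}\,\Omega(\gamma)(v_1,\ldots,v_n),
\ee
where vertex $i$ of $\gamma$ is colored red and carries the input $v_i\in A$, and $m=0$. The goal is to show that $C^{\Omega}$ satisfies the same recursion, with the coefficients replaced by multilinear maps built from the product, the coproduct $\delta$, the counit $\eps$ and $\bige$. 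This recursion is the explicit formula recorded in Theorem \ref{thm:coloredcatalan}.

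The key steps are as follows. First, by Theorem \ref{prop:1.2} each summand equals $\eps(\bige^g(v_1\cdots v_n))$, so it depends only on the type and factors out of the sum. Hence $C^{\Omega}_{g,n}=C_{g,n}\cdot\eps(\bige^g(v_1\cdots v_n))$. Second, check each branch of the recursion against the Edge Contraction Axioms.
\begin{enumerate}
\item A straight edge joining vertices $1$ and $j$ corresponds to multiplication, giving $\Omega_{g,n-1}(v_1v_j,\ldots)$.
\item A non-separating loop corresponds to the coproduct, giving $\Omega_{g-1,n+1}(\delta(v_1),v_2,\ldots)$. This works because $\eps\bigl(\bige^{g-1}(\mu\circ\delta(v_1)\,v_2\cdots v_n)\bigr)=\eps(\bige^g(v_1\cdots v_n))$, since $\bige=\mu\circ\delta$ and $\bige$ is a module map.
\item A separating loop corresponds to applying $\delta$ and distributing the remaining inputs $v_I$ and $v_J$ across the two pieces. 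Here one needs $\sum\eps(\bige^{g_1}(x_{(1)}v_I))\,\eps(\bige^{g_2}(x_{(2)}v_J))=\eps(\bige^{g}(x\,v_Iv_J))$.
\end{enumerate}
Third, substitute these identities into the Catalan recursion and multiply through by the common factor, which yields the twisted recursion.

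I expect the separating-loop identity in step (3) to be the main obstacle. It requires counitality together with coassociativity and the Frobenius compatibility of $\delta$ with the product. The Frobenius relation is what lets one move $v_I$, $v_J$ and the powers of $\bige$ across $\delta$. Without a unit, one cannot write $v_I$ as $1\cdot v_I$. I would first try to derive the identity directly from the Edge Contraction Axioms by interpreting it as contracting a separating loop in a colored graph, using the graph independence that Theorem \ref{prop:1.2} establishes. Only if that fails would I verify it algebraically via $\delta(xy)=x\delta(y)=\delta(x)y$.

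The degenerate base cases also need care: the $(0,1)$ graphs (trees) and the disc-bounding loops of Remark \ref{rmk:loopremoval}. In these the twisted factor is $\eps(v_1)$, so they consistently match the initial condition $C_{0,1}(\mu)$.
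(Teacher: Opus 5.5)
Your proposal is correct and follows essentially the paper's argument. You weight each arrowed cell graph by its graph-independent value $\eps(\bige^g(v_1\cdots v_n))$, so the left side becomes $C_{g,n}(\vec\mu)\,\omega_{g,n}(v_1,\ldots,v_n)$, and then match the three contraction branches of \eqref{eqn:catalanrecursion} with the Edge Contraction Axioms. Where the paper only appeals to ``functoriality'', you verify the three identities explicitly, and your Sweedler form of the separating term (using $(\eps\tensor\eps)\circ\delta=\eps$) suits a possibly infinite-dimensional counital nearly Frobenius algebra better than the paper's $\eta$-basis expansion.

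The one caveat is bookkeeping. Over arrowed graphs the factor $1/|\mathrm{Aut}(\gamma)|$ equals $1$. What your argument actually produces is \eqref{eqn:catalanrecursion} with coefficients $\mu_j$, no $\alpha\beta$ factors, and $\omega_{g,n-1}$ in the first sum. That is the correct twisted recursion, but it is not literally the display \eqref{twistedtqft}, whose coefficients would force $C_{0,1}(2)\,\eps(v_1)=0$, so you should not claim the two formulas coincide.
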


\noindent Note that if $A$ is a finite-dimensional Frobenius algebra, then it is also a counital nearly Frobenius algebra. In particular, Theorem \ref{prop:1.2}  extends the result obtained in \cite{DM_ribbon} to a result for counital nearly Frobenius algebras.


To motivate our interest in Theorem \ref{thm:nearlycatalantwist}, we consider the following example. For $G$ a finite group, let $A=Z(C[G])$ be the center of the group algebra as a Frobenius algebra representing the orbifold cohomology of  the classifying stack of $G$, $BG$. If $G$ is a finite group, then $A$ is finite dimensional with a basis indexed over conjugacy classes of $G$. It was proved in \cite{Serrano, JK} that the recursion relations of decorated dessins and twisted Catalan numbers via the 2D TQFT corresponding to the Frobenius algebra $A=H^*(BG)$ give information on the cotangent class intersection numbers on the moduli space 
$\overline{\mathcal{M}_{g,n}}(BG)$, via the methods exposed in Section \ref{Catalan}. 
This observation was the starting point for the work started in  \cite{DM_invitation, DM_ribbon, DMSS} to compute recursive relations of twisted Catalan numbers in the attempt of obtaining information of the intersection theory of more general moduli spaces of stable maps.  We leave it as an open question whether the result of Theorem \ref{thm:nearlycatalantwist} can be applied to topological invariants similar to the results of the twisted recursion obtained in \cite{DM_invitation}. We end the Introduction  by the following observation:
\begin{Cor}
	If $A$ is a finite-dimensional Frobenius algebra then Theorem \ref{thm B} and Theorem \ref{prop:1.2} complete the classification of \cite{DM_ribbon} for $m\geq 1$. 
\end{Cor}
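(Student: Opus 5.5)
The plan is to reduce the Corollary to the observation, made just after Theorem \ref{thm:nearlycatalantwist}, that a finite-dimensional Frobenius algebra $A$ is in particular a counital nearly Frobenius algebra. Then Theorems \ref{thm B} and \ref{prop:1.2} apply directly. We then check that the resulting formulas specialize to those of \cite{DM_ribbon} where the two overlap, and extend them to colored cell graphs with $m\geq 1$ output (blue) boundaries.

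\emph{Step 1: $A$ is counital nearly Frobenius.} Let $A$ have multiplication $\mu$, unit $1$ and nondegenerate invariant pairing $\eta(a,b)=\eps(ab)$. Fix a basis $\{e_i\}$ with dual basis $\{e^i\}$ for $\eta$. Define the coproduct $\delta(a)=\sum_i a e_i\tensor e^i$. We check the Frobenius relation $(\mu\tensor\id)(\id\tensor\delta)=\delta\circ\mu=(\id\tensor\mu)(\delta\tensor\id)$; this is standard, since $\sum_i e_i\tensor e^i$ is a Casimir element and so commutes with the $A$-action. The counit identities $(\eps\tensor\id)\delta=\id=(\id\tensor\eps)\delta$ follow from nondegeneracy. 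Hence $(A,\mu,\delta,\eps)$ is counital nearly Frobenius. Its Euler map is $\bige(a)=\mu\delta(a)=a\cdot\bige(1)$, which is exactly the Euler element used in \cite{DM_ribbon}.

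\emph{Step 2: compatibility with \cite{DM_ribbon}.} For $m=0$, the axioms of Definition \ref{SecRedAxioms} coincide with the \cite{DM_ribbon} axioms, as noted after Theorem \ref{prop:1.2}. The extra disc-bounding loop removal, Case (4), is implied by the unit (Lemma \ref{lem:redloopremoval}). So Theorem \ref{prop:1.2} returns $\eps(\bige^g(v_1\cdots v_n))$, which is the \cite{DM_ribbon} answer. For $m\geq1$, Theorem \ref{thm B} gives $\delta^{m-1}(\bige^g(v_1\cdots v_n))$, a formula not covered in \cite{DM_ribbon}. Moreover, applying $\eps^{\tensor m}$ to it recovers the $m=0$ value, by the counit property and the Frobenius relation. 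This shows the classifications are coherent, so together they complete the classification for all $(g,n,m)$ with $n\geq1$.

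The main obstacle is Step 1's identification of the Euler map. We must verify that the $\bige$ defined abstractly in the nearly Frobenius setting agrees with multiplication by $\sum_i e_ie^i$. We must also verify that the $\delta$ implicit in the colored axioms is the $\delta$ induced by $\eta$, and not some other nearly Frobenius coproduct on $A$. I would handle the second point by noting that the counital nearly Frobenius coproduct compatible with the given $\eps$ is unique: $\delta(a)=\delta(a\cdot1)=a\,\delta(1)$, and $\delta(1)$ is forced by the counit axiom together with nondegeneracy.
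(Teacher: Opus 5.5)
Your proposal is correct and takes essentially the paper's route. The paper's only justification is that a finite-dimensional Frobenius algebra is in particular a (counital) nearly Frobenius algebra whose Euler map is multiplication by the Euler element, $\bige^g(v)=\eul^g v$, so Theorems \ref{thm B} and \ref{prop:1.2} give $\delta^{m-1}(\eul^g v_1\cdots v_n)$ for $m\geq 1$ and recover the \cite{DM_ribbon} value $\eps(\eul^g v_1\cdots v_n)$ for $m=0$. Your extra checks (the counit identities, uniqueness of the coproduct compatible with $\eps$, and $\eps^{\tensor m}\circ\delta^{m-1}=\eps$, which needs only the counit axiom and coassociativity rather than the Frobenius relation) are sound and make explicit what the paper leaves implicit.
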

\noindent Indeed, notice that in the case that $A$ is a finite-dimentional Frobenius algebra (and therefore also a nearly Frobenius algebra) the Euler map is just multiplication by the $g$th power of the Euler element, $\bige^g(v)=e^gv$.

At the end of this work we further introduce {\it colored Catalan numbers} and we leave it as an open question whether they satisfy a recursion relation that can be twisted by Almost TQFTs for nearly Frobenius algebras via the use of the Theorem \ref{thm:coloredcatalan}.

Lastly, we would like to mention interest in recent developments of Garoufalidis, Scholze, Wheeler and Zagier \cite{GSWZ} in the theory of Habiro rings of a number field and previous results of \cite{GZ}. In particular, the spectral curve of the Catalan numbers was recovered from the asymptotic expansions of Nahm sums, solutions to q-difference equations. Investigations regarding comparisons of q-difference equations and asymptotic expansions of Nahm sums versus the quantum curve of Catalan numbers and its asymptotic expansions are further topics of interest.

\subsection*{Acknowledgements} The authors would like to express their graditude to the Max Planck Institute for Mathematics, Bonn for their generosity, hospitality and stimulating environment during their long stay in 2023, when this project was initiated. They would also like to express their gratitude to the IHES Bures sur Yvette and RIMS Kyoto for the 2024 visits.  They would also like to thank  David Rose and  Luuk Stehouwer for useful suggestion and multiple discussions related to this project at UNC Chapel Hill and MPIM. The authors research was partially supported by the NSF-FRG DMS 2152130 grant and the UNC JFDA Award 2022.

 During their stay in MPIM, Bonn the second author gave a mini-course at University of Oxford, where some preliminaries ideas of this project were presented; this work is dedicated to the Oxford Lectures Volume. In the Oxford lectures the author also talked about the orbifold Hurwitz numbers via ribbon graphs,  Mirzakhani's work on Weil Petersson volumes and their comparison with the Catalan numbers; the Cohomological Field Theory approach via ribbon graphs, that motivated this work.

 Shortly before String Math 2016, Maryam Mirzakhani expressed an interest in the broad landscape of topological recursion, and we suspect that the recursion of the Catalan numbers is the problem in our research in enumerative geometry closest to her work. We dedicate this contribution to her memory.

\section{Enumerative Geometry of Catalan numbers}\label{Catalan}
We expose this Section based on the results obtained in \cite{DMSS}, \cite{DM_invitation}. In \cite{DMSS} the authors presented the enumerative geometry that we expose in this Section from the point of view of weighted count of clean Belyi morphisms, that is closely related to the lattice point count of Norbury and Do of \cite{Norbury}. We consider the Catalan recursion to offer an easier apporach to the enumerative geometry of moduli spaces of curves that we will further use in Theorem \ref{thm:coloredcatalan}.

Throughout the paper, we will be using the theory of cell graphs,  ie the Poincar\'e dual to ribbon graphs, first introduced by 't Hooft \cite{Hooft} in physics and Grothendieck \cite{Grothendieck} in mathematics. This theory was used by Kontsevich \cite{Kontsevich_intersection} to give a first approach to the computation of Gromov-Witten invariants. 
\begin{Def}
    (Cell graph) A \textbf{cell graph} of topological type $(g,n)$ is the 1-skeleton (the union of 0-cells and 1-cells) of a compact oriented topological surface of genus $g$ with $n$ marked 0-cells. We refer to a 0-cell as a \textit{vertex}, a 1-cell as an \textit{edge}, and a 2-cell as a \textit{face}. Each edge is divided into two \textit{half-edges} connected at the midpoint of each edge. The set $\Gamma_{g,n}$ denotes the set of all cell graphs of topological type $(g,n)$.
\end{Def}
\begin{Rmk}
    Cell graphs are the dual notion of what are known as \textit{ribbon graphs} or \textit{fatgraphs}, introduced by Grothendieck \cite{Grothendieck} as \textit{dessin d'enfants}. A ribbon graph is a graph with an assigned cyclic order to the incident half-edges at every vertex. This ordering then induces a cyclic order to incident half-edges at every face. In this way, any cell graph is also a ribbon graph, though ribbon graphs are typically labeled by their faces and cell graphs are labeled by their vertices.
\end{Rmk}
\begin{Rmk}
Because of the previous remark, we can view a cell graph as three pieces of information: a list of vertices, a list of edges, and at each vertex a cyclic order of the half-edges incident to that vertex. We then consider embedding this graph onto a compact oriented topological surface where the $n$ vertices correspond to $n$ marked discs cut out of the surface. We embed the graph onto the surface in a way such that at each vertex, the specified cyclic order of its half-edges appears clockwise on the surface. In this way, the topological genus $g$ is the smallest non-negative integer such that this graph can be embedded on a surface of genus $g$ without any edges crossing.
\end{Rmk}

Consider the set $\Gamma_{g,n}(\vec{\mu})$ to be the set of all uncolored cell graphs of type $(g,n)$ with specified degrees\\ $\vec{\mu}=(\mu_1,\ldots,\mu_n)$ on the vertex set. If we attempt to answer enumerative problems with this set regarding the size of it, we run into some trouble of overcounting where a cell graph may have some non-trivial automorphisms with itself in a way that fixes edges and vertices but permutes faces. We could account for this overcounting by automorphisms, or we could avoid the automorphism issue altogether by instead considering \textit{arrowed cell graphs}. An arrowed cell graph is a cell graph in which at each vertex we mark one of the half-edges at that vertex and consider that half-edge to be the starting point of that cyclic order. Let $\widehat{\Gamma}_{g,n}(\vec{\mu})$ be the set of arrowed cell graphs of type $(g,n)$ with specified vertex degrees given by $\vec{\mu}$. Now consider
$
C_{g,n}(\vec{\mu}):=|\widehat{\Gamma}_{g,n}(\vec{\mu})|$ to be its cardinality. 
It is easy to observe that $C_{0,1}(2k)$ numbered of arrowed cell graphs on a Riemann sphere with one vertex has even index at the vertex since edges can only be loops attached to the vertex. An arrowed cell graph of type $(0,1)$ further corresponds to a unique pair of $k$ parenthesis, as we start placing parentheses at the arrow  following the orientation of the surface

These numbers $C_{g,n}(\vec{\mu})$ give a generalization of the classical Catalan numbers since
\be\label{cat}
C_{0,1}(2k)=\dfrac{1}{k+1}\dbinom{2k}{k}=C_k
\ee
\noindent is the $k$-th traditional Catalan number.

The numbers $C_{g,n}(\vec{\mu})$ then satisfy a recursion relation similar to the \textit{Cut and Join} which recover \ref{cat recursion} for the case where $g=0$ and $n=1$. We will simplify the notation for the Catalan recursion and use this simplified version later on in this work. Note here that the notation $\vec{u}_{[n]\setminus \{1,j\}}=(\widehat{u}_1,u_2,\ldots,\widehat{u}_j,\ldots,u_n)$ means we omit both the first and $j$th components of the vector $\vec{u}$ and $C_{g,n-1}(\mu_1+\mu_j-2,\vec{\mu}_{[n]\setminus\{1,j\}})=C_{g,n-1}(\mu_1+\mu_j-2,\mu_2,\ldots, \widehat{\mu_j}, \ldots, \mu_n)$ means we replace the first component of $\vec{\mu}$ with $\mu_1+\mu_j-2$ and omit the $j$th component. We obtained the following recursion formula of the generalized Catalan numbers \cite{DMSS,DM_invitation} 
\be\label{eqn:catalanrecursion}
\begin{aligned}
	C_{g,n}(\vec{\mu})=&\sum_{j=2}^n \mu_jC_{g,n-1}(\mu_1+\mu_j-2,\vec{\mu}_{[n]\setminus\{1,j\}})\\&+\sum_{\alpha+\beta=\mu_1-2}\Biggl[C_{g-1,n+1}(\alpha,\beta,\vec{\mu}_{[n]\setminus1})+\sum_{\substack{g_1+g_2=g\\I\sqcup J=\{2,\ldots,n\}}}C_{g_1,|I|+1}(\alpha,\vec{\mu_I})C_{g_2,|J|+1}(\beta,\vec{\mu_J})\Biggr],
\end{aligned}
\ee
and it was shown that this equation can be used to compute Gromov-Witten invariants of a point via a change of coordinates. Each term in this expression corresponds to contracting the marked edge at the first vertex to result in a different cell graph. The first term corresponds to contracting a straight edge connecting vertices $1$ and $j$, the second term corresponds to contracting a loop at vertex $1$ that leaves the graph connected, and the third term corresponds to contracting a loop at vertex $1$ that disconnects the graph.\\

Restricting the recursion of generalized Catalan numbers, Equation \ref{eqn:catalanrecursion}, to the genus 0 and 1 marked point case, via the observation of Equation \ref{cat}, we obtain the celebrated recursion that the classical Catalan numbers satisfy
\be\label{cat recursion}
C_k=\sum_{a+b=k-1} C_a \cdot C_b
\ee

It is an undergraduate exercise in discrete mathematics textbook to show that
the recursion relation of classical Catalan numbers $C_k$, \ref{cat recursion}, is equivalent to the equation of an algebraic curve. Indeed, taking the generating function of these numbers $z(x)=\sum_{m=0}^{\infty} C_{m}x^{-2m-1}$ 
 and using the classical recursion\ref{cat recursion} we obtain the {\it spectral curve of the Catalan numbers}
\be\label{cat curve}
z^2-xz+1=0
\ee
 that is fundamental in the topological recursion formalism.

Consider the generating function for the generalized Catalan numbers is defined as

\be
\label{F}
F_{g,n}(x_1,\dots,x_n)
:= \sum_{\mu_1,\dots,\mu_n>0}\frac{C_{g,n}(\mu_1,\dots,\mu_n)}
{\mu_1\cdots\mu_n}x_1^{-\mu_1}\cdots 
x_n^{-\mu_n}.
\ee

The spectral curve of the Catalan numbers \eqref{cat curve} is seen as a local expression on one coordinate chart of a singular curve inside the Hirzebruch surface $\mathbb{F}_2$. As in \cite{DMSS} we perform  a change of coordinates that corresponds to the normalization coordinate of the blown-up curve 

\be\label{xt}
x = x(t) = \frac{t+1}{t-1}+\frac{t-1}{t+1},
\ee
For each $(g,n)$, in the stable range $2g-2+n>0$, then $F_{g,n}\big(x(t_1),x(t_2),\dots,x(t_n)\big)
$ becomes a Laurent polynomial. 

There exists a differential operator
that annihilates the generating function of the free energies of the generalized Catalan numbers. We call it {\it the quantum curve of Catalan numbers}. The spectral curve used in the Topological Recursion formalism for the Catalan recursion, \eqref{cat curve} gives rise to its quantum curve $$\hbar \frac{d^2}{dx^2}+ \hbar x\frac{d}{dx} +1$$ where the Planck constant $\hbar$ is a deformation parameter.  In particular, the  quantization for the Catalan numbers was proved by Mulase and Sulkowski following a conjecture of Gukov and Sulkowski gives the following:

\begin{Thm} \label{thm MS} Let $\hbar$
	be a the Planck constant. Then
	\begin{equation}\label{quantum curve}
	\left(\hbar^2 \cdot \frac{d^2}{dx^2}+\hbar \cdot x \cdot \frac{d}{dx}+1\right) 
	\exp\left(\sum_{2g-2+n\ge -1}\frac{1}{n!}\cdot\hbar^{2g-2+n}\cdot F_{g,n}(t,\dots,t)
	\right)=0.
	\end{equation}
\end{Thm}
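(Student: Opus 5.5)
The plan is to follow the route of Mulase and Sulkowski and of \cite{DMSS}. We convert the Catalan recursion \eqref{eqn:catalanrecursion} into a partial differential equation for the generating functions $F_{g,n}$ in \eqref{F}, then restrict to the principal specialization $x_1=\cdots=x_n=x$. The statement is an identity of formal series in $\hbar$. Write $\Psi=\exp\big(\sum_{2g-2+n\ge -1}\frac{1}{n!}\hbar^{2g-2+n}F_{g,n}(x,\dots,x)\big)$ and $S(x,\hbar)=\log\Psi$. Then
\[
\Psi^{-1}\Big(\hbar^2\frac{d^2}{dx^2}+\hbar x\frac{d}{dx}+1\Big)\Psi=\hbar^2\big(S''+(S')^2\big)+\hbar xS'+1 .
\]
It therefore suffices to prove that the coefficient of each power $\hbar^k$ in this expression vanishes. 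This reduction is the first step.

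Second, I would handle the lowest orders. At order $\hbar^0$ only $F_{0,1}$ contributes. We have $\frac{d}{dx}F_{0,1}=-\sum_\mu C_{0,1}(\mu)x^{-\mu-1}=-z(x)$, using that $C_{0,1}$ vanishes at odd arguments and the generating function $z$. The vanishing at this order is then exactly the spectral curve \eqref{cat curve}, $z^2-xz+1=0$, which follows from \eqref{cat recursion}. At order $\hbar^1$ the term $F_{0,2}$ enters. Here I would use the known closed form of $F_{0,2}$ in the $t$-coordinate \eqref{xt}, or equivalently the $(g,n)=(0,2)$ case of \eqref{eqn:catalanrecursion}, and check the identity directly as a Laurent polynomial computation in $t$.

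Third, I would treat the stable range $2g-2+n>0$. Multiply \eqref{eqn:catalanrecursion} by $\prod x_i^{-\mu_i}/\prod_{i\ge2}\mu_i$ and sum. This gives a differential recursion expressing $\partial_{x_1}F_{g,n}$ through three kinds of terms. The first is a ``join'' term built from $\frac{x_j}{x_1-x_j}$-type kernels applied to $\partial F_{g,n-1}$; it comes from the factor $\mu_j$ and the shift $\mu_1+\mu_j-2$. The second is $\partial_{u_1}\partial_{u_2}F_{g-1,n+1}$ restricted to $u_1=u_2=x_1$. The third consists of products $\partial F_{g_1}\,\partial F_{g_2}$. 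The unstable terms $F_{0,1}$ and $F_{0,2}$ appearing inside the cut terms should be moved to the left-hand side; combined with the curve they produce the operator $x-2z$. Then specialize all $x_i=x$ and sum over $n$ with the weights $\frac{1}{n!}\hbar^{2g-2+n}$. The $n-1$ choices of $j$ together with the $1/n!$ yield the first-derivative term $\hbar xS'$. The $(g-1,n+1)$ term yields $\hbar^2S''$ in the diagonal limit, and the split terms yield $\hbar^2(S')^2$, with symmetry factors matching via binomial coefficients $\binom{n-1}{|I|}$.

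The main obstacle is the diagonal specialization of the join kernel. The difference quotient $\frac{1}{x_1-x_j}(\cdots)$ becomes a derivative when $x_j\to x_1$. One must check that these derivative contributions cancel against the mixed second partials $\partial_{x_1}\partial_{x_j}$ hidden inside $S''$. Such partials arise because $\frac{d^2}{dx^2}$ of a specialized function includes off-diagonal second derivatives. To control this I would work in the $t$-coordinate, where every $F_{g,n}$ with $2g-2+n>0$ is a Laurent polynomial. I would first prove the unspecialized PDE there, then take the limit $t_j\to t_1$ term by term, mirroring the argument of Mulase and Sulkowski.
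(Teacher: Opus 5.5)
The paper gives no proof of this theorem; it only quotes the result from Mulase and Sulkowski. So your outline has to stand on its own. The overall strategy (recursion, then PDE, then diagonal specialization, then weighted sum) is the right one. However, your third step misassigns which term produces which part of the operator, and the cancellation you flag as the main obstacle does not occur. Write $\partial_i=\partial/\partial x_i$ and $|_\Delta$ for restriction to $x_1=\cdots=x_n=x$. Multiplying \eqref{eqn:catalanrecursion} by $x_1^{-\mu_1}\prod_{i\ge2}x_i^{-\mu_i}/\mu_i$ and summing gives
\begin{multline*}
-x_1\partial_1F_{g,n}=\sum_{j=2}^n\frac{G_j(x_1)-G_j(x_j)}{x_1-x_j}+\partial_{u_1}\partial_{u_2}F_{g-1,n+1}(u_1,u_2,\vec{x}_{[n]\setminus 1})\Big|_{u_1=u_2=x_1}\\
+\sum_{\substack{g_1+g_2=g\\ I\sqcup J=\{2,\ldots,n\}}}\partial_1F_{g_1,|I|+1}(x_1,\vec{x}_I)\,\partial_1F_{g_2,|J|+1}(x_1,\vec{x}_J),
\end{multline*}
where $G_j(y)=\partial_1F_{g,n-1}(y,\vec{x}_{[n]\setminus\{1,j\}})$. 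Now multiply by $\hbar^{2g-1+n}/(n-1)!$, restrict to $\Delta$, sum, and use $\frac{d}{dx}F_{g,n}(x,\ldots,x)=n\,\partial_1F_{g,n}|_\Delta$.

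\begin{itemize}
\item The term $\hbar xS'$ comes from the \emph{left} side, not from the join term. The join term is linear in $F_{g,n-1}$, which sits two powers of $\hbar$ below $F_{g,n}$, so it can only feed $\hbar^2S''$.
\item Concretely, the join term's diagonal value is $(n-1)\,\partial_1^2F_{g,n-1}|_\Delta$. After weighting, this is exactly the pure part $\frac{1}{(n-2)!}\partial_1^2F_{g,n-1}|_\Delta$ of $\frac{1}{(n-1)!}\frac{d^2}{dx^2}F_{g,n-1}(x,\ldots,x)$.
\item The $(g-1,n+1)$ term supplies only the mixed part $\frac{1}{(n'-2)!}\partial_1\partial_2F_{g',n'}|_\Delta$ of $\frac{1}{n'!}\frac{d^2}{dx^2}F_{g',n'}(x,\ldots,x)$, with $(g',n')=(g-1,n+1)$. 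So nothing cancels: the join term and this term add up to $\hbar^2S''$.
\item The split terms give $\hbar^2(S')^2$ via $\binom{n-1}{|I|}$, but only if you keep the unstable factors on the right. $(S')^2$ contains the cross terms $\partial F_{0,1}\,\partial_1F_{g,n}$, so moving them left to form $x-2z$ (the topological-recursion normalization) works against you.
\item The diagonal limit is not delicate. In $x^{-1}$-series, $\frac{x_1^{-k}-x_j^{-k}}{x_1-x_j}=-\frac{1}{x_1x_j}\sum_{a+b=k-1}x_1^{-a}x_j^{-b}$ is a polynomial, so the limit is plain substitution. The detour through $t$ and the separate checks at orders $\hbar^0,\hbar^1$ are unnecessary, since the computation is uniform in $(g,n)$.
\end{itemize}

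The one special input is $C_{0,1}(0)=1$, and here your second step needs a correction. With \eqref{F} taken literally ($\mu>0$), $\frac{d}{dx}F_{0,1}=-(z-x^{-1})$, not $-z$. The $\hbar^0$ coefficient then has leading term $1$, so it is not \eqref{cat curve}. You must set
$$F_{0,1}(x)=-\log x+\sum_{\mu>0}\frac{C_{0,1}(\mu)}{\mu}x^{-\mu},$$
so that $-\frac{d}{dx}F_{0,1}=\sum_{\mu\ge0}C_{0,1}(\mu)x^{-\mu-1}=z$. This same series is what enters the cut terms, since contracting a disc-bounding loop produces a degree-$0$ vertex. It also enters the $(0,2)$ join term, where $C_{0,2}(1,1)=C_{0,1}(0)$. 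With this convention, the $(0,1)$ case of the PDE carries an extra constant $1$ from $\mu=0$. That constant is the $+1$ of the operator, and the weighted sum becomes exactly $-\hbar xS'=\hbar^2\big(S''+(S')^2\big)+1$.
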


A surprising discovery was to find the intersection numbers of 
$\overline{\mathcal{M}_{g,n}}$ through the 
asymptotic expansion of solutions of the quantum curve. More precisely, the highest degree part of the Laurent polynomial $F_{g,n}$
is a homogeneous polynomial of degree
$6g-6+3n$
\be
\label{topdegree}
F^{\text{highest}}_{g,n}(
t_1,\dots,t_n)
=
\frac{(-1)^n}{2^{2g-2+n}}
\sum_{\substack{d_1+\dots+d_n\\
		=3g-3+n}}\la \tau_{d_1}\cdots \tau_{d_n}\ra_{g,n}
\prod_{i=1}^n \left(
|2d_i-1|!!\left(\frac{t_i}{2}\right)^{2d_i+1}
\right),
\ee

Here the cotangent class intersection  numbers on the moduli space $\overline{\mathcal{M}_{g,n}}$, denoted by 
\be
\label{intersection}
\la \tau_{d_1}\cdots \tau_{d_n}\ra_{g,n} =
\int_{\Mbar_{g,n}}
\psi_1 ^{d_1}\cdots \psi_{n}^{d_n}
\ee
are the coefficients of  $F^{\text{highest}}_{g,n}(t_1,\dots,t_n)$

These results of \cite{DMSS} further imply that
the derivatives $W_{g,n}=d_1\cdots d_n F_{g,n}$ 
satisfy the topological recursion based on 
the spectral curve \eqref{cat curve}, which is the
semi-classical limit of Equation \eqref{quantum curve}. In this work we will omit the details regarding the Eynard-Orantin formalism of Topological Recursion or WKB analysis, and rather focus on the mathematical aspects of TQFTs.

In the following sections, by exploiting both the theory of colored ribbon graphs  and the classifications of Almost TQFTs found in \cite{DaDu}, we obtain a recursion relation Theorem \ref{thm:coloredcatalan}, extending the standard recursion of \cite{DMSS}. It is a topic of further research to relate these new recursive relations to enumerative geometry via the topological recursion mechanism.


\section{Frobenius structures}\label{Frob section}

\subsection{Frobenius Algebras}\label{secFA}
\noindent Throughout this chapter, we will consider finite-dimensional, unital, commutative Frobenius algebras over a field $K$. We recall the following
\begin{Def}\label{def:FA} A be a {\it finite dimensional algebra} $A$, with a unit over a ground field $K$ equipped with a nondegenerate symmetric bilinear form $ \eta(u,vw)=\eta(uv,w),\quad \text{for\,all\,} u,v,w\in A$ is called a Frobenius algebra. We denote by $m$ its multiplication map $m: A\tensor A\to A$ and the bilinear form $\eta$ its {\it Frobenius form}.
\end{Def}

Let $\{e_1,\ldots, e_n\}$ be a basis for $A$ over the field $K$ and $1\in A$ be the unit of the Frobenius algebra. The Frobenius form $\eta$ induces a symmetric invertible matrix whose entries will be denoted by\\
$$\eta_{m,n}=\eta(e_m, e_n), \quad \eta=[\eta_{mn}] \quad \eta^{-1}=[\eta^{mn}].$$

We will further introduce the counit of the Frobenius algebra $\eps:A\to K$ as $\eps(v)=\eta(1,v)$ for $v\in A$, and $1\in A$ the multiplicative identity. In particular, the Frobenius form can be defined in terms of the counit 

$$\eta(u,w)=\eps(uw),\,\text{ for\,every \,} u,w\in A.$$

For a finite dimensional Frobenius algebras, the Frobenius form $\eta$ induces an isomorphism between $A$ and its dual $A^*$
\be\label{eqn:lambdaiso}
\lambda:A\longrightarrow A^*,\quad \lambda(u)=\eta(u,-).
\ee

The dual of the multiplication, $m^*$, and  the dualities $\lambda$ and $\lambda\tensor \lambda$ induce a unique map, $\delta$ called the comultiplication of $A$, that makes the diagram commutative

\be
	\begin{tikzcd}
		A \arrow[dd, "\lambda"'] \arrow[rr, "\delta", dashed] &  & A\otimes A \arrow[dd, "\lambda\otimes\lambda"] \\
		& \circlearrowleft &                                                \\
		A^* \arrow[rr, "m^*"']                                &  & A^*\otimes A^*                                
	\end{tikzcd}
\ee


On basis elements, the co-multiplication of $A$ can be defined: \be\label{eqn:deltabasis}
\delta(v):=\sum_{k,p,i,j=1}^{n}\eta(v,e_k e_p)\eta^{ki}\eta^{pj}e_i\otimes e_j\ee



\noindent Furthermore, assume that the Frobenius algebra is associative and co-associative in other words that the following relations hold $(m(m(p\otimes q)\otimes r)=m(p\otimes m(q\otimes r))$ and $(\delta\otimes \id)(\delta(w))=(\id\otimes\delta)(\delta(w))$ and for every $p,q,r,w \in A$. Then the maps $m$ and $\delta$ satisfy the ``Frobenius relation'' meaning that for all $u,w \in A$, 

\be\label{eqn:frobrel}\delta(uw)=(\id\tensor m)(\delta(u),w)=(m\tensor\id)(u,\delta(w)).\ee 

To see this observe first that any vector of a Frobenius algebra can be written in terms of a basis by the following expression

\be\label{eqn:canonbasis}
v=\sum_{a,b}\eta(v,e_a)\eta^{ab}e_b=\sum_{a,b}\eta(e_a,v)\eta^{ba}e_b.
\ee

Further, provided that $A$ is commutative and cocommutative, then
\begin{align*}
    \delta\circ m(u\tensor v)&=\delta(uv)\\
    &=\sum_{i,j,a,b}\eta(uv,e_ie_j)\eta^{ia}\eta^{jb}e_a\otimes e_b\\
    &=\sum_{i,j,a,b}\eta(ue_i,ve_j)\eta^{ia}\eta^{jb}e_a\otimes e_b\\
    &=\sum_{i,j,a,b,c,d}\eta(ue_i,e_c)\eta^{cd}\eta(e_d,ve_j)\eta^{ia}\eta^{jb}e_a\otimes e_b\\
    &=\sum_{i,j,a,b,c,d}\eta(e_i,ue_c)\eta^{cd}\eta(e_de_j,v)\eta^{ia}\eta^{jb}e_a\otimes e_b\\
    &=\sum_{j,b,c,d}\eta^{cd}ue_c\eta(e_de_j,v)\eta^{jb}\otimes e_b\\
    &=\sum_{j,b,c,d}\eta^{cd}\eta^{jb}\eta(v,e_de_j)ue_c\otimes e_b\\
    &=(m\otimes \id)(u\otimes \delta(v)).
\end{align*}Similarly, we have that
\begin{align*}
    \delta\circ m(u\tensor v)&=\delta(uv)\\
    &=\sum_{i,j,a,b}\eta(uv,e_ie_j)\eta^{ia}\eta^{jb}e_a\otimes e_b\\
    &=\sum_{i,j,a,b}\eta(ue_i,ve_j)\eta^{ia}\eta^{jb}e_a\otimes e_b\\
    &=\sum_{i,j,a,b,c,d}\eta(ue_i,e_c)\eta^{cd}\eta(e_d,ve_j)\eta^{ia}\eta^{jb}e_a\otimes e_b\\
    &=\sum_{i,j,a,b,c,d}\eta(u,e_ie_c)\eta^{cd}\eta(e_dv,e_j)\eta^{ia}\eta^{jb}e_a\otimes e_b\\
    &=\sum_{i,j,a,b,c,d}\eta(u,e_ie_c)\eta^{cd}\eta^{ia}e_a\otimes \eta(e_dv,e_j)\eta^{jb}e_b\\
    &=\sum_{i,a,c,d}\eta(u,e_ie_c)\eta^{cd}\eta^{ia}e_a\otimes (e_dv)\\
    &=(\id\otimes m)(\delta(u)\otimes v).
\end{align*}

As seen above, the composition $\delta\circ m:A\tensor A\longrightarrow A\tensor A$ plays an important role in the definition of a Frobenius algebra. This composition in the other order $m\circ \delta:A\longrightarrow A$ is important in the development of 2D TQFD.

The \textbf{Euler element} of a Frobenius algebra, denoted by $\eul\in A$, is defined by
\be
\eul = m\circ\delta(1).
\ee 
We can express the Euler element in terms of a basis
\be\label{eqn:eulbasis}
\eul = \sum_{a,b}\eta^{ab}e_ae_b.
\ee
    This element allows us to extend our understanding of 2D TQFT to those linear maps associated to surfaces cell graphs of positive genus.

\begin{Rmk}
    We also remark that for any vector $v$, then
    \be
    m\circ \delta(v)=\eul v.
    \ee
    This follows directly from \ref{eqn:deltabasis} and \ref{eqn:eulbasis}, we leave the proof to the reader.
\end{Rmk}

\begin{Ex}
    Consider the finite-dimensional polynomial algebra over $K$, $A=K[x]/(x^{n+1})$ with basis $\{1,x,x^2,\ldots,x^n\}$. Then $A$ may be endowed with many comultiplicative structures. Gonzalez et al. \cite{GLSU} showed that all possible coproducts on $A$ must be linear combinations of coproducts which take the form
    \be
    \delta_k(x^\ell):=\sum_{i+j=n+k+\ell}x^i \tensor x^j,\quad\mathrm{\,for\,} k\in\{0,1,\ldots,n\}.
    \ee
    We notice that for $k=0$, this $\delta_0$ is exactly the coproduct in Example 2.4 from \cite{DaDu} of the group algebra of the finite cyclic group of order $n+1$ generated by the element $x$, where the coproduct is induced by the counit $\eps:A\to K$ given by $\eps(1)=1$ and $\eps(x^\ell)=0$ for $\ell\neq 0$. The rest of these comultiplicative structures, however, do not arise from the counit of some Frobenius algebra, otherwise the Nearly Frobenius structure would not satisfy the Frobenius relation.
\end{Ex}

\subsection{Nearly Frobenius Algebras}
Any Frobenius algebra with both a unit and counit that has an associative multiplicative structure and coassociative comultiplicative structure is forced to be finite-dimensional. However, many results and classifications of Frobenius algebras may be interesting when extended to algebras that may be infinite dimensional but still maintain much of the same Frobenius structure. Because of this limitation, these new structures must either lack a unit or a counit (or potentially both). Our previous discussion of Frobenius algebras in \ref{secFA} showed how to define a Frobenius algebra based on its counit $\eps$, but Abrams \cite{Abrams} gave another formulation of Frobenius algebras based instead on the coproduct $\delta$. In fact, Abrams further showed that these two constructions are equivalent; any Frobenius algebra defined by its counit can equivalently be defined by its coproduct and vice-versa. Gonzalez et al. \cite{GLSU} used this construction via the coproduct to generalize the structure of Frobenius algebras to these similar algebras which may lack a counit.

\begin{Def}
    (Nearly Frobenius Algebra) Let $A$ be an algebra over a field $K$ endowed with a structure of multiplication $m$ and comultiplication $\delta$ where the product and coproduct maps satisfy the Frobenius relation \ref{eqn:frobrel}. Then, the collection of data $(A,m,\delta)$ is known as a \textbf{Nearly Frobenius algebra}.
\end{Def}
\begin{Rmk}
	We recall that the definition \ref{def:FA} requires a Frobenius algebra to be both unital and counital, but for a Nearly Frobenius algebra, there is neither such requirement.
\end{Rmk}

\begin{Def}
	For a Nearly Frobenius algebra, the \textbf{Euler Map,} $\mathbf{E}$ is defined as
	\be \bige:=m\circ\delta:A\longrightarrow A.\ee
\end{Def}

\noindent We recall that for a Frobenius algebra, the Euler element was defined as $\eul=(m\circ\delta)(1)$, and similarly $(m\circ\delta)(v)=\eul v$. However, for a Nearly Frobenius algebras without a unit, we cannot define $\eul$ as an element of the algebra. Therefore, this notion of the Euler map allows us to use a similarly useful construction in a new setting.
\begin{Rmk}
	For a Frobenius algebra, the Euler map is simply multiplication by the Euler element $e$.  $$\bige(v)=ev=m\circ \delta(v).$$
\end{Rmk}
The Euler map can be written in terms of a basis for $A$, 
\be\label{eqn:EULbasis}
\bige(v)=\sum_{i,j}\eta^{ij}ve_ie_j.
\ee
We then use this Euler map to classify values of the Almost TQFT for higher genus.

\begin{Ex}
	Any Frobenius algebra must also be a Nearly Frobenius algebra. The coproduct $\delta$ which arises as the dual of the multiplication map $m$ also serves as the coproduct of the Nearly Frobenius algebra structure.
\end{Ex}
\noindent It is, however, not necessarily the case that any Nearly Frobenius algebra arises from a Frobenius algebra structure. In fact, even in the case that a Nearly Frobenius algebra is finite-dimensional, it is not essential that it is also a Frobenius algebra. One such example of this phenomenon arises from the fact that even though a Frobenius algebra must have a unique counit, a Nearly Frobenius algebra may have many potential coproducts which satisfy the necessary comultiplicative structure, but do not arise as the dual of a product map under this unique counit.

\begin{Ex}\label{ex:finiteNFA}
    Consider the finite-dimensional polynomial algebra over $K$, $A=K[x]/(x^{n+1})$ with basis $\{1,x,x^2,\ldots,x^n\}$. Then $A$ may be endowed with many comultiplicative structures. Gonzalez et al. \cite{GLSU} showed that all possible coproducts on $A$ must be linear combinations of coproducts which take the form
    \be
    \delta_k(x^\ell):=\sum_{i+j=n+k+\ell}x^i \tensor x^j,\quad\mathrm{\,for\,} k\in\{0,1,\ldots,n\}.
    \ee
    We notice that for $k=0$, this $\delta_0$ is exactly the coproduct in the group algebra of the finite cyclic group of order $n+1$ generated by the element $x$, where the coproduct is induced by the counit $\eps:A\to K$ given by $\eps(1)=1$ and $\eps(x^\ell)=0$ for $\ell\neq 0$.\\
    \\
    \noindent However, for $k\neq0,$ this defines $(A,\delta_k)$ as a nearly Frobenius algebra without being a Frobenius algebra, despite being finite-dimensional. Indeed, notice that any Frobenius algebra must satisfy the relation on sewing discs that $(\eps\tensor\id)(\delta(v))=v$ for all $v$. In the case that $k\neq 0$, we see that \be
    (\eps\tensor\id)(\delta(x^\ell))=\sum_{i+j=n+k+\ell}\eps(x^i) \tensor x^j\neq x^\ell
    \ee
\end{Ex}
\noindent As we noted before, Frobenius algebras must be finite-dimensional, but Nearly Frobenius algebras can be infinite dimensional.
\begin{Ex}
    Let $A=K[[x,x\inv]]$, be the algebra of formal Laurent series over $K$. Coproducts of this algebraic structure may look like linear combinations of
    
    \be
    \delta_k(x^\ell)=\sum_{i+j=k+\ell}x^i\tensor x^j,
    \ee
    \noindent and each of these defines a Nearly Frobenius algebra which does not arise as a counit of a Frobenius algebra. We can observe that higher powers of these coproducts must take the form 
    \be
    \delta_k^{m}(x^\ell)=\sum_{j_1+j_2+\cdots j_{m-1}=mk+\ell}x^{j_1}\tensor \cdots \tensor x^{j_{m-1}}.
    \ee
\end{Ex}


\section{Topological Quantum Field Theory}\label{sec:TQFT}
\subsection{Background} In the late 1980s, Atiyah \cite{Atiyah} and Segal \cite{Segal} introduced the idea of $n$-dimensional topological quantum field theories to give a mathematical explanation to ideas derived in quantum physics. Here, we focus on the case where $n=2$. The relation between 2 dimensional TQFTs and Frobenius algebras was discovered by Dijkgraaf \cite{Dijkgraaf}, while Moore and Segal \cite{MS} proved an equivalence of categories between the category of finite-dimensional Frobenius algebras and the category of 2-dimensional TQFTs, see also Kock \cite{Kock} and Teleman \cite{Teleman_2011}.
\begin{Def}
	Let \cob denote the category of 2-cobordisms, whose objects are disjoint copies of the circle $S^1$ and whose morphisms are smooth oriented surfaces with boundary. More precisely, a morphism between two objects $N_0=\sqcup_n S^1$ and $N_1=\sqcup_m S^1$ is a Riemann surface $\Sigma_{g,n,m}$ with boundary $\partial\Sigma_{g,n,m}=N_0^{op}\sqcup N_1$ where the boundary components corresponding to $N_0$ are given one orientation and those corresponding to $N_1$ are given the opposite orientation.
\end{Def}  

\noindent We present the morphisms of the category \cob in terms of generators and relations. All cobordisms can be generated via composition and disjoint union by a set of five cobordisms: a cup with one incoming boundary component and no outgoing boundary components, a cup with one outgoing boundary component and no incoming boundary components, a cylinder with one incoming and one outgoing boundary component, a pair of pants with one incoming boundary component and two outgoing boundary components, and a pair of pants with two incoming and one outgoing boundary components. We then present relations that the compositions of these generators must follow.

\begin{figure}[ht]
    \centering
    \includegraphics[width=0.6\linewidth]{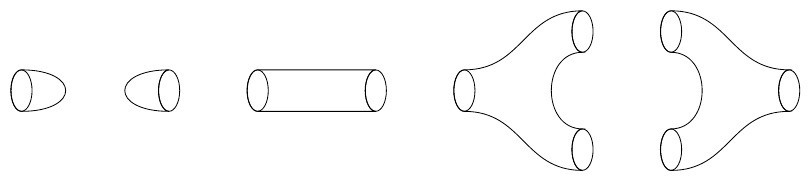}
    \caption{Generating set of morphisms in the category \cob}
    \label{fig:CobGens}
\end{figure}
\begin{Rmk}
    Often, we can consider a sixth generator known as the `twist'; however, this map is not necessary in the generation of all possible cobordisms. Instead, the twist satisfies some important relations. Two of these relations translate to commutativity and cocommutativity when we translate these ideas into the language of Frobenius algebras. The other relations (analogous to the Reidemeister moves in knot theory) involve the composition of the twist map with the other generators and turns \cob into a symmetric monoidal category. For these relations, we refer to \cite{Kock}.
\end{Rmk}
\begin{figure}[ht]
    \centering
    \includegraphics[width=0.4\linewidth]{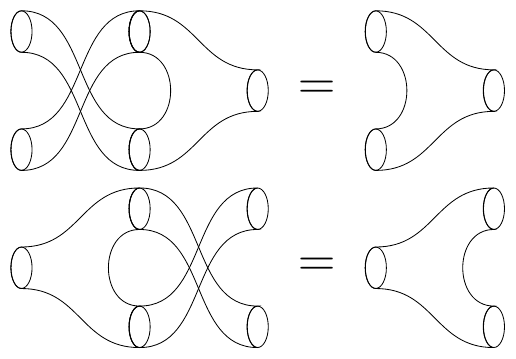}
    \caption{Commutativity and cocommutativity relations of the twist.}
    \label{fig:TwistComRel}
\end{figure}

\noindent Of the remaining relations on the other five generators, the first large family of them regards the fact that the cylinder acts as the identity morphism. As such, gluing a cylinder onto the incoming boundary or outgoing boundary of another generator (or in the case of the pairs of pants, gluing two cylinders onto the two boundary components of the same orientation) leaves that generator unchanged. Further, we introduce two relations regarding sewing discs into boundary components of pairs of pants. In particular, if we sew a disc into an incoming boundary component of a pair of pants with two incoming boundary components (or respectively to an outgoing boundary component of a pair of pants with two outgoing boundary components), then the resulting cobordism is a cylinder. Of course, sewing a disc itself does not look like the composition of cobordisms, so what we really mean is to sew a disc to one incoming (resp. outgoing) boundary component and an identity cylinder to the other.

\begin{figure}[ht]
    \centering
    \includegraphics[width=0.5\linewidth]{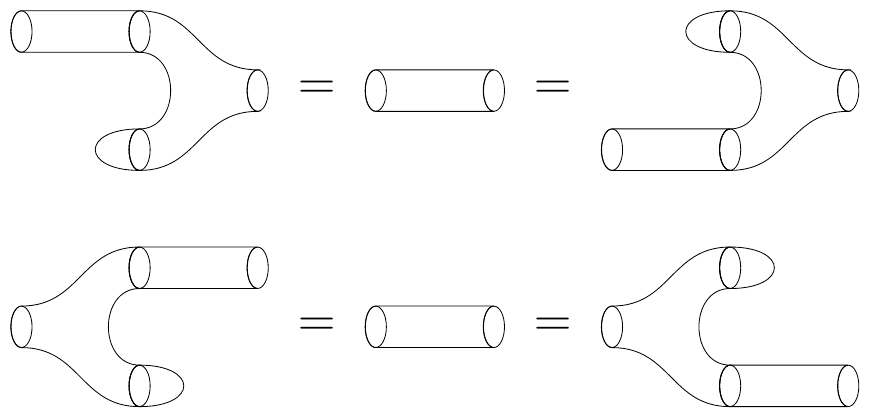}
    \caption{Relations involving sewing discs}
    \label{fig:sewingdiscs}
\end{figure}

\noindent Next, we have the associativity and coassociativity relations which regard the composition of two pairs of pants of the same orientation.

\begin{figure}[ht]
    \centering
    \includegraphics[width=0.3\linewidth]{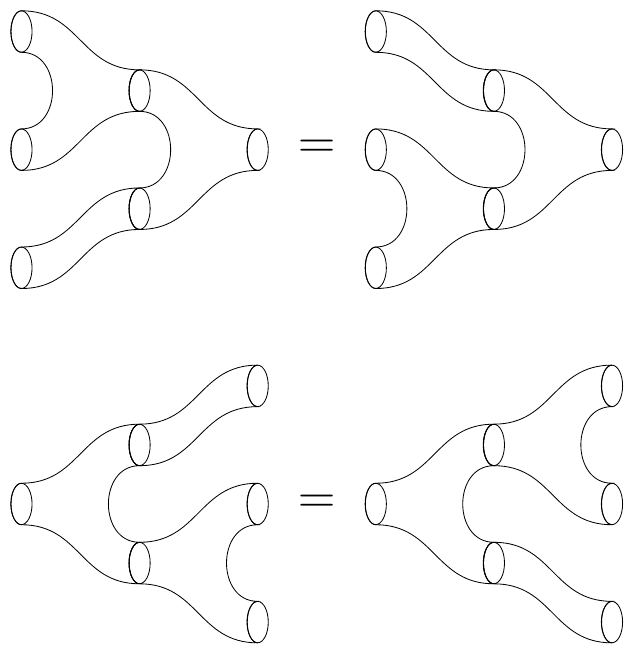}
    \caption{Associativity and coassociativity relations of cobordisms.}
    \label{fig:tqftassoc}
\end{figure}

\noindent Lastly, consider the following diffeomorphic Riemann surfaces of genus 0 with two incoming and two outgoing boundary components, each with different way of composing pairs of pants as opposite orientations. The final relation here, known as the Frobenius relation, dictates that they are also equivalent as cobordisms.

\begin{figure}[ht]
    \centering
    \includegraphics[width=0.6\linewidth]{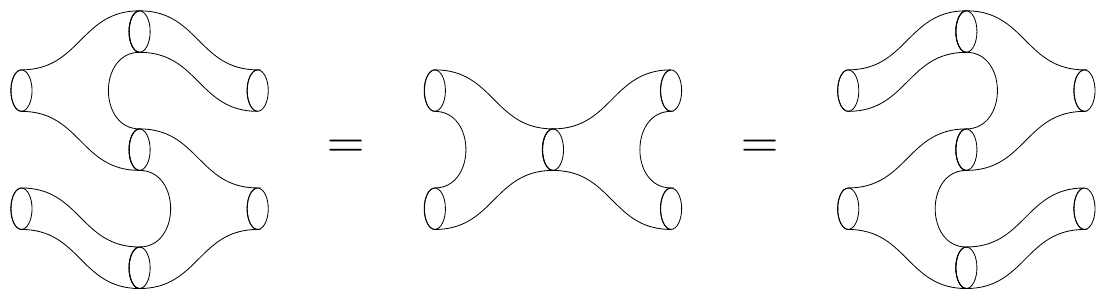}
    \caption{Frobenius relation of cobordisms.}
    \label{fig:FrobRel}
\end{figure}

We further consider the monoidal category of finite-dimensional vector spaces over a ground field $K$ together with a tensor product; we will call this category \textbf{Kvect}. Then, a 2-dimensional topological quantum field theory (2D TQFT) is a symmetric monoidal functor \be
Z:\mathbf{2Cob}\longrightarrow\mathbf{KVect},
\ee
satisfying a particular set of axioms. This functor sends the object in \cob consisting of $n$ copies of the circle to the $n$th tensor power of a particular vector space $A$ over $K$, and sends each Riemann surface $\Sigma_{g,n,m}$ to a multilinear map $\omega_{g,n,m}:\Atn\longrightarrow\Atm$. By functoriality, the relations on the generators of \cob correspond to the fact that this resulting vector space $A$ is necessarily a commutative Frobenius algebra with unit.

One can readily identify that the generators of \cob in Figure \ref{fig:CobGens} each correspond to a useful piece of data about the Frobenius algebra $A$.
\begin{align*}
	\eps&:=\omega_{0,1,0}:A\longrightarrow K,\\
	\mathbf{1}&:=\omega_{0,0,1}:K\longrightarrow A,\\
	\eta&:=\omega_{0,2,0}:A\tensor A\longrightarrow K,\\
	\delta&:=\omega_{0,1,2}:A\longrightarrow A\tensor A,\\
	m&:=\omega_{0,2,1}:A\tensor A \longrightarrow A.
\end{align*}

The two relations of the twist in Figure \ref{fig:TwistComRel} correspond to the commutativity and cocommutativity of $A$. The first relation on sewing discs in Figure \ref{fig:sewingdiscs} just defines $1$ as a multiplicative identity while the second gives the canonical basis expansion of $v$ of Equation \ref{eqn:canonbasis}. The two relations in Figure \ref{fig:tqftassoc} correspond to the associativity and coassociativity of $A$, while the Frobenius relation in Figure \ref{fig:FrobRel} is precisely the equation \ref{eqn:frobrel}.

Atiyah and Segal \cite{Atiyah} define the \textbf{sewing axiom} to express that the composition of morphisms must be respected by the 2D TQFT functor. That is, if a Riemann surface $\Sigma_{g+h+k-1,n,m}$ is built from two surfaces $\Sigma_{g,n,k}$ and $\Sigma_{h,k,m}$ by sewing the $k$ outgoing boundary components of one surface to the $k$ incoming boundary components of the other, then the multilinear map associated to the new surface should be the composition of the maps associated to the original two surfaces. In particular,
\be\label{eqn:sewing}
\omega_{g+h+k-1,n,m}=\omega_{h,k,m}\circ\omega_{g,n,k}:\Atn\longrightarrow\Atm.
\ee
\begin{figure}[ht]
    \centering
    \includegraphics[width=0.3\linewidth]{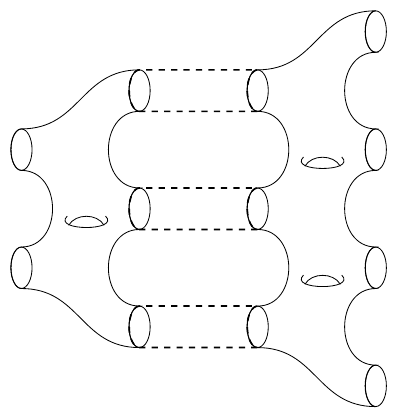}
    \caption{Sewing two cobordisms.}
    \label{fig:sewing}
\end{figure}

\noindent The sewing axiom of \ref{eqn:sewing} can then be generalized into considering an operation in which only some of these common boundary components are sewn together. In particular, consider two Riemann surfaces $\Sigma_{g,n,k}$ and $\Sigma_{h,\ell,m}$. Then for any $j\geq 1$ with $j\leq k$ and $j\leq\ell$, we can sew $j$ of the outgoing boundary components to the first surface to $j$ of the incoming boundary components of the second surface, and identity cylinders to the other boundary components of the same orientations. This results in a new Riemann surface $\Sigma_{g+h+j-1,n+\ell-j,k+m-j}$. The \textbf{partial sewing axiom} then dictates that the map associated to this new surface is also the composition of the multilinear maps associated to $\Sigma_{g,n,k}$ and $\Sigma_{h,\ell,m}$. That is to say, 
\be\label{eqn:partialsewing}
\omega_{g+h+j-1,n+\ell-j,k+m-j}=\omega_{h,\ell,m}\circ\omega_{g,n,k}:A^{\tensor n+\ell-j}\longrightarrow A^{k+m-j}.
\ee

 If we consider the partial sewing axiom for genus 0 surfaces with few boundary components, this allows us to examine relationships between important maps in the Frobenius algebra structure. For instance,
\be
\omega_{0,1,0}\circ \omega_{0,2,1}=\omega_{0,0,2}\Longrightarrow \eps\circ m=\eta.
\ee

\noindent In \cite{Atiyah} one more axiom insures the compatibility regarding duality and the reversal of orientations. In particular, if $Z(S^1)=A$, then the dual  algebra $A^*$ corresponds to $Z((S^1)^{op})$. Furthermore, let $\Sigma$ be a cobordism of type $(g,n,m)$ and $\Sigma^*$ be the surface $\Sigma$ with the opposite orientation. Then $Z(\Sigma^*)$ gives the dual map on dual vector spaces $(A^*)^{\tensor n}\longrightarrow (A^*)^{\tensor m}$. This compatibility regarding orientation gives $A$ a bialgebra structure. The nondegeneracy condition of  the bilinear form $\eta$, guarantees the duality between the algebra and coalgebra structures of $A$. This means that the map $\lambda$ in \ref{eqn:lambdaiso} gives an isomorphism between $A$ and its dual $A^*$, and makes $A$ a commutative Frobenius algebra.

\noindent We can see explicitly that the functoriality of $Z$ on the relations on the generating set in Figure \ref{fig:CobGens} lead to the properties intrinsic to Frobenius algebras. The relations on the twist in Figure \ref{fig:TwistComRel} imply the commutativity and cocommutativity of the algebra and coalgebra structures of $A$. The first relation depicted in Figure \ref{fig:sewingdiscs} gives that $1$ is the multiplicative identity of $A$ while the second relation implies \ref{eqn:canonbasis}. In particular, we can see \begin{align*}
    v=\id(v)&=(\id\tensor \eps)\circ \delta(v)\\
    &= (\id\tensor \eps)\left(\sum_{i,j,a,b}\eta(v,e_ie_j)\eta^{ia}\eta^{jb}e_a\tensor e_b\right)\\
    &=\sum_{i,j,a,b}\eta(v,e_ie_j)\eta^{ia}\eta^{jb}e_a \eps(e_b)\\
    &=\sum_{i,j,a,b}\eta(v,e_i\eps(e_b)\eta^{jb}e_j)\eta^{ia}e_a\\
    &=\sum_{i,a}\eta(v,e_i)e_a\eta^{ia} .
\end{align*}
\noindent The two relations in Figure \ref{fig:tqftassoc} correspond to the associativity and coassociativity of $m$ and $\delta$. Lastly, the Frobenius relation is precisely Figure \ref{fig:FrobRel} that can be expressed as the commutativity of some diagram described by the equation \ref{eqn:frobrel}.
Any Frobenius algebra gives rise to a 2D TQFT. Indeed, starting with the generating maps $1,\eps,\id, m, $ and $\delta$, build all other maps from the partial sewing axiom, subject to a few restrictions.

\begin{Def}[2-Dimensional Topological Quantum Field Theory]\label{tqft definition}
	 Let $A$ be a finite-dimensional vector space over a field $K$. Suppose $A$ is equipped with a non-trivial linear map $\eps:A\longrightarrow K$ and an isomorphism to its dual $\lambda:A\stackrel{\sim}{\longrightarrow} A^*$. A system of multilinear maps $\left(A,\left\{\omega_{g,n,m}\right\}\right)$ is a \textbf{2-dimensional topological quantum field theory} if the system $$\omega_{g,n,m}:\Atn\longrightarrow\Atm,\quad g,n,m\geq0$$satisfy the following four axioms.
    \begin{itemize}
        \item \textbf{TQFT 1. Symmetry:} The map \be
        \omega_{g,n,m}: \Atn\longrightarrow \Atm 
        \ee
        is invariant under the symmetric group action on its domain.
        \item \textbf{TQFT 2. Non-triviality: }\be
        \omega_{0,1,0}:=\eps:A\longrightarrow K.
        \ee

        \item \textbf{TQFT 3. Duality: }The following diagram commutes:
        \be
\begin{tikzcd}
A^{\otimes n} \arrow[dd, "\lambda^{\otimes n}"'] \arrow[rr, "{\omega_{g,n,m}}"] &  & A^{\otimes m} \arrow[dd, "\lambda^{\otimes m}"] \\
                                                                                &  &                                                 \\
(A^*)^{\otimes n} \arrow[rr, "{(\omega_{g,n,m})^*}"]                            &  & (A^*)^{\otimes m}                              
\end{tikzcd}
        \ee

    \item \textbf{TQFT 4. Partial Sewing: }\be
\omega_{h,\ell,m}\circ\omega_{g,n,k}=\omega_{g+h+j-1,n+\ell-j,k+m-j}:A^{\tensor n+\ell-j}\longrightarrow A^{k+m-j},\quad j\leq k,j\leq\ell.
    \ee
    \end{itemize}
\end{Def}

\noindent The maps $\omega_{g,n,m}$ can be classified for any 2D TQFT satisfying these four axioms. The two authors use the techniques of Cohomological Field Theories to give the proof this classification for all maps $\omega_{g,n,m}$ in \cite{DaDu} for $m>0$, extending the previous results obtained in \cite{DM_invitation} for maps of the form $\omega_{g,n,0}$. In this work, we will recall the main techniques used in \cite{DM_invitation} for the proof of the $m=0$ case. 

\noindent A \textbf{Cohomological Field Theory} (CohFT) \cite{KM} is a Frobenius algebra $A$ along with a system of linear maps $\Omega_{g,n}:A\longrightarrow H^*(\mgnbar,K)$ defined for $(g,n)$ with $2-2g+n>0$, and satisfying a particular list of axioms. Here, $\mgnbar$ stands for the Deligne-Mumford compactification of the moduli space of stable genus $g$ curves with $n$ marked points. It is known that the $\left\{\omega_{g,n,0}\right\}$ part of any 2D TQFT gives a CohFT that takes only values in $H^0(\mgnbar,K)=K$, so in the context of the 2D TQFT maps $\left\{\omega_{g,n,0}\right\}$, the CohFT axioms become the following:
\be\label{eqn:cohft1}
\omega_{g,n+1,0}(v_1,\ldots,v_n,1)=\omega_{g,n,0}(v_1,\ldots,v_n).
\ee
\be\label{eqn:cohft2}
\omega_{g,n,0}(v_1,\ldots,v_n)=\sum_{i,j}\omega_{g-1,n+2}(v_1,\ldots,v_n,e_i,e_j)\eta^{ij}.
\ee
\be\label{eqn:cohft3}
\omega_{g_1+g_2,|I|+|J|,0}(v_I,v_J)=\sum_{i,j}\eta^{ij} \omega_{g_1,|I|+1,0}(v_I,e_i)\cdot \omega_{g_2,|J|+1,0}(v_J,e_j).
\ee
where $I\sqcup J=\{1,2,\ldots,n\}$. Indeed, each of these three equations is a direct result of the partial sewing axiom \ref{eqn:partialsewing}. Since the multiplicative identity $1$ is the image of a cap $\Sigma_{0,0,1}$, then the first axiom \ref{eqn:cohft1} is a result of sewing a disc onto the last input boundary component of $\Sigma_{g,n+1,0}$. 
\begin{figure}[ht]
    \centering
    \includegraphics[width=0.2\linewidth]{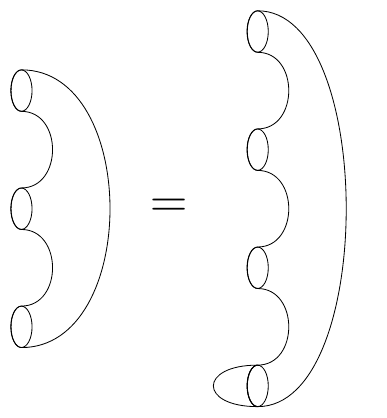}
    \caption{Partial sewing corresponding to the first CohFT axiom.}
    \label{fig:cohft1sewing}
\end{figure}

\noindent That is,
\be
\omega_{g,n+1,0}\circ(\id^{\tensor n}\tensor \omega_{0,0,1})=\omega_{g,n,0}:\Atn\longrightarrow K.
\ee

\noindent The other two CohFT axioms involve sewing on a tube-like shape $\Sigma_{0,0,2}$ associated to the element $\delta(1)=\omega_{0,1,2}\circ\omega_{0,0,1}$. We can see from \ref{eqn:deltabasis} that this is \begin{align*}
    \delta(1) &= \sum_{i,j,a,b}\eta(1,e_ie_j)\eta^{ia}\eta^{jb}e_a\tensor e_b\\
    &=\sum_{i,j,a,b}\eta(e_i,e_j)\eta^{ia}\eta^{jb}e_a\tensor e_b\\
    &=\sum_{i,j,a,b}\eta_{ij}\eta^{ia}\eta^{jb}e_a\tensor e_b\\
    &=\sum_{a,b}\eta^{ab}e_a\tensor e_b.
\end{align*}
\noindent If we sew this onto the the last two input boundary components of $\Sigma_{g-1,n+2,0}$, we create a hole adding 1 to the genus.
\begin{figure}[ht]
    \centering
    \includegraphics[width=0.15\linewidth]{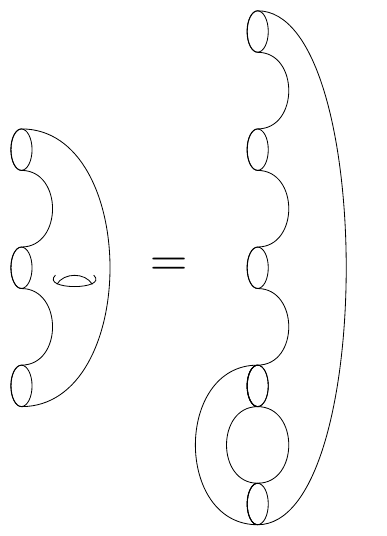}
    \caption{Partial sewing corresponding to the second CohFT axiom.}
    \label{fig:cohft2sewing}
\end{figure}

\noindent This gives the corresponding composition of linear maps:
\be
\omega_{g-1,n+2,0}\circ(\id^{\tensor n}\tensor \omega_{0,0,2}) = \omega_{g,n,0}:\Atn\longrightarrow K.
\ee

\noindent The third axiom also involves sewing on the same tube shape $\Sigma_{0,0,2}$, but instead sews one boundary comopnent each onto two disjoint cobordisms $\Sigma_{g_1,|I|+1,0}$ and $\Sigma_{g_2,|J|+1,0}$. Since the associated linear maps are invariant under the symmetric group action on their inputs, we will depict this as sewing onto the first input slot of one cobordism and the last input slot of the other cobordism, even though the formula \ref{eqn:cohft3} suggests we should sew these circles onto the last input slot of each.

\begin{figure}[ht]
    \centering
    \includegraphics[width=0.2\linewidth]{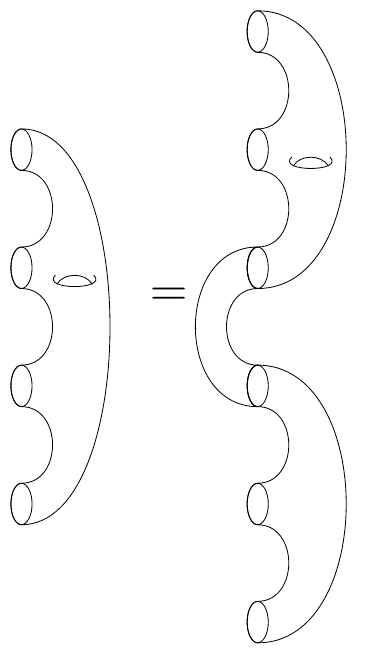}
    \caption{Partial sewing corresponding to the third CohFT axiom.}
    \label{fig:cohft3sewing}
\end{figure}

\noindent This gives a composition of $\omega_{0,0,2}$ applied to two separate linear maps at once.
\be
(\omega_{g_1,|I|+1,0}\tensor \omega_{g_2,|J|+1,0})\circ(\id^{\tensor |I|}\tensor \omega_{0,0,2} \tensor \id^{\tensor |J|})=\omega_{g_1+g_2,|I|+|J|,0}:A^{|I|+|J|}\longrightarrow K.
\ee
\noindent We now give the proof of the classification of 2D TQFT, but we will first establish a few base cases. By definition, we have that $\omega_{0,1,0}(v)=\eps(v)$ and $\omega_{0,2,0}(v,w)=\eta(v,w),$ but we appeal to duality to find $\omega_{0,3,0}$. Note that a linear map $\omega_{g,n+m,0}:\Atn\tensor\Atm\longrightarrow K$ is equivalent to a map $\Atn\longrightarrow (A^*)^{\tensor m}$, so we can reconstruct the map $\omega_{g,n,m}$ from $\omega_{g,n+m,0}$ using the following diagram:
\be\label{cd:reconstruct}
\begin{tikzcd}
A^{\otimes n} \arrow[dd,"="', rotate = 90] \arrow[rr, "{\omega_{g,n+m,0}}"] &  & (A^*)^{\otimes m} \arrow[rr, "(\lambda\inv)^{\otimes m}"] &  & A^{\otimes m} \arrow[dd, "="] \\
                                                                &  &                                                           &  &                               \\
A^{\otimes n} \arrow[rrrr, "{\omega_{g,n,m}}"]                  &  &                                                           &  & A^{\otimes m}                
\end{tikzcd},
\ee
where $\lambda:A\stackrel{\sim}{\longrightarrow}A^*$ is the isomorphism given by the Frobenius form $\eta(v,-).$ We can use this diagram to see that $vw=\omega_{0,2,1}(v,w)=\lambda\inv(\omega_{0,3,0}(v,w,-))$ or equivalently that $\omega_{0,3,0}(u,v,w)=\eta(uv,w)=\eps(uvw).$ This will serve as our base case for all maps of the form $\omega_{0,n,0}$.
\subsection{A classification result}
We will further review the classification result for $\omega_{g,n,0}$ as in \cite{DM_invitation} and \cite{DM_ribbon}

\begin{Prop}\label{prop:0n0case}
    The genus $0$ values of a 2D TQFT with no outputs is given by 
    \be\label{eqn:0n0case}
    \omega_{0,n,0}(v_1,v_2,\ldots,v_n)=\eps(v_1v_2\cdots v_n).
    \ee
\end{Prop}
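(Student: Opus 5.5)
We argue by induction on $n$, using the base cases already established: $\omega_{0,1,0}=\eps$ (TQFT 2, non-triviality), $\omega_{0,2,0}(v,w)=\eta(v,w)=\eps(vw)$ (from $\eps\circ m=\eta$, an instance of partial sewing), and $\omega_{0,3,0}(u,v,w)=\eps(uvw)$ (from the reconstruction diagram \ref{cd:reconstruct} together with $\omega_{0,2,1}=m$). These cover $n=1,2,3$, so it remains to pass from $n$ to $n+1$ for $n\geq 1$.

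For the inductive step, realize $\Sigma_{0,n+1,0}$ as a sewing. Take the pair of pants $\Sigma_{0,2,1}$ with inputs $v_n,v_{n+1}$, and sew its single output circle to the last input circle of $\Sigma_{0,n,0}$. The count in \ref{eqn:partialsewing} gives genus $0+0+1-1=0$, with $(n-1)+2=n+1$ inputs and $0$ outputs, so the result is $\Sigma_{0,n+1,0}$. Partial sewing (TQFT 4) then yields
\[
\omega_{0,n+1,0}=\omega_{0,n,0}\circ(\id^{\tensor n-1}\tensor \omega_{0,2,1}),
\]
so that
\[
\omega_{0,n+1,0}(v_1,\ldots,v_{n+1})=\omega_{0,n,0}(v_1,\ldots,v_{n-1},v_nv_{n+1}).
\]
By the induction hypothesis the right-hand side equals $\eps(v_1\cdots v_{n-1}(v_nv_{n+1}))$. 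Associativity of $m$ then gives $\eps(v_1\cdots v_{n+1})$. Commutativity of $A$, equivalently the symmetry axiom TQFT 1, shows that the choice of which two inputs to merge does not matter.

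The one step needing care is the identification of the sewn surface, together with the use of the previously derived identity $\omega_{0,2,1}=m$. The latter comes from the reconstruction diagram and $\omega_{0,3,0}(u,v,w)=\eta(uv,w)$, both assumed above. Once these are in place the argument is routine. One could alternatively induct purely through duality, using $\omega_{0,n+1,0}(v_1,\ldots,v_n,w)=\eta(\omega_{0,n,1}(v_1,\ldots,v_n),w)$, but the direct sewing argument avoids having to classify $\omega_{0,n,1}$ first.
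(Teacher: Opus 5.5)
Your proof is correct, but the gluing step differs from the paper's. The paper inducts from $n-1$ to $n$ for $n\geq 4$. It splits $\Sigma_{0,n,0}$ into $\Sigma_{0,n-1,0}$ and $\Sigma_{0,3,0}$ using the third CohFT axiom \eqref{eqn:cohft3}, i.e.\ by gluing along the copairing $\omega_{0,0,2}=\delta(1)=\sum_{a,b}\eta^{ab}e_a\tensor e_b$. It then inserts the base case $\omega_{0,3,0}(u,v,w)=\eta(uv,w)$ and uses the canonical basis expansion \eqref{eqn:canonbasis} to collapse $\sum_{a,b}\eta^{ab}\eta(v_{n-1}v_n,e_b)e_a$ to $v_{n-1}v_n$.

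You cut off the same pair of pants but treat its third boundary circle as an output. Your gluing is therefore a plain composition with $\omega_{0,2,1}$, and you reach $\omega_{0,n+1,0}(v_1,\ldots,v_{n+1})=\omega_{0,n,0}(v_1,\ldots,v_{n-1},v_nv_{n+1})$ in one step. This gives a more elementary argument. It needs no inverse matrix $\eta^{ab}$ and no unit. It also needs no $n=2,3$ base cases, since your step already works from $n=1$. Note that $\omega_{0,2,1}=m$ is the paper's definition of $m$, so you need not derive it from $\omega_{0,3,0}$. Your reduction is precisely the TQFT counterpart of \textbf{IECA 1}.

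The paper's route, in exchange, never leaves the output-free maps $\omega_{g,n,0}$. It works purely with the CohFT-style axioms \eqref{eqn:cohft1}--\eqref{eqn:cohft3}, the same tools it reuses for the positive-genus cases in Proposition \ref{prop:g10case} and Theorem \ref{prop:gn0case}.
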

\begin{proof}
    We've already seen the base cases for $n=1,2,$ and $3$. From here, we will use the third CohFT axiom \ref{eqn:cohft3} and the canonical basis expansion \ref{eqn:canonbasis}. Assume $n\geq 4$ and that \ref{eqn:0n0case} holds for all $\omega_{0,k,0}$ with $k<n$. Then,
    \begin{align*}
        \omega_{0,n,0}&=\sum_{a,b}\eta^{ab}\omega_{0,n-1,0}(v_1,\ldots,v_{n-2},e_a)\omega_{0,3,0}(v_{n-1},v_n,e_b)\\
        &=\sum_{a,b}\eta^{ab}\omega_{0,n-1,0}(v_1,\ldots,v_{n-2},e_a)\eta(v_{n-1}v_n,e_b)\\
        &=\sum_{a,b}\omega_{0,n-1,0}(v_1,\ldots,v_{n-2},\eta^{ab}\eta(v_{n-1}v_n,e_b)e_a)\\
        &=\omega_{0,n-1,0}(v_1,\ldots,v_{n-2},v_{n-1}v_n)\\
        &=\eps(v_1\cdots v_{n-1}v_n).
    \end{align*}
\end{proof}
\noindent We now observe that for higher genera, each added genus multiplies the inside of this function by another factor of the Euler element $\eul$.

\begin{Prop}\label{prop:g10case}
    For maps with positive genus $g>0$, we have
    \be\label{eqn:g10case}
    \omega_{g,1,0}(v)=\eps(v\eul^g).
    \ee
\end{Prop}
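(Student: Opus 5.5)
We argue by induction on the genus $g$, with the genus zero statement of Proposition \ref{prop:0n0case} as the base case. For $g=0$ the claim reads $\omega_{0,1,0}(v)=\eps(v)$, which is the Non-triviality axiom (TQFT 2). The inductive mechanism is the second CohFT axiom \eqref{eqn:cohft2}: sewing the tube $\Sigma_{0,0,2}$, whose value is $\delta(1)=\sum_{a,b}\eta^{ab}e_a\tensor e_b$, onto two input circles raises the genus by one.

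Assume $g\geq 1$. Applying \eqref{eqn:cohft2} with $n=1$ gives
\[
\omega_{g,1,0}(v)=\sum_{a,b}\eta^{ab}\,\omega_{g-1,3,0}(v,e_a,e_b).
\]
We therefore need the values $\omega_{g-1,3,0}$ and not only $\omega_{g-1,1,0}$. So we strengthen the induction hypothesis to the statement
\[
\omega_{h,n,0}(v_1,\ldots,v_n)=\eps(v_1\cdots v_n\,\eul^h)\qquad\text{for all } h<g \text{ and all } n\geq1.
\]
Granting this, the display becomes
\[
\eps\Bigl(v\sum_{a,b}\eta^{ab}e_ae_b\,\eul^{g-1}\Bigr)=\eps(v\,\eul\,\eul^{g-1})=\eps(v\,\eul^g),
\]
where the middle step uses the basis expression \eqref{eqn:eulbasis} for the Euler element.

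The main step is proving the strengthened statement for fixed genus $h$ and all $n$. We do this by an inner induction on $n$, copying the proof of Proposition \ref{prop:0n0case}. We split off a genus zero three-holed sphere using \eqref{eqn:cohft3} with $g_1=h$ and $g_2=0$:
\[
\omega_{h,n,0}(v_1,\ldots,v_n)=\sum_{a,b}\eta^{ab}\,\omega_{h,n-1,0}(v_1,\ldots,v_{n-2},e_a)\,\eps(v_{n-1}v_ne_b).
\]
The canonical basis expansion \eqref{eqn:canonbasis} collapses this sum to $\omega_{h,n-1,0}(v_1,\ldots,v_{n-2},v_{n-1}v_n)$. This reduces the $n$-point value to the $(n-1)$-point value, and repeating it reduces everything to $\omega_{h,1,0}$.

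The one-point value $\omega_{h,1,0}$ is supplied by the outer induction, so the two inductions must be run together. We do this by inducting on the pair $(h,n)$ ordered lexicographically. The step from $(g-1,3)$ to $(g,1)$ is legitimate because $(g-1,3)$ precedes $(g,1)$ in this order. The one point requiring care is the case $n=2$, where the splitting would leave a genus $h$ surface with one boundary circle. Here we instead use \eqref{eqn:cohft1}: we write $\omega_{h,2,0}(v_1,v_2)=\omega_{h,3,0}(v_1,v_2,1)$ and split that, or equivalently apply \eqref{eqn:cohft3} with the $(h,2)$ factor having $I=\{1\}$ and $J=\{2\}$.
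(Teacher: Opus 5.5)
Your argument is correct, but it is organized differently from the paper's. Both proofs begin the same way, using \eqref{eqn:cohft2} to write $\omega_{g,1,0}(v)=\sum_{a,b}\eta^{ab}\omega_{g-1,3,0}(v,e_a,e_b)$.

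At that point you strengthen the hypothesis to all $\omega_{h,n,0}$ and run a lexicographic induction on $(h,n)$. You then collapse $\omega_{g-1,3,0}$ to $\omega_{g-1,1,0}$ through two successive three-holed-sphere splits.

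The paper avoids any strengthening. It applies \eqref{eqn:cohft3} once, with the genus-zero factor carrying all three insertions and the genus-$(g-1)$ factor carrying none:
\[
\omega_{g-1,3,0}(v,e_a,e_b)=\sum_{i,j}\eta^{ij}\,\omega_{0,4,0}(v,e_a,e_b,e_i)\,\omega_{g-1,1,0}(e_j).
\]
After the canonical basis expansion this gives $\omega_{g,1,0}(v)=\omega_{g-1,1,0}(v\eul)$ directly. So a plain induction on $g$ with $n=1$ fixed suffices, with base case $g=1$ coming from $\omega_{0,3,0}$.

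The paper's route is shorter. Yours costs a double induction, but it proves the general formula $\omega_{h,n,0}(v_1,\ldots,v_n)=\eps(v_1\cdots v_n\eul^h)$ of Theorem~\ref{prop:gn0case} at the same time, as a self-contained by-product.

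Your closing remark about $n=2$ is unnecessary and should be dropped. A split in which the genus-$h$ factor has a single boundary circle (empty index set) is a legitimate instance of \eqref{eqn:cohft3}. It is exactly the split the paper uses above, and it is precisely what lets the paper skip your strengthening. At $n=2$ it gives $\omega_{h,2,0}(v_1,v_2)=\omega_{h,1,0}(v_1v_2)$, and $(h,1)$ precedes $(h,2)$ in your order, so nothing special is needed.

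The workaround you propose is, read literally, circular. Applying your inner step to $\omega_{h,3,0}(v_1,v_2,1)$ merges $v_2$ with $1$ and returns $\omega_{h,2,0}(v_1,v_2)$. Likewise, \eqref{eqn:cohft3} with $I=\{1\}$, $J=\{2\}$, $g_2=0$ is just the tautology $\omega_{h,2,0}(v_1,v_2)=\sum_{i,j}\eta^{ij}\omega_{h,2,0}(v_1,e_i)\,\eta(v_2,e_j)$.
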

\begin{proof}
    We first use the second CohFT axiom to establish the base case $\omega_{1,1,0}.$
    \begin{align*}
        \omega_{1,1,0}(v)&=\sum_{a,b}\eta^{ab}\omega_{0,3,0}(v,e_a,e_b)\\
        &=\sum_{a,b}\eta^{ab}\eta(v,e_ae_b)\\
        &=\eta(v,\eul)\\
        &=\eps(v\eul).
    \end{align*}
    \noindent Now we use both the second and third CohFT axioms to induct on $n$.
    \begin{align*}
        \omega_{g,1,0}(v)&=\sum_{a,b}\omega_{g-1,3,0}(v,e_a,e_b)\eta^{ab}\\
        &=\sum_{i,j,a,b}\omega_{0,4,0}(v,e_a,e_b,e_i)\omega_{g-1,1,0}(e_j)\eta^{ij}\eta^{ab}\\
        &=\sum_{i,j,a,b}\eta(ve_ae_b,e_i)\omega_{g-1,1,0}(e_j)\eta^{ij}\eta^{ab}\\
        &=\sum_{i,j}\eta(v\eul,e_i)\omega_{g-1,1,0}(e_j)\eta^{ij}\\
        &=\omega_{g-1,1,0}(v\eul)\\
        &=\qquad\vdots\\
        &=\omega_{1,1,0}(v\eul^{g-1})\\
        &=\eta(v\eul^{g-1},\eul)=\eta(v,\eul^g)=\eps(v\eul^g).
    \end{align*}
\end{proof}
\noindent We now use a similar argument to fully generalize the case maps in a 2D TQFT with no outputs.
\begin{Thm}\cite{DM_invitation,DM_ribbon}\label{prop:gn0case}
    For maps whose outputs are in $K$, we have
    \be\label{eqn:gn0case}
    \omega_{g,n,0}(v_1,\ldots,v_n)=\eps(v_1\cdots v_n\eul^g).
    \ee
\end{Thm}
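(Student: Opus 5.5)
We argue by induction on the genus $g$, using Proposition \ref{prop:0n0case} as the base case $g=0$ and Proposition \ref{prop:g10case} to handle $n=1$. The mechanism is the one already used in the proof of Proposition \ref{prop:g10case}: the second CohFT axiom \eqref{eqn:cohft2} raises the genus by sewing in the tube $\delta(1)=\sum_{a,b}\eta^{ab}e_a\tensor e_b$. Once the resulting genus-lowered map is known to be of the claimed form, the sum $\sum_{a,b}\eta^{ab}e_ae_b$ collapses to a single factor of the Euler element $\eul$ by \eqref{eqn:eulbasis}.

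Fix $g\geq 1$ and $n\geq 1$, and assume \eqref{eqn:gn0case} holds for genus $g-1$ and every number of inputs. First apply \eqref{eqn:cohft2}:
\[
\omega_{g,n,0}(v_1,\ldots,v_n)=\sum_{a,b}\eta^{ab}\,\omega_{g-1,n+2,0}(v_1,\ldots,v_n,e_a,e_b).
\]
Next apply the induction hypothesis to the genus $g-1$ map with $n+2$ inputs. Each summand becomes $\eps(v_1\cdots v_n e_a e_b\eul^{g-1})$. Since $\eps$ is linear and $A$ is commutative, we may pull the sum inside:
\[
\omega_{g,n,0}(v_1,\ldots,v_n)=\eps\Bigl(v_1\cdots v_n\,\eul^{g-1}\sum_{a,b}\eta^{ab}e_ae_b\Bigr)=\eps(v_1\cdots v_n\,\eul^{g}),
\]
where the last step uses \eqref{eqn:eulbasis}. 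This closes the induction.

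The only point needing care is that the induction is on $g$ alone, with the hypothesis assumed for \emph{all} $n$ at genus $g-1$. This matters because the step increases the number of inputs from $n$ to $n+2$. The base case must therefore be Proposition \ref{prop:0n0case} for all $n\geq 1$, which the paper already establishes.

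Alternatively, one could mimic the proof of Proposition \ref{prop:g10case}: split off a genus $g-1$ piece via the third CohFT axiom \eqref{eqn:cohft3} and use the canonical basis expansion \eqref{eqn:canonbasis} to reassemble. That route works but is longer. The direct route above only requires that \eqref{eqn:cohft2} holds as a consequence of the partial sewing axiom, which the paper has already explained.

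I expect no genuine obstacle. The main thing to verify is the legitimacy of moving the finite sum $\sum_{a,b}\eta^{ab}$ inside $\eps$ and commuting the factors $e_a,e_b$ past the $v_i$. Both follow from linearity and commutativity, which hold in the Frobenius algebra setting of this section.
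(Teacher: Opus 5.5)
Your proof is correct, but it is organized differently from the paper's. The paper says its argument is ``largely the same as that of Proposition \ref{prop:g10case}''. That argument keeps $n$ fixed and uses \eqref{eqn:cohft2}, then \eqref{eqn:cohft3} to cut off a four-holed sphere $\omega_{0,4,0}$, then the basis expansion \eqref{eqn:canonbasis}. This yields the genus-transfer identity $\omega_{g,n,0}(v_1,\ldots,v_n)=\omega_{g-1,n,0}(v_1\eul,v_2,\ldots,v_n)$. Iterating it gives $\omega_{1,n,0}(v_1\eul^{g-1},v_2,\ldots,v_n)$, and only then does the paper apply \eqref{eqn:cohft2} once together with Proposition \ref{prop:0n0case} for $n+2$ inputs.

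You instead apply \eqref{eqn:cohft2} at every genus and feed the genus-$(g-1)$ closed form straight in. You therefore never need \eqref{eqn:cohft3} or \eqref{eqn:canonbasis} in the inductive step. The price is that the number of inputs grows by two per step, so the induction hypothesis must range over all $n$ and the base case must be the full genus-zero formula, exactly as you note. For the same reason Proposition \ref{prop:g10case} is superfluous in your argument, since your step already covers $n=1$.

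The paper's route gives an identity that needs only the genus-zero four-point value, not the lower-genus maps in closed form. It is the TQFT counterpart of the petal Lemma \ref{lem:redpetal} used later for ribbon graphs. Your route gives brevity and avoids the exponent bookkeeping in the paper's displayed chain, whose middle lines write $v_1\eul^{g}$ where $v_1\eul^{g-1}$ is meant.
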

\begin{proof}
    The argument here is largely the same as that of \ref{prop:g10case}.
    \begin{align*}
        \omega_{g,n,0}(v_1,\ldots,v_n)&=\omega_{1,n,0}(v_1\eul^{g-1},v_2,\ldots,v_n)\\
        &=\sum_{a,b}\omega_{0,n+2,0}(v_1\eul^g,v_2,\ldots,v_n,e_a,e_b)\eta^{ab}\\
        &=\omega_{0,n,0}(v_1\eul^g,v_2,\ldots,v_{n-1},v_n\eul)\\
        &=\eps(v_1\cdots v_n\eul^g).
    \end{align*}
\end{proof}
 
 Theorem \ref{prop:gn0case} gives the 2D TQFT value for $m=0$. It is not difficult to see that this result can be extended to the case where $m\geq1$. For the $m=1$ case, applying $\lambda^{-1}$ once to this result give just $v_1\cdots v_n\eul^g.$ For higher outputs $m>1$, one can then make successive use of the comultiplication map $\delta$ to the one output case. The result given in \cite{thesis} and \cite{DaDu} is that for any Frobenius algebra, $A$, the $m\geq1$ portion of the 2D TQFT $\omega_{g,n,m}$ is given by

        \be\label{eqn:tqftclassification}
        \omega_{g,n,m}(v_1,\ldots,v_n)=\begin{cases}
            v_1\cdots v_n\eul^g,& m=1\\
            \delta^{m-1}(v_1\cdots v_n\eul^g),& m\geq 2
        \end{cases}.
        \ee

\subsection{Almost TQFT}
The notion of 2D TQFT can be generalized to Nearly Frobenius algebras, as introduced by Gonzalez et al. \cite{GLSU} as \textit{Almost TQFT}. If we consider, $\mathbf{2Cob^+}$, the full subcategory of \cob whose objects are a disjoint union of a positive number of copies of $S^1$ and $\mathbf{\textbf{KVect}^\infty}$, the category of possibly infinite-dimensional vector spaces over $K$, then an Almost TQFT is the functor \be
Z:\mathbf{2Cob^+}\longrightarrow\mathbf{\textbf{KVect}^\infty}.
\ee
It has been largely studied that the categories of Frobenius algebras and 2D TQFT are isomoprhic, and Gonzalez et. al similarly show that there is an isomorphism of categories between nearly Frobenius algebras and Almost TQFT.\\

In fact, this classification result in Equation \ref{eqn:tqftclassification} can be further extended to nearly Frobenius algebras and Almost TQFT, but only in the case that the algebra has a counit. The partial sewing techniques used in this setting can still be applied to a counital nearly Frobenius algebra; though, we note that for nearly Frobenius algebras which are not counital, we must develop the techniques found in Chapters \ref{SectionCCG} and \ref{SecInfiniteCCG}.
\begin{Thm} \cite{thesis, DaDu}\label{theorem A} 
	 The value of the Almost TQFT associated to a counital nearly Frobenius algebra $A$ with counit $\eps$, coproduct $\delta$, and Euler map $\bige$ is given by
	\be
	\omega_{g,n,m}(v_1,\ldots,v_n)=\begin{cases}
		\bige^g(v_1\cdots v_n),& m=1,\\
		\delta^{m-1}(\bige^g(v_1\cdots v_n)),& m\geq 2,\\
		\eps(\bige^g(v_1\cdots v_n)),& m=0.
	\end{cases}
	\ee
\end{Thm}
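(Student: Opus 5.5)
We prove Theorem \ref{theorem A} through the partial sewing axiom, TQFT~4. Each surface $\Sigma_{g,n,m}$ with $n\geq 1$ will be written as a composition of the generating cobordisms that live in $\mathbf{2Cob^+}$: the pairs of pants $m=\omega_{0,2,1}$ and $\delta=\omega_{0,1,2}$, the cylinder $\id=\omega_{0,1,1}$, and the cap $\eps=\omega_{0,1,0}$, which exists because $A$ is counital. The unit cap $\omega_{0,0,1}$ is never used. This matters: $A$ need not be unital, so in particular the element $\delta(1)$ from the CohFT axioms \ref{eqn:cohft2} and \ref{eqn:cohft3} is not available. The argument will therefore work with $\delta$ and $\bige=m\circ\delta$ directly, not with the Euler element or the form $\eta^{ij}$.

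\emph{Step 1 (genus $0$, one output).} By induction on $n$ we show $\omega_{0,n,1}(v_1,\ldots,v_n)=v_1\cdots v_n$. For $n=1$ the surface is the cylinder, which gives $\id$, and for $n=2$ it is $m$. For the inductive step, sew the output of $\Sigma_{0,n-1,1}$ into one input of a pair of pants $\Sigma_{0,2,1}$. Partial sewing with $j=1$ gives a surface of type $(0,n,1)$, and associativity of $m$ makes the result independent of the order of multiplication. Symmetry (TQFT~1) lets us place the inputs anywhere.

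\emph{Step 2 (adding genus).} Gluing $\delta$ followed by $m$ onto the output produces a handle. Sewing $\Sigma_{0,1,2}$ to $\Sigma_{0,2,1}$ along both boundary circles ($j=2$) gives $\Sigma_{1,1,1}$, so $\omega_{1,1,1}=m\circ\delta=\bige$. Postcomposing $\Sigma_{g-1,n,1}$ with $\Sigma_{1,1,1}$ adds one to the genus, and induction gives $\omega_{g,n,1}=\bige^g(v_1\cdots v_n)$. We will also record that the Frobenius relation \ref{eqn:frobrel} gives $\bige(uv)=\bige(u)v=u\bige(v)$. This identity shows the value does not depend on where the handles are attached relative to the multiplications. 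It is the consistency check that makes the answer well defined, rather than merely one choice of decomposition.

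\emph{Step 3 (several outputs or none).} For $m\geq 2$, postcompose $\Sigma_{g,n,1}$ with the genus-$0$ tree of coproducts $\Sigma_{0,1,m}$. Coassociativity shows this tree equals $\delta^{m-1}$, where $\delta^{m-1}$ denotes the iterated coproduct (well defined by coassociativity). Partial sewing with $j=1$ then gives $\omega_{g,n,m}=\delta^{m-1}(\bige^g(v_1\cdots v_n))$. For $m=0$, cap the single output with $\eps$ to get $\eps(\bige^g(v_1\cdots v_n))$.

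The main obstacle is justifying that partial sewing actually determines each $\omega_{g,n,m}$ from these particular decompositions. One needs that every surface of type $(g,n,m)$ with $n\geq1$ arises from the chosen gluing pattern, which holds by the classification of surfaces, and that other decompositions give the same answer. The second point is exactly where the relations of $\mathbf{2Cob^+}$ enter: associativity, coassociativity, the Frobenius relation and (co)commutativity. I would handle it by appealing to functoriality of $Z$ on $\mathbf{2Cob^+}$ together with the isomorphism of categories of \cite{GLSU}. Then the computed value only needs to be produced by one decomposition, and independence comes for free.
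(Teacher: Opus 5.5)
Your argument is correct, but it runs in the opposite direction from the one the paper writes out. The paper only cites \cite{thesis, DaDu} for Theorem \ref{theorem A}, remarking that the partial sewing techniques of Section \ref{sec:TQFT} still apply. The computation displayed there (Propositions \ref{prop:0n0case}, \ref{prop:g10case} and Theorem \ref{prop:gn0case}) first determines the no-output maps $\omega_{g,n,0}$ from the CohFT axioms \eqref{eqn:cohft1}--\eqref{eqn:cohft3}. Those axioms rely on the unit cap, the copairing $\delta(1)=\sum_{a,b}\eta^{ab}e_a\tensor e_b$ and the expansion \eqref{eqn:canonbasis}; the paper then passes to $m\geq 1$ through $\lambda\inv$ and iterated $\delta$.

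You start instead at one output, realize each handle as $\bige=m\circ\delta$ postcomposed on that output, and attach a coproduct tree or the counit cap only at the very end. Since this uses nothing beyond $\id$, $m$, $\delta$, $\eps$ and TQFT~4, it avoids exactly the data a counital nearly Frobenius algebra may lack: a unit, an inverse matrix $\eta^{ab}$, and an inverse of $\lambda$ (which cannot exist when $A$ is infinite dimensional). It also yields a slightly sharper statement. Your $m\geq1$ rows never use $\eps$ and only involve surfaces with $n,m\geq 1$, so they hold for every nearly Frobenius algebra, in line with Theorem \ref{thm:infcellgraph}; counitality matters only for the $m=0$ row.

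Two minor points. First, $\mathbf{2Cob^+}$ as defined has no morphisms with empty target. So the $m=0$ row comes from the instance $\omega_{g,n,0}=\omega_{0,1,0}\circ\omega_{g,n,1}$ of partial sewing with the cap $\omega_{0,1,0}=\eps$ adjoined, not from functoriality on $\mathbf{2Cob^+}$. Second, the ``main obstacle'' you flag is not needed for this statement. The maps are indexed by $(g,n,m)$, so one decomposition already fixes each value; independence of the decomposition is the existence question, which \cite{GLSU} supplies and the theorem presupposes.
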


\section{Colored Cell Graphs and Edge Axioms}\label{SectionCCG}

\subsection{ Colored Cell Graphs}\label{chap:ccg}

In this chapter, we provide an alternate approach to the formulation of 2D TQFT. Dumitrescu and Mulase \cite{DM_invitation} introduced a formulation of the $m=0$ maps of a 2D TQFT using axioms on the category of cell graphs. Here, we expand upon these axioms by considering particular colorings of these cell graphs and introducing new axioms that respect this coloring.

We now consider coloring some subset of vertices of a cell graph. A colored cell graph of type $(g,n,m)$ is a cell graph of genus $g$ where $n$ of its vertices are labeled with one color, $m$ of its vertices are labeled with a different color, and the rest of the vertices remain uncolored. Of course, we could instead consider this as a cell graph where each of its vertices are labeled with one of three colors, but we would like for two of these colors to hold distinction.
\begin{Def}
    For a cell graph of type $(g,n,m)$ the $n$ vertices in one color are known as \textbf{input vertices} and the $m$ vertices in the second color are known as \textbf{output vertices}. Throughout this work, we label input vertices as red, output vertices as blue, and all other vertices as black.
\end{Def}
\begin{Rmk}
    In many contexts of graph coloring, one requires that two adjacent vertices cannot be labeled with the same color. We do not impose such a requirement here. Rather, this coloring of vertices induces a coloring on the edges of the graph. An edge connecting two input vertices or two output vertices share the same color as the two vertices it connects, and all other edges remain colorless.
\end{Rmk}
\noindent In addition this coloring of vertices inducing a coloring on the edges, this coloring also induces directions on (most of) the uncolored edges. If an uncolored edge is connected to an input vertex, then it points away from the input vertex, and if it is connected to an output vertex, then it points towards the output vertex. Edges between two uncolored vertices may point in either direction, so long as each uncolored vertex has at least one incoming edge and one outgoing edge.

\begin{Rmk}
Following these requirements, if we consider the subgraph containing all of the directed edges and those vertices connected to those edges, the input vertices are \textbf{sources}, the output vertices are \textbf{sinks}, and the uncolored vertices are known as \textbf{flow vertices}. We let $\Gamma_{g,n,m}$ denote the set of all cell graphs of genus $g$ with $n$ input vertices, $m$ output vertices, and some non-negative number of flow vertices. 
  \end{Rmk}

\begin{Def}[Ribbon TQFT]\label{def:ribbonTQFT}Let $A$ be a Nearly Frobenius algebra.  We define a {\bf ribbon TQFT} to be an assignment of a multilinear map compatible with the Edge Contraction/Construction Axioms.

    \be\label{eqn:CCGfunctor}
    \Omega: \Gamma_{g,n,m}\longrightarrow \Hom(\Atn,\Atm).
    \ee

Namely, for any colored cell graph $\gamma\in  \Gamma_{g,n,m}$ with $n,m\geq 1, $consider  $\Omega(\gamma):\Atn\longrightarrow\Atm.$
 an $n$-variable function $\Omega(\gamma)(v_1,\ldots,v_n)$ assigning $v_i\in A$ to the $i$-th input vertex of $\gamma$, so that $\Omega(\gamma)$ satisfies
 
 \begin{enumerate}
\item  Input Edge Contraction Axioms: Definition \ref{def:redaxioms}.
\item Output Edge Construction Axioms: Definition \ref{Output edge-construction axioms}. 
\item Flow Axioms and Base cases: Definition \ref{def:flowaxioms}.
\item Disc-bounding Loop Removal: If $\gamma, \gamma^\prime$ are graphs which only differ by a loop $L$ which bounds a disc, then we require that $\Omega(\gamma)=\Omega(\gamma^\prime)$ (see Fig. \ref{fig:loopremovalaxiom}).
 \end{enumerate}
\begin{figure}[ht]
	\centering
	\includegraphics[width=0.3\linewidth]{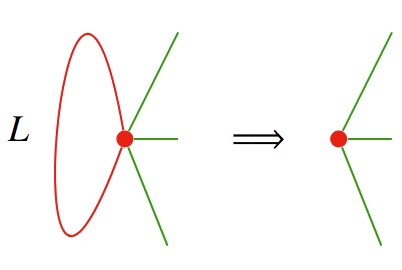}
	
	\caption{Axiom (4): Removal of a loop $L$ which bounds a disc}
	\label{fig:loopremovalaxiom}
\end{figure}

\end{Def}
	We note that in the case where $m=0$, the graph $\gamma$ has only input vertices, and therefore we only need to consider axioms (1) and (4) in the definition above. Further, in the case that $A$ is a Frobenius algebra, axiom (4) is redundant, as it is implied by the axiom (1) and the existence of a unit, and is given as Case 1 of Lemma \ref{lem:redloopremoval}.

  We claim that this assignment gives an alternate formulation of 2D TQFTs. Dumitrescu and Mulase \cite{DM_ribbon} give this formulation in the case of uncolored cell graphs and assignment of a map $\Omega(\gamma):\Atn\longrightarrow K$, to a genus $g$ cell graph with $m$ vertices. This formulation corresponds exactly to the $m=0$ case in our formulation. Inspired by their axioms on uncolored cell graphs, we introduce our own axioms that this assignment \ref{eqn:CCGfunctor} must satisfy for colored cell graphs.

\subsection{Edges between Input Vertices}\label{SecRedAxioms}
 The axioms listed in this subsection are similar to the edge-contraction axioms of Dumitrescu and Mulase \cite{DM_ribbon} in the uncolored case, but now we apply them only to edges connecting input vertices in our colored cell graph.
\begin{Def}[Input Edge-Contraction Axioms]\label{def:redaxioms} We say
	the assignment \ref{eqn:CCGfunctor} satisfies the following \textbf{input edge-contraction axioms} if

    \begin{itemize}
        \item \textbf{IECA 1:} Suppose there is an edge $E$ connecting the $i$-th and $j$-th input vertices of $\gamma$ where $i<j$ and $\gamma\in\Gamma_{g,n,m}$. Let $\gamma\pri\in\Gamma_{g,n-1,m}$ be the cell graph obtained by contracting $E$. Then,\be
        \Omega(\gamma)(v_1,\ldots,v_n)=\Omega(\gamma\pri)(v_1,\ldots,v_{i-1},v_iv_j,v_{i+1},\ldots,\widehat{v_j},\ldots,v_n),
        \ee
        where $\widehat{v_j}$ means we ignore the $j$-th input variable $v_j$, as its corresponding input vertex no longer exists in the graph $\gamma\pri$.
        \begin{figure}[ht]
            \centering
            \includegraphics[width=0.4\linewidth]{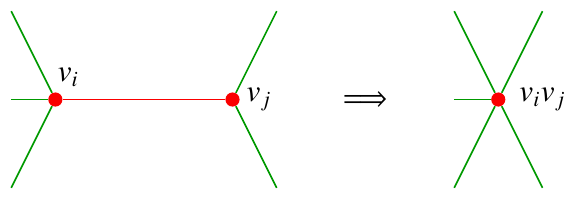}
            
            \caption{The input edge-contraction operation contracting an edge $E$ between input vertices $i$ and $j$}
            \label{fig:IECA1}
        \end{figure}
        \item \textbf{IECA 2:} Suppose there is a loop $L$ at the $i$-th input vertex of the cell graph $\gamma\in\Gamma_{g,n,m}$. Let $\gamma\pri$ be the potentially disconnected graph obtained by contracting the loop $L$ and separating the $i$-th input vertex into two distinct input vertices labeled $i$ and $i\pri$. For the purposes of labelling, we will order $i-1<i<i\pri<i+1$.
       \begin{figure}[ht]
            \centering
            \includegraphics[width=0.4\linewidth]{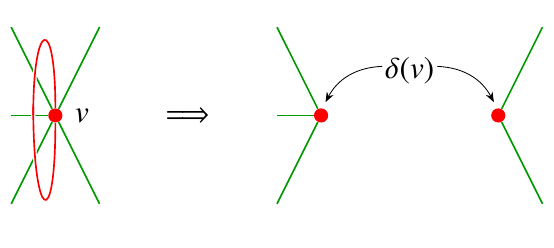}
            \caption{The input edge-contraction operation contracting a loop $L$ at input vertex $i$}
            \label{fig:IECA2}
        \end{figure}
        If $\gamma\pri$ is connected, then $\gamma\pri\in\Gamma_{g-1,n+1,m}$. In this case, we have that 
        \be
        \label{IECA2aeqn}
        \Omega(\gamma)(v_1,\ldots,v_n)=\Omega(\gamma)(v_1,\ldots,v_{i-1},\delta(v_i), v_{i+1},\ldots,v_n)
        \ee where the output of the comultiplication $\delta(v_i)$ is inserted into the $i$-th and $i\pri$-th input slots of the linear map.
        If, however, $\gamma\pri$ is disconnected, then write $\gamma\pri=(\gamma_1,\gamma_2)\in\Gamma_{g_1,|I|,m_1}\times\Gamma_{g_2,|J|,m_2}$ where $g=g_1+g_2,\, m=m_1+m_2,$ and $I\sqcup J=\{1,\ldots,\widehat{i},\ldots n\}$. We say $\gamma_1$ has input vertices with labels $\{v_I, v_i\}$ and $\gamma_2$ has input vertices with labels $\{v_J, v_{i\pri}\}$. Now let $(I_-,i,I_+)$ and $(J_-,i\pri,J_+)$ be reorderings of $I\sqcup\{i\}$ and $J\sqcup\{i\pri\}$ in increasing order. In this case, we have that
        \be
        \Omega(\gamma)(v_1,\ldots,v_n)=\sum_{a,b,k,\ell}\eta(v_i,e_ke_\ell)\eta^{ka}\eta^{lb}\Omega(\gamma_1)(v_{I_-},e_a,v_{I_+})\otimes\Omega(\gamma_2)(v_{J_-},e_b,v_{J+}),
        \ee which is similar to \ref{IECA2aeqn}, but since the output of the comultiplication $\delta(v_i)$ is inserted into input slots of two separate connected pieces, we write it in terms of a basis for $A$. Due to this tensor product being split across disconnected components, we assume that $A$ is cocommutative in this formula.
    \end{itemize}
\end{Def}
\subsection{Edges between Output Vertices}\label{SecBlueAxioms}

In this section, and throughout the remainder of this work, we introduce the following notation regarding the composition of maps between tensor powers of $A$. In particular, for a graph-induced TQFT $\Omega(\gamma)$, we define the compositions $(\delta \circ_{j} \Omega(\gamma))$ and $(m \circ_{i,j\mapsto i}\Omega(\gamma))$. We define composition $(\delta \circ_{j} \Omega(\gamma))$ by applying comultiplication to the $j$-th component of the output of $\Omega(\gamma):\Atn\longrightarrow\Atm$, and the result of that comultiplication will be the $j$-th and $j\pri$-th components of the resulting element of $A^{\tensor m+1}$. Similarly, we define $(m\circ_{i,j\mapsto i}\Omega(\gamma))$ by applying multiplication to the $i$-th and $j$-th components of the output of $\Omega(\gamma)$ and the result of that multiplication will be the $i$-th component of the resulting element of $A^{\tensor m-1}$ where the $j$-th component no longer exists. We show this definition more explicitly below for simple tensors and extend linearly to the whole of $\Atm$.

\begin{Def}
If $\Omega(\gamma)(v)=a_1\tensor a_2\tensor\cdots\tensor a_n$, then
\begin{align*}
	\delta \circ_{j} \Omega(\gamma)&:=a_1\tensor a_2\tensor\cdots\tensor a_{j-1}\tensor \delta(a_j)\tensor a_{j+1}\tensor\cdots\tensor a_n,\\
	m\circ_{i,j\mapsto i}\Omega(\gamma)(v)&:=a_1\tensor\cdots\tensor a_{i-1}\tensor a_ia_j\tensor a_{i+1}\tensor \cdots \tensor \widehat{a_j}\tensor\cdots\tensor a_n.
\end{align*}
\end{Def}
The following output edge construction axioms are a new set of rules about the assignments of a multilinear map to each colored cell graph $\gamma\in\Gamma_{g,n,m}$. These axioms are in some way dual to the input edge contraction axioms, and we apply them to the creation of edges connecting output vertices in our colored cell graph.
\begin{Def}[Output edge-construction axioms] \label{Output edge-construction axioms} We say the assignment \ref{eqn:CCGfunctor} satisfies the following \textbf{output edge construction axioms} if
    \begin{itemize}
        \item \textbf{OECA 1:} Suppose $\gamma\in\Gamma_{g,n,m}$ is a colored cell graph and $\gamma\pri$ is the graph obtained by splitting the $j$-th output vertex into two vertices $j$ and $j\pri$ with an edge connecting them (in such a way that if this edge between vertices $j$ and $j\pri$ of $\gamma\pri$ is contracted, then the resulting graph is $\gamma$). As before for labelling purposes, we will order $j-1<j<j\pri<j+1$. Then $\gamma\pri\in\Gamma_{g,n,m+1}$ and we impose that \be\label{eqn:deltacomp}
        \Omega(\gamma\pri)(v_1,\ldots,v_n)=\delta\circ_j \Omega(\gamma)(v_1,\ldots,v_n).
        \ee 
 
    \begin{figure}[htb]
        \centering
            \includegraphics[width=0.4\linewidth]{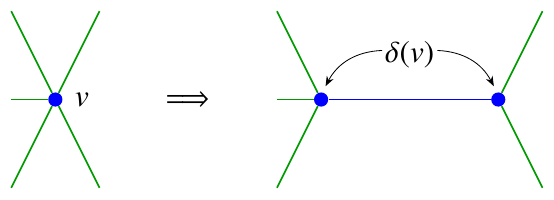}
            \caption{The output edge-construction operation splitting output vertex $j$ into two vertices with an edge between them}
            \label{fig:OECA1}
    \end{figure}
    \item \textbf{OECA 2a:} Suppose $\gamma\in\Gamma_{g,n,m}$ is a colored cell graph and $\gamma\pri$ is the graph obtained by taking the output vertices $i$ and $j$ of $\gamma$, combining them into the same vertex $i$, and then adding a loop at that vertex (such that if we were to contract this loop in $\gamma\pri$, the resulting graph would be $\gamma$). Then $\gamma\pri\in\Gamma_{g+1,n,m-1}$ and we impose that 

    \be
    \Omega(\gamma\pri)(v_1,\ldots,v_n)=m\circ_{i,j\mapsto i}\Omega(\gamma)(v_1,\ldots,v_n),
    \ee

    \item \textbf{OECA 2b: }This operation works the same way as \textbf{OECA 2a} if instead $\gamma=(\gamma_1,\gamma_2)\in \Gamma_{g_1,n_1,m_1}\times \Gamma_{g_2,n_2,m_2}$ is a disconnected with two connected pieces and $\gamma\pri\in\Gamma_{g,n,m}$ is created by combining one output vertex from each of the two connected pieces and adding a loop at the newly combined vertex (such that if we were to contract this loop in $\gamma\pri$, the resulting graph would be $\gamma$). In this case, we have $g_1+g_2=g,\, n_1+n_2=n, $ and $m_1+m_2=m$. We will relabel the input and output vertices of $\gamma_1$ as $\{v_1,v_2,\ldots v_{n_1},u_1,u_2,\ldots, u_{m_1}\}$ and the input and output vertices of $\gamma_2$ as $\{v_{n_1+1},v_{n_1+2},\ldots v_{n_1+n_2},u_{m_1+1},\\u_{m_1+2},\ldots, u_{m_1+m_2}\}$, and suppose that the vertices we combine are the output vertices $u_i$ and $u_j$ where $i\leq n_1<j$. In this case, we impose that
    \be
    \Omega(\gamma\pri)(v_1,\ldots,v_n)=m\circ_{i,j\mapsto i}\left(\Omega(\gamma_1)(v_1,\ldots,v_{n_1})\tensor\Omega(\gamma_2)(v_{n_1+1},\ldots,v_{n_1+n_2})\right).
    \ee
       \begin{figure}[htb]
        \centering
           \includegraphics[width=0.4\linewidth]{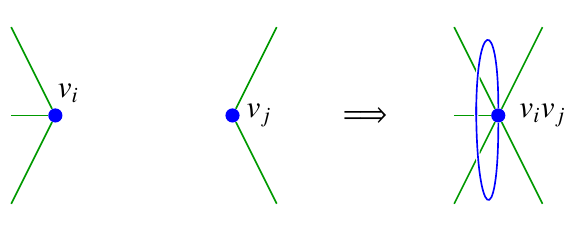}
            \caption{The output edge-construction operation combining output vertices $i$ and $j$ into a single vertex with an additional loop}
            \label{fig:OECA2}
    \end{figure}
       \end{itemize}
\end{Def}

\subsection{Topologically-inspired Axioms}\label{SecGreenAxioms}

Our previous axioms dealt with the contraction of edges between input vertices and the construction of edges between output vertices. In this section, we introduce the following topological equivalences on graphs, dealing with edges connected to uncolored vertices or vertices of differing colors. We begin by defining two topological moves on graphs and introduce what it means for two graphs to be equivalent under these moves.

In the case that a colored cell graph is actually a bipartite graph, then there are no edges between vertices of the same color and there are no flow vertices. Here, the only edges that exist are those connecting input vertices directly to output vertices, so we can consider them all as directed edges pointing from input vertices to output vertices.

\begin{Def}[Duplicate Edge Removal and Vertex Splitting/Directed Edge Contraction]	\label{VSDEC and DER}
	\noindent
	\begin{itemize}
		\item \textbf{Duplicate Edge Removal (DER): }Suppose $\gamma$ is a colored cell graph and $E_1, E_2$ are two edges contained in $\gamma$. If $E_1$ and $E_2$ are incident to the same two vertices and together bound a face of the graph, then consider the graph $\gamma\pri$ obtained by removing either of these two edges. In this case, we say that $\gamma$ and $\gamma\pri$ are equivalent under DER.
		\begin{figure}[ht]
			\centering
			\includegraphics[width=0.5\linewidth]{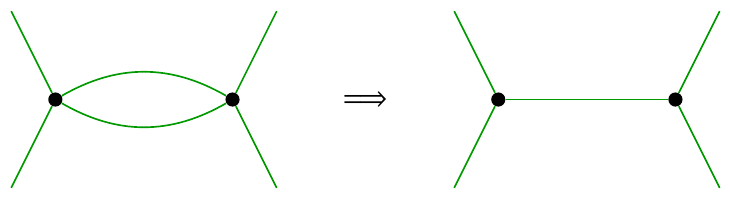}
			\caption{Duplicate edge removal.}
			\label{fig:DupEdge}
		\end{figure}

		\item \textbf{Vertex Splitting/Directed Edge Contraction (VSDEC): }Given a vertex $v$ with edges $E_1,E_2,\ldots, E_d$ in cyclic order, we can split the vertex $v$ into two new vertices $v$ and $v\pri$ with a directed edge $E\pri$ between them. We assign edges to these two vertices in the following way: For any choice of integers $i,j$ with $1\leq i\leq j\leq d$, the vertex $v$ gets assigned the edges $E_1,\ldots E_i, E\pri, E_{j+1},\ldots, E_d$ in cyclic order, and the vertex $v\pri$ gets assigned the edges $E\pri, E_{i+1},\ldots E_{j+1}$ in cyclic order (note that if $i=j$, then one of the two new vertices will be assigned only the edge $E\pri$). If $\gamma$ is a colored cell graph and $\gamma\pri$ is a graph created by applying a vertex splitting move to $\gamma$, then we say that $\gamma$ and $\gamma\pri$ are equivalent under VSDEC.
		
		\begin{figure}[ht]\label{VSDEC}
			\centering
			\includegraphics[width=.5\linewidth]{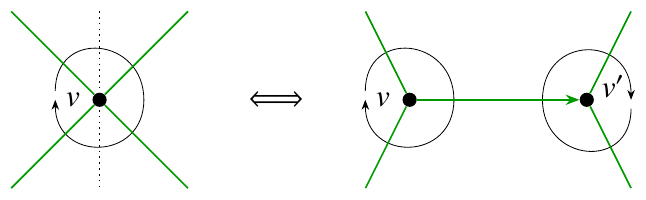}
			\caption{Vertex splitting move splitting a valence 4 vertex into two separate vertices.}
			\label{fig:VertSplit}
		\end{figure}
	\end{itemize}
\end{Def}

\begin{Rmk} 
	Note that if $v$ is an input vertex of Figure \ref{VSDEC}, then one of the two new vertices must remain an input vertex while the other becomes a flow vertex and the directed edge $E\pri$ points from the input vertex to a flow vertex. Similarly, if $v$ is an output vertex, then one of the two new vertices must remain an output vertex while the other becomes a flow vertex and the directed edge $E\pri$ points from the flow vertex to the output vertex. If $v$ is already a flow vertex, then both new vertices become flow vertices and either choice of direction of $E\pri$ is permissible.
\end{Rmk}
\begin{Rmk}
	
	The inverse operation of a vertex splitting operation is simply a contraction of a directed edge. We allow for the contraction of any directed edge that is not incident to both an input vertex and an output vertex; any directed edge we contract must be incident to at least one flow vertex. The resulting vertex after contraction will then take the type of the other vertex which is not the required flow vertex. That is, the contraction of an edge between an input vertex and a flow vertex will result in an input vertex, the contraction of an edge between a flow vertex and an output vertex will result in an output vertex, and the contraction of an edge between two flow vertices will result in a flow vertex.

	If $\gamma$ is a colored cell graph and $\gamma\pri$ is a graph created by contracting such a directed edge, then $\gamma$ can be equivalently obtained by performing a vertex splitting move to $\gamma\pri$. As such, we see that $\gamma$ and $\gamma\pri$ are equivalent under VSDEC.
\end{Rmk}

We next impose on the assignment $\Omega$ that it must respect the equivalence of the VSDEC and DER moves. Namely, if two graphs are equivalent under these moves, then they should give rise to the same multilinear map. The final assumption we will make on $\Omega$ is the particular multilinear maps assigned to some small foundational graphs. For the graph with one input vertex, no output vertices, and no edges, the graph with one output vertex, no input vertices and no edges, the graph with one input and one output vertex with a single edge between them, the graph with one input vertex connected to two output vertices by single edges, and the graph with one output vertex connected to two input vertices by single edges, we require that their associated multilinear maps are the counit, unit, identity map, comultiplication, and multiplication of the algebra, respectively. The following are the final two axioms dictating the assignment of a multilinear map $\Omega$ to a colored cell graph $\gamma$:
\begin{Def}[Flow Axioms and Base cases] \label{def:flowaxioms}
\noindent

	\begin{enumerate}
	 \item We impose that $\Omega(\gamma)=\Omega(\gamma\pri)$ whenever
	  $\gamma$ and $\gamma\pri$ are equivalent under either VSDEC or DER via Definition \eqref{VSDEC and DER}.
	 \item We further impose the following assignments for these five simple graphs:
	\begin{figure}[ht]
		\centering
		\includegraphics[width=0.5\linewidth]{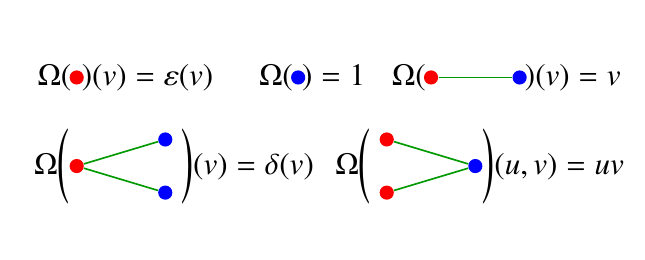}
		\caption{Simple graphs with associated maps corresponding to the counit, unit, identity, comultiplication, and comultiplication.}
		\label{fig:simplegraphs}
	\end{figure}
\end{enumerate}
\end{Def}
\begin{Rmk}
    The VSDEC and DER equivalences on directed edges not only allow us to transform every bipartite cell graph into a particular canonical form, but they also lead to particular topological consequences which share similar properties to those found in the structure of Frobenius algebras. In fact, the following are a list of operations such that if two graphs differ by one or more of these operations, then they are equivalent under VSDEC/DER.
\end{Rmk}
    
    \subsection{Frobenius Structures on Graphs}
    The following are three operations on ribbon graphs which resemble the relations on cobordisms in Section \ref{sec:TQFT}. Lemma \ref{lem:impliedmoves} indicates that these operations become redundant given the definitions of VSDEC/DER. Nevertheless, these operations are useful in practice in determining a canonical form of graphs of a given topological type.
    
\begin{Def}\label{def:topologicalmoves} (Implied operations) 
	
    \begin{itemize}
        \item \textbf{Valence 2 Flow Removal: }Let $\gamma$ be a directed cell graph, and let $v$ be a vertex of $\gamma$ with exactly one incoming and one outgoing vertex. Consider instead the graph obtained by removing $v$ and its two edges and adding a directed edge between the two vertices adjacent to $v$ in the same directions as before. This graph is the same as the graph obtained by simply contracting either of the edges connected to $v$. In short, we may remove any flow vertices which have valence exactly 2.
         \begin{figure}[ht]
    \centering
    \includegraphics[width=0.5\linewidth]{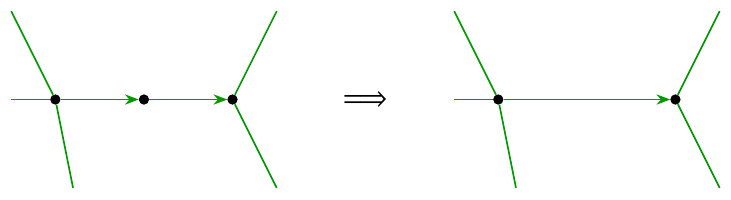}
    \caption{Removal of a flow vertex of valence 2 via edge contraction.}
    \label{fig:val2flow}
\end{figure}
        
        \item \textbf{Frobenius ``z=x'' Move: }Suppose a directed cell graph has a path of length 3 where the edges in the path are of alternating direction. Further assume that at the two vertices where two of these edges in the path meet, the edges are adjacent to each other in the cyclic order at these vertices. By splitting both of these two hinge vertices and contracting the edge between them, the valence of each hinge vertex is reduced by one and a new vertex of valence 4 is created. Through this move, a ``z-shaped'' or ``s-shaped'' subgraph of this type can be replaced with an ``x-shaped'' subgraph in a way that leaves the genus of the cell graph invariant and does not change the associated linear map.

        \begin{figure}[ht]\label{s shape}
    \centering
    \includegraphics[width=0.6\linewidth]{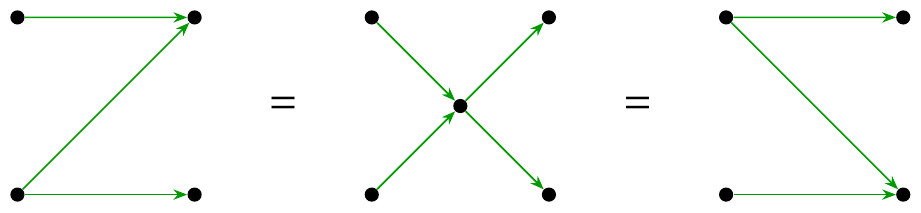}
    \caption{Equivalent subgraphs under the Frobenius ``z=x'' move.}
    \label{fig:zequalsx}
\end{figure}
        \item \textbf{Associativity/Co-associativity: }Splitting a vertex with three adjacent outgoing edges can be realized in two different ways. This bears a resemblance to the co-associativity of the pair of pants decomposition of a cobordism with one input hole and three output holes.
Similarly, splitting a vertex with three adjacent incoming edges can be realized in two different ways, bearing a resemblance to the associativity of the pair of pants decomposition of a cobordism with three input holes and one output hole. These can be both seen in Figure \ref{fig:coassocgraph}
\begin{figure}[ht]
    \centering
        \includegraphics[width=0.3\linewidth]{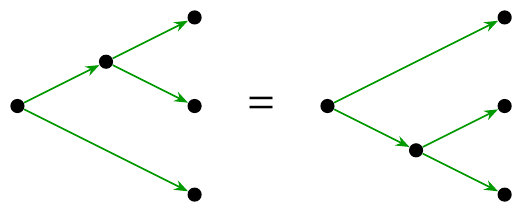}\hspace{1cm}
    \includegraphics[width=0.3\linewidth]{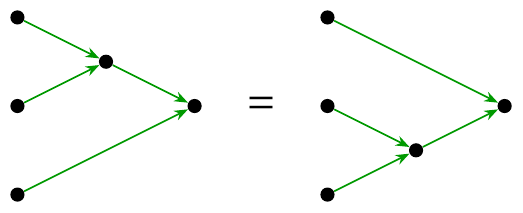}
    \caption{Equivalent subgraphs under co-associativity and associativity.}
    \label{fig:coassocgraph}
\end{figure}
    \end{itemize}
\end{Def}

        Note that the equivalence of subgraphs of Figure \ref{s shape} shares clear similarities with the Frobenius relation on 2-cobordisms as seen in Figure \ref{fig:FrobRel} and the corresponding relation on the multiplication and comultiplication in a Frobenius algebra (\ref{eqn:frobrel}).

\begin{Lem}\label{lem:impliedmoves}
	The VSDEC and DER equivalences in \ref{def:flowaxioms} imply the moves in Definition \ref{def:topologicalmoves}.
\end{Lem}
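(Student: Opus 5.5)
The plan is to check each of the three implied operations of Definition \ref{def:topologicalmoves} in turn. For each one we exhibit an explicit finite chain of VSDEC and DER moves that turns the left-hand subgraph into the right-hand one. Everything happens inside a neighbourhood of the subgraph, and the cyclic orders at the vertices involved are kept. Since Definition \ref{def:flowaxioms}(1) makes $\Omega$ constant on VSDEC/DER equivalence classes, producing such a chain is exactly what is needed. Along the way we record that every intermediate graph is still a legitimate colored cell graph: every flow vertex keeps at least one incoming and one outgoing edge, and no contracted directed edge joins an input vertex directly to an output vertex. This guarantees the moves are admissible.

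\emph{Valence 2 flow removal.} Let $v$ be a flow vertex with incoming edge $E_1$ from $u$ and outgoing edge $E_2$ to $w$. Contracting $E_1$ is a directed edge contraction, the inverse of VSDEC, and it is allowed because $v$ is a flow vertex. The merged vertex takes the type of $u$ and keeps the edge $E_2$, now running from $u$ to $w$ in the original direction. Moreover, $v$ has valence $2$, so $E_2$ is inserted into $u$'s cyclic order exactly where $E_1$ sat. The result is therefore the graph described in the statement.

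\emph{Associativity/co-associativity.} Take a vertex $v$ with three consecutive outgoing edges $E_1,E_2,E_3$. The two splittings in Figure \ref{fig:coassocgraph} are obtained by applying VSDEC to $v$ twice. First detach $\{E_1,E_2,E_3\}$ onto a new flow vertex $v'$, joined to $v$ by a directed edge. Then split $v'$ so as to separate either $\{E_1,E_2\}$ or $\{E_2,E_3\}$. Both resulting graphs contract back to the graph with $\{E_1,E_2,E_3\}$ at $v'$, so they are VSDEC-equivalent. The associativity case is identical, with all directions reversed.

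\emph{Frobenius ``z=x'' move.} We expect this to be the main obstacle, because a pure sequence of splittings and contractions can change which edges are adjacent at each vertex, and the face structure must be tracked carefully. We would proceed as follows.
\begin{enumerate}
\item Take the zig-zag path $a\to b\leftarrow c\to d$ (or the s-shaped mirror), with hinge vertices $b$ and $c$.
\item Use VSDEC at $b$ to split off a flow vertex $b'$ carrying the two adjacent path edges, and similarly split off $c'$ at $c$. The adjacency hypothesis is exactly what allows each pair to be separated by a single vertex splitting.
\item Contract the middle directed edge $b'c'$, which joins two flow vertices. This creates a valence-4 flow vertex with edges from $a$ and $c$ coming in and edges to $b$ and $d$ going out, i.e. the ``x'' shape.
\end{enumerate}
We then check, using the cyclic orders prescribed in Definition \ref{VSDEC and DER}, two things. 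First, at the new vertex the incoming and outgoing edges alternate correctly. Second, the construction leaves each of $b$ and $c$ with valence reduced by one, as claimed. The Euler characteristic count $V-E+F$ is unchanged, since one vertex and one edge are created and the face count is preserved, so the genus is invariant. If a stray parallel edge bounding a bigon appears in some configuration, we remove it by DER.
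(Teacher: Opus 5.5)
Your proposal is correct and is essentially the paper's argument. The paper gives no separate proof, since Definition \ref{def:topologicalmoves} already describes each move as a composite of vertex splittings and directed-edge contractions, which is exactly what you unpack (you also add the admissibility checks the paper leaves implicit). One side claim should be corrected, though the lemma does not need it: for a genuine z/s configuration the new valence-4 vertex has its two incoming edges adjacent (cyclic pattern in, in, out, out, as at the central vertex of Definition \ref{def:looplessnormal}), and the pattern alternates only when both hinge adjacencies lie on the same side of the path, as along a face boundary; the DER hedge is likewise unnecessary.
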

\begin{Cor}
	By the TQFT axioms of VSDEC and DER equivalences in \ref{def:flowaxioms}, the assignment $\Omega$ is constant under the topological moves defined in \ref{def:topologicalmoves}.
\end{Cor}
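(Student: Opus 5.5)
The plan is to exhibit, for each of the three implied operations in Definition \ref{def:topologicalmoves}, an explicit finite chain of VSDEC and DER moves from Definition \ref{VSDEC and DER} that carries the left-hand subgraph to the right-hand subgraph. Throughout, the rest of the graph and the cyclic orders at all vertices outside the local picture stay unchanged. Since Flow Axiom (1) makes $\Omega$ constant along such chains, the Corollary then follows at once. Each step is local, so it suffices to work with the relevant subgraph together with the half-edges leaving it.

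\textbf{Valence 2 flow removal.} Let $v$ be a flow vertex with one incoming edge $E_1$ from $u$ and one outgoing edge $E_2$ to $w$. Contracting $E_1$ is a directed edge contraction incident to the flow vertex $v$, so it is allowed by the Remark following Definition \ref{VSDEC and DER}. The merged vertex inherits the type of $u$, and $E_2$ becomes a directed edge from $u$ to $w$ with the same direction. The cyclic order at $u$ is preserved because $E_1$ is simply replaced by $E_2$ in its slot. This is exactly the graph described in the move. If $u$ is an input vertex and $w$ an output vertex, we must still contract an edge incident to $v$, and both $E_1$ and $E_2$ qualify, so there is no obstruction.

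\textbf{Associativity/co-associativity.} Take a vertex $v$ carrying three cyclically adjacent outgoing edges $E_1,E_2,E_3$. First split $v$ with the choice of $i,j$ that detaches $\{E_1,E_2,E_3\}$ onto a new flow vertex $f$ joined to $v$ by $E'$. Next split $f$ again, detaching either $\{E_1,E_2\}$ or $\{E_2,E_3\}$ onto a second flow vertex $f'$. The two resulting trees are the two sides of Figure \ref{fig:coassocgraph}. Each is VSDEC-equivalent to the intermediate graph in which $f$ carries all three edges, and hence the two are equivalent to each other. The associative case is the same argument with every direction reversed.

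\textbf{Frobenius $z=x$ move.} This is the step I expect to be the main obstacle. Label the path $a \to b \leftarrow c \to d$, where the hinges $b$ and $c$ have the path edges adjacent in their cyclic orders. Because of this adjacency, I will use VSDEC to split $b$ so that the two path edges at $b$ sit on a new flow vertex $b'$, and likewise split $c$ to obtain $c'$. Each of $b$ and $c$ drops in valence by one, as the move requires. The edge $b'c'$ has both endpoints flow vertices, so contracting it is allowed, and it produces a single flow vertex $x$ of valence 4 carrying the edges to $a$, $d$, $b$ and $c$. The delicate points are two. First, the direction conventions must be checked: $b'$ must receive an incoming edge and emit an outgoing one so that it is a genuine flow vertex. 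Second, the cyclic order at $x$ must match the $x$-shape, so that the surface and its genus are unchanged. I would verify the second point by tracing face boundaries around $b$ and $c$ before and after the move. If the directions force an edge the wrong way, I would insert a valence-2 flow vertex using the first move above to reverse the orientation locally. DER is likely needed only if the splitting creates a parallel pair bounding a face, and then one edge of that pair is removed.
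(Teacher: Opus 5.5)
Your proposal is correct and follows essentially the paper's route: the paper gives no separate proof, because Definition~\ref{def:topologicalmoves} already describes each move as a composite of VSDEC operations (contracting an edge at the valence-2 flow vertex, splitting the two hinges and contracting the edge between them, and the two ways of splitting a vertex with three adjacent outgoing edges), so Lemma~\ref{lem:impliedmoves} and the Corollary follow from Flow Axiom~(1), exactly as you set it up. The two points you leave open in the $z=x$ step close immediately---in $a\to b\leftarrow c\to d$ the new vertex $b'$ carries two incoming path edges so its new edge must point $b'\to b$, $c'$ carries two outgoing ones so its new edge points $c\to c'$, the middle edge then joins two flow vertices and may be contracted, and genus is automatic because each split adds one vertex and one edge without changing the faces---so neither DER nor your valence-2 fallback is needed (and that fallback could not reverse an orientation anyway).
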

\subsection{Disc-bounding Loops and Duplicated Input/Output Edges}
Later we will consider the graph independence of this assignment $\Omega$ to a cell graph of type $(g,n,m),$ but first we will observe a simple case of this graph independence via two useful lemmas. 

\begin{Rmk}\label{rmk:loopremoval}
	Lemmas \ref{lem:redloopremoval} and \ref{lem:blueloopaddition} follow as a direct result of \textbf{IECA 2b} and \textbf{OECA 2b} in the setting of Frobenius algebras, provided that we make the assumptions that the graphs with only one vertex and no edges corresponding to the unit and counit of the Frobenius algebra. Later, when we consider potentially infinite dimensional Nearly Frobenius algebras, these algebras may not have a unit or counit, and in this case we take Case 1 of \ref{lem:redloopremoval} as axiom rather than a direct result of our previous axioms. We note that Cases 2 and 3 of \ref{lem:redloopremoval} follow directly from Case 1.
\end{Rmk}
\begin{Lem}\label{lem:redloopremoval}
    Let $\Omega$ be a Ribbon TQFT defined for a Frobenius algebra $A$. Suppose $\gamma\in\Gamma_{g,n,m}$. We consider three cases:
    \begin{itemize}
        \item \textbf{Case 1:} There exists a loop $L$ connected to an input vertex in $\gamma$ which bounds a disc, and $\gamma\pri\in\Gamma_{g,n,m}$ is the graph obtained by removing the loop $L$. \\(Note that we are not contracting the loop $L$, but simply removing it.)
        \item \textbf{Case 2:} There exist two edges $E_1$ and $E_2$ between two distinct input vertices $i$ and $j$ in $\gamma$ such that together $E_1$ and $E_2$ bound a face. $\gamma\pri\in\Gamma_{g,n,m}$ is the graph obtained by removing (not contracting) the edge $E_2$.
        \item \textbf{Case 3:} There exist two homotopic loops $L_1$ and $L_2$ in $\gamma$ both attached to the same input vertex $i$. $\gamma\pri\in\Gamma_{g,n,m}$ is the graph obtained by removing the loop $L_2$.
    \end{itemize}
    In each of these cases, we have that 
    \be\label{eqn:reddisc}
    \Omega(\gamma)(v_1,\ldots,v_n)=\Omega(\gamma\pri)(v_1,\ldots,v_n).
    \ee
\end{Lem}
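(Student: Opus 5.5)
We prove Case 1 first; Cases 2 and 3 then follow from it, as Remark \ref{rmk:loopremoval} indicates. For Case 1, let $L$ be a loop at input vertex $i$ that bounds a disc. We apply \textbf{IECA 2} to $L$. Since $L$ bounds a disc, contracting it and splitting vertex $i$ into $i$ and $i\pri$ disconnects the graph. One component, $\gamma_2$, is the isolated vertex $i\pri$ with no edges, of type $(0,1,0)$. The other component, $\gamma_1$, is precisely $\gamma\pri$, the original graph with $L$ deleted, of type $(g,n,m)$. The disconnected case of \textbf{IECA 2} therefore gives
\[
\Omega(\gamma)(v_1,\ldots,v_n)=\sum_{a,b,k,\ell}\eta(v_i,e_ke_\ell)\eta^{ka}\eta^{\ell b}\,\Omega(\gamma\pri)(\ldots,e_a,\ldots)\,\Omega(\gamma_2)(e_b).
\]
By the base cases in Definition \ref{def:flowaxioms}, $\Omega(\gamma_2)=\eps$. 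Moving the scalar $\eps(e_b)$ inside by multilinearity, the inserted vector becomes $(\id\tensor\eps)\delta(v_i)$, written in the basis \eqref{eqn:deltabasis}. By the disc-sewing relation, equivalently the canonical expansion \eqref{eqn:canonbasis} computed in Section \ref{sec:TQFT}, this equals $v_i$. Hence $\Omega(\gamma)=\Omega(\gamma\pri)$.

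The point I expect to need the most care is checking that contracting a disc-bounding loop really produces $\gamma\pri\sqcup\{\text{pt}\}$. There are two sub-cases: the disc contains no other edges, or the disc encloses part of the graph. In the second sub-case, contracting $L$ separates the enclosed subgraph rather than producing an isolated point, so the genus-$0$ component is not a bare vertex. I would handle this by choosing the orientation of the splitting so that the empty side of $L$, the face it bounds, is the side that becomes the isolated vertex $i\pri$. In other words, the half-edges of vertex $i$ lying strictly between the two ends of $L$ on the disc side form an empty cyclic interval. This holds precisely because $L$ bounds a face. The labeling $i<i\pri$ and cocommutativity let us place $\eps$ on either tensor factor.

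For Case 2, let $E_1,E_2$ join input vertices $i\ne j$ and together bound a face. I would contract $E_1$ using \textbf{IECA 1} in both $\gamma$ and $\gamma\pri$. In $\gamma$, $E_2$ becomes a loop at the merged vertex that bounds the face formerly bounded by $E_1\cup E_2$. By Case 1 that loop can be removed, leaving exactly the contraction of $E_1$ in $\gamma\pri$. Both sides then equal $\Omega(\gamma\pri/E_1)(\ldots,v_iv_j,\ldots)$.

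For Case 3, let $L_1,L_2$ be homotopic loops at vertex $i$. Homotopic loops in a cell graph bound a disc between them, namely a face whose boundary is $L_1\cup L_2$. I would contract $L_1$ via \textbf{IECA 2}, which splits $i$ into $i,i\pri$. Under this splitting $L_2$ becomes an edge between $i$ and $i\pri$ that, together with the edge images, bounds a face, so it is either a duplicate edge or a disc-bounding loop. Applying Case 2 (or Case 1) on each side reduces $\gamma$ and $\gamma\pri$ to the same graph with the same $\delta(v_i)$ insertion, which gives the equality.
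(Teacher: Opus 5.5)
Your proposal is correct and follows the paper's proof. Case 1 uses the disconnected \textbf{IECA 2} to split off a bare input vertex, evaluated by $\eps$ and collapsed via the counit/canonical expansion. Cases 2 and 3 first contract $E_1$ (resp.\ $L_1$) so that $E_2$ (resp.\ $L_2$) becomes a disc-bounding loop, which Case 1 then removes.

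One correction in Case 3: because $L_1$ and $L_2$ together bound a face, their four half-edges are not interlaced at vertex $i$. So contracting $L_1$ leaves $L_2$ as a loop at one of the two new vertices, bounding the old bigon as a monogon; it never becomes an edge between $i$ and $i\pri$, and only your Case 1 branch actually occurs.
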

\begin{figure}[ht]
    \centering
    \includegraphics[width=0.7\linewidth]{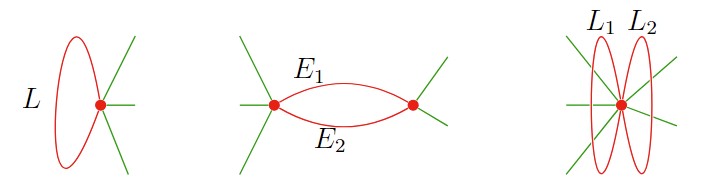}
    \caption{Removal of disc-bounding edges between input vertices.}
    \label{fig:reddisc}
\end{figure}
\begin{proof}
    In Case 1, note that contracting this disc-bounding loop $L$ at the $i$th input vertex of $\gamma$ creates a disconnected graph $(\gamma_0,\gamma\pri)\in\Gamma_{0,1,0}\times \Gamma_{g,n,m}$ where $\gamma_0$ is the simple graph with a single input vertex and no edges, and $\gamma\pri$ is the graph created by removing $L$. Here we compute the linear map associated to $\gamma$ via the edge contraction axiom \textbf{IECA 2b}.
    \begin{align*}
        \Omega(\gamma)(v_1,\ldots,v_n) &= \sum_{a,b,k,\ell}\eta(v_i,e_ke_\ell)\eta^{ka}\eta^{\ell b}\Omega(\gamma_0)(e_a)\Omega(\gamma\pri)(v_1,\ldots,v_{i-1},e_b,v_{i+1},\ldots,v_n)\\
        &=\sum_{a,b,k,\ell}\eta(v_i,e_ke_\ell)\eta^{ka}\eta^{\ell b}\eta(1,e_a)\Omega(\gamma\pri)(v_1,\ldots,v_{i-1},e_b,v_{i+1},\ldots,v_n)\\
        &=\sum_{b,\ell}\eta(v_i,1\cdot e_\ell)\eta^{\ell b}\Omega(\gamma\pri)(v_1,\ldots,v_{i-1},e_b,v_{i+1},\ldots,v_n)\\
        &=\sum_{b,\ell}\eta(v_i,e_\ell)\eta^{\ell b}\Omega(\gamma\pri)(v_1,\ldots,v_{i-1},e_b,v_{i+1},\ldots,v_n)\\
        &=\Omega(\gamma\pri)(v_1,\ldots,v_{i-1},v_i,v_{i+1},\ldots,v_n).
    \end{align*}
    For Case 2, note that contracting the edge $E_1$ turns the edge $E_2$ into a disc-bounding loop. This reduces to Case 1 in which we can simply remove this loop $E_2$. In this way, the graph obtained by contracting $E_1$ and removing $E_2$ is the same as the graph obtained by removing $E_2$ to reach the graph $\gamma\pri$ and then contracting $E_1$. Because of this, \ref{eqn:reddisc} holds.\\
    For Case 3, note that contracting the loop $L_1$  turns the loop $L_2$ into a disc-bounding loop. This reduces to Case 1 in which we can simply remove this loop $L_2$. As before, the graph obtained by contracting $L_1$ and then removing $L_2$ is the same as the graph obtained by removing $L_2$ to reach the graph $\gamma\pri$ and then contracting $L_1$. As such, \ref{eqn:reddisc} still holds.
\end{proof}
We now have a similar result regarding the addition of disc-bounding loops and faces to output vertices of a colored cell graph.

\begin{Lem}\label{lem:blueloopaddition}
    Suppose $\gamma\in\Gamma_{g,n,m}$. We consider three cases:
    \begin{itemize}
        \item \textbf{Case 1:} $\gamma\pri\in\Gamma_{g,n,m}$ is the graph obtained by adding a disc-bounding loop $L$ to the $i$th output vertex of $\gamma$. (Note that we are not changing the vertices of $\gamma$, but simply adding a loop.)
        \item \textbf{Case 2:} For some edge $E_1$ between two distinct output vertices $i$ and $j$ in $\gamma$, then  $\gamma\pri\in\Gamma_{g,n,m}$ is the graph obtained by adding the edge $E_2$ between the same two vertices such that together $E_1$ and $E_2$ bound a face.
        \item \textbf{Case 3:} For some loop $L_1$ connected to the $i$th output vertex of $\gamma$, then $\gamma\pri\in\Gamma_{g,n,m}$ is the graph obtained by adding a loop $L_2$ to the same vertex which is homotopic to $L_1$.
    \end{itemize}
    In each of these cases, we have that 
    \be\label{eqn:bluedisc}
    \Omega(\gamma)(v_1,\ldots,v_n)=\Omega(\gamma\pri)(v_1,\ldots,v_n).
    \ee
\end{Lem}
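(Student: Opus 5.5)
The plan is to mirror the proof of Lemma \ref{lem:redloopremoval}, replacing the input edge-contraction axiom \textbf{IECA 2b} by the output edge-construction axiom \textbf{OECA 2b}, and the counit by the unit. This is the ``dual'' direction suggested by Remark \ref{rmk:loopremoval}. Throughout, $A$ is assumed to be a Frobenius algebra, so that the one-vertex, edgeless output graph is available and is assigned the unit $\mathbf{1}$ by the base cases in Definition \ref{def:flowaxioms}.

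For \textbf{Case 1}, I would view $\gamma\pri$ as obtained by an \textbf{OECA 2b} construction. Take the disconnected graph $(\gamma,\gamma_0)$, where $\gamma_0\in\Gamma_{0,0,1}$ is the single output vertex with no edges. Merge the $i$th output vertex of $\gamma$ with the vertex of $\gamma_0$, and add a loop $L$ at the merged vertex. Contracting $L$ in $\gamma\pri$ recovers $(\gamma,\gamma_0)$. Since $\gamma_0$ is a sphere with one marked vertex, the added loop $L$ bounds a disc, and the resulting graph is exactly $\gamma\pri$. The genus is $g+0=g$ and the number of output vertices is $m+1-1=m$. Axiom \textbf{OECA 2b} then gives
\[
\Omega(\gamma\pri)(v_1,\ldots,v_n)=m\circ_{i,m+1\mapsto i}\bigl(\Omega(\gamma)(v_1,\ldots,v_n)\tensor \mathbf{1}\bigr),
\]
and multiplying the $i$th tensor factor by the unit leaves it unchanged, so $\Omega(\gamma\pri)=\Omega(\gamma)$.

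For \textbf{Case 2}, I would first apply \textbf{OECA 1} in reverse. The key observation is that both $\gamma$ and $\gamma\pri$ arise from graphs in which vertices $i$ and $j$ have been merged. Concretely, let $\gamma''$ be the graph obtained from $\gamma$ by contracting $E_1$, so that $\Omega(\gamma)=\delta\circ_i\Omega(\gamma'')$. In $\gamma\pri$, contracting $E_1$ turns $E_2$ into a disc-bounding loop at the merged output vertex. That graph is therefore $\gamma''$ with a disc-bounding loop added, and by Case 1 it has the same $\Omega$ as $\gamma''$. Applying \textbf{OECA 1} to split the merged vertex back along $E_1$ then gives $\Omega(\gamma\pri)=\delta\circ_i\Omega(\gamma'')=\Omega(\gamma)$. \textbf{Case 3} is handled the same way. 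Here one uses \textbf{OECA 2a} (or \textbf{OECA 2b} if $L_1$ is separating) to express $\gamma$ and $\gamma\pri$ as constructions from the graph obtained by contracting $L_1$. Under that contraction, $L_2$ becomes a disc-bounding loop, or a pair of duplicate edges, which is handled by Cases 1 and 2.

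The main obstacle is the combinatorial compatibility claim used in Cases 2 and 3: that ``add $E_2$, then contract $E_1$'' equals ``contract $E_1$, then add a disc-bounding loop''. One must also check that the construction order is consistent with the ribbon structure, that is, where the new loop sits in the cyclic order. I would argue this by looking at the face bounded by $E_1\cup E_2$ (respectively $L_1\cup L_2$), which collapses to a disc under the contraction. The labeling conventions $j<j\pri$ in \textbf{OECA 1} need to be tracked so that the output slot positions match.
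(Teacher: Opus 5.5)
Your proposal is correct and follows essentially the same route as the paper. Case 1 goes through \textbf{OECA 2b} applied to $(\gamma,\gamma_0)$ with $\Omega(\gamma_0)=\mathbf{1}$, and Cases 2 and 3 realize $E_1$ (resp.\ $L_1$) as an \textbf{OECA 1} (resp.\ \textbf{OECA 2}) construction, after whose contraction $E_2$ (resp.\ $L_2$) becomes a disc-bounding loop, so Case 1 applies. One small correction: the alternative ``or a pair of duplicate edges'' in your Case 3 never arises, because loops bounding a bigon have both half-edges of $L_2$ on the same side of $L_1$, so only Case 1 is needed, exactly as in the paper.
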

\begin{figure}[ht]
    \centering
    \includegraphics[width=0.7\linewidth]{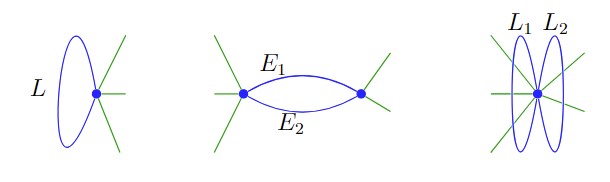}
    \caption{Addition of edge-bounding edges between output vertices}
    \label{fig:bluedisc}
\end{figure}
\begin{proof}
    In Case 1, we see that adding this disc-bounding loop $L$ at the $i$th output vertex of $\gamma$ is the same as taking the disconnected graph $(\gamma_0,\gamma)\in\Gamma_{0,0,1}\times\Gamma_{g,n,m}$ where $\gamma_0$ is the simple graph with a single output vertex and no edges and applying the edge construction axiom \textbf{OECA 2b} to the vertex of $\gamma_0$ and the $i$th vertex of $\gamma$. We use this axiom to compute the linear map associated to $\gamma\pri$.
    \begin{align*}
        \Omega(\gamma\pri)(v_1,\ldots,v_n)&=m\circ_{i,m+1\mapsto i}\Omega(\gamma)(v_1,\ldots,v_n)\otimes \Omega(\gamma_0)\\
        &=m\circ_{i,m+1\mapsto i}\Omega(\gamma)(v_1,\ldots,v_n)\otimes 1\\
        &=\Omega(\gamma)(v_1,\ldots,v_n).
    \end{align*}
    For Case 2, note that starting from a graph that splits an output vertex into vertex $i$ and vertex $j$ with an edge $E_1$ between them via \textbf{OECA 1} to reach the graph $\gamma$ and then adding and edge $E_2$ such that $E_1$ and $E_2$ bound a face is the same as adding $E_2$ as a disc-bounding loop and then splitting the vertex and adding $E_1$ via \textbf{OECA 1} to reach the graph $\gamma\pri$. Therefore, \ref{eqn:bluedisc} holds in this case.\\
    For Case 3, note that the loop $L_1$ can be created via \textbf{OECA 2} by combining to separate vertices into a single vertex with this loop $L_1$. If instead, one were to add a disc-bounding loop $L_2$ to one of these vertices and then perform \textbf{OECA 2}, then this gives the same graph as adding loop $L_1$ via \textbf{OECA 2} and then adding the homotopic loop $L_2$. Hence, this reduces to Case 1 and \ref{eqn:bluedisc} holds in this case as well.
\end{proof}
\begin{Rmk}
    Note that each of the three cases in Lemmas \ref{lem:redloopremoval} and \ref{lem:blueloopaddition} correspond to the removal or addition of valence 1 and 2 vertices in the ribbon graph dual to the colored cell graph $\gamma$.
\end{Rmk}
\section{Results on Colored Cell Graphs}\label{Section 6}
In this section, we show the graph independence of the 2D TQFT associated to colored cell graphs of type $(g,n,m)$. That is to say, if both $\gamma$ and $\gamma\pri$ are cell graphs of genus $g$ with $n$ input vertices and $m$ output vertices, then they should give rise to the same multilinear map $\Atn\longrightarrow\Atm$. To achieve this, we show that any 2-colored cell graph is equivalent via the edge contraction and construction axioms to some canonical form, potentially up to some small modifications. After formulating the map associated to a graph in this canonical form, we then see how we can reconstruct the map associated to the original graph via these axioms.

\subsection{Genus 0 Case}\label{gen zero case}
\begin{Def}\label{def:looplessnormal}
	(Normal form) We say a colored cell graph $\gamma\in\Gamma_{0,n,m}$ is in \textbf{normal form} if it has the following criteria: \begin{itemize}
		\item a single central flow vertex with $2$ incoming and $2$ outgoing edges,
		\item $n-2$ flow vertices with $2$ incoming edges and $1$ outgoing edge, where the incoming edges all connect either to vertices of this same type or input vertices, and the outgoing edge either connects to a vertex of the same type or the central flow vertex,
		\item $m=2$ flow vertices with $1$ incoming edge and $2$ outgoing edges, where the outgoing edges all connect either to vertices of this same type or output vertices, and the incoming edge either connects to a vertex of the same type or the central flow vertex,
		\item all input and output vertices have valence $1$.
	\end{itemize} That is, a genus $0$ cell graph is in normal form if it takes the form of the graph in Figure \ref{fig:bipartitetree}.
\end{Def}\begin{figure}[h]
\centering
\includegraphics[width=0.3\linewidth]{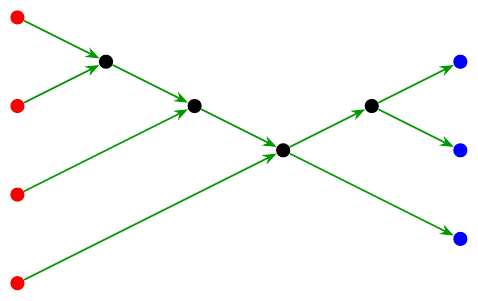}

\caption{A colored cell graph with $4$ input vertices, $3$ output vertices, $4-2$ `in-in-out` flow vertices, $3-2$ `in-out-out' flow vertices, and one valence $4$ flow vertex.}
\label{fig:bipartitetree}
\end{figure}
\begin{Def}\label{def:starshaped}
	(Star-shaped graph) We say a colored cell graph $\gamma\in\Gamma_{0,n,m}$ is \textbf{star-shaped} if each input and output vertex has valence $1$, and there is a single flow vertex connected to all other vertices. (See the graphs in figure \ref{fig:reorderleaves})
\end{Def}

\begin{Lem}\label{lem:bipartitetreeform}
    Consider a colored cell graph $\gamma\in\Gamma_{0,n,m}$ which is in normal form as defined in \ref{def:looplessnormal}. Then, 
    \be
    \Omega(\gamma)(v_1,v_2,\ldots,v_n)=\delta^{m-1}(v_1v_2\cdots v_n).\ee
\end{Lem}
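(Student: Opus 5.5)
The plan is to evaluate $\Omega(\gamma)$ by splitting the normal-form tree at its central flow vertex into an ``input half'' and an ``output half'', collapsing each half with the flow axioms, and reading the result off the base cases of Definition \ref{def:flowaxioms}. I assume $n,m\geq 2$, so that the central valence-$4$ vertex exists. The degenerate cases $n=1$ or $m=1$ are handled separately at the end.

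First, the input half. The $n-2$ ``in-in-out'' flow vertices, together with the two incoming edges of the central vertex, form a binary tree whose leaves are the $n$ input vertices.
\begin{itemize}
\item The key local move is a subgraph with two valence-$1$ input vertices $i,j$ feeding a flow vertex $w$ with one outgoing edge. I contract the directed edge from input $i$ to $w$ via VSDEC; the resulting vertex is an input vertex, so $i$ now sits where $w$ was.
\item Input vertex $i$ is now joined to input vertex $j$ by an edge. Applying \textbf{IECA 1} to this red edge contracts it and replaces the pair of labels $(v_i,v_j)$ by $v_iv_j$.
\item The result is again a normal-form graph, now with $n-1$ inputs, still valence $1$, and the same outputs. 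Inducting on $n$ reduces to the case of two inputs feeding the central vertex.
\item One more VSDEC contraction followed by \textbf{IECA 1} merges these to a single input carrying $v_1v_2\cdots v_n$. Associativity and commutativity (Definition \ref{def:topologicalmoves}) guarantee that the order of merges does not matter.
\end{itemize}

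Second, the output half. The central vertex now has one incoming edge from the single input and two outgoing edges, and the ``in-out-out'' vertices form a binary co-tree ending in the $m$ output vertices. I run the dual induction, reading \textbf{OECA 1} backwards: whenever a flow vertex has two outgoing edges to valence-$1$ outputs $j,j\pri$, I contract one directed edge by VSDEC. This produces an output vertex joined by a blue edge to the other output, and \textbf{OECA 1} says $\Omega$ of that graph equals $\delta\circ_j$ applied to $\Omega$ of the graph with that blue edge contracted. Iterating $m-1$ times leaves the graph with one input joined by one edge to one output. By the base cases that graph is the identity, so $\Omega(\gamma)=\delta^{m-1}(v_1\cdots v_n)$, using coassociativity to identify the iterated compositions.

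The main obstacle is bookkeeping for direction and color under VSDEC. A contraction is only allowed when the edge touches a flow vertex, and the surviving vertex inherits the non-flow color, so I must check at each step that this produces exactly the red or blue edge configuration required by \textbf{IECA 1}/\textbf{OECA 1}. I would also need to verify that the cyclic orders remain genus $0$, which is automatic since all graphs involved are trees.

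For the degenerate cases I would argue as follows. When $n=1$ or $m=1$ there is no valence-$4$ center. I would instead use Valence 2 Flow Removal, or the base cases for multiplication and comultiplication, directly at the final step. With $n=m=1$ the graph is just the identity graph.
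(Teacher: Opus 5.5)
Your proposal is correct and follows essentially the same route as the paper: collapse the input binary tree by VSDEC contractions plus \textbf{IECA 1} to a single input carrying $v_1\cdots v_n$, then handle the output co-tree with VSDEC and \textbf{OECA 1}, treating $n=1$ or $m=1$ separately (the paper builds the outputs up from the comultiplication graph, while you run the same steps in reverse down to the identity graph). One bookkeeping point: your last \textbf{IECA 1} merge absorbs the central flow vertex into the input vertex, so before running the output half you must either re-split that vertex by VSDEC or stop at the comultiplication base graph instead of the identity graph.
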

\begin{proof}
    First note that if both $n=1$ and $m=1$ then this graph is exactly the basic graph $\gamma$ in \ref{fig:simplegraphs} for which $\Omega(\gamma)=\id$. 
    
    Now, if $n\geq 2$ and $m=1$, then we can successively contract flow edges and input edges via the directed edge contraction axiom and \textbf{IECA1} until we reach the simple graph with two input vertices and one output vertex. Each time we use \textbf{IECA1}, we multiply two of the inputs, and the map associated to this final graph is just the multiplication map. Following this process, we see the map associated to the original graph is simply the multiplication of the $n$ inputs $v_1\cdots v_n$.
           \begin{figure}[ht]
            \centering
            \includegraphics[width=.8\linewidth]{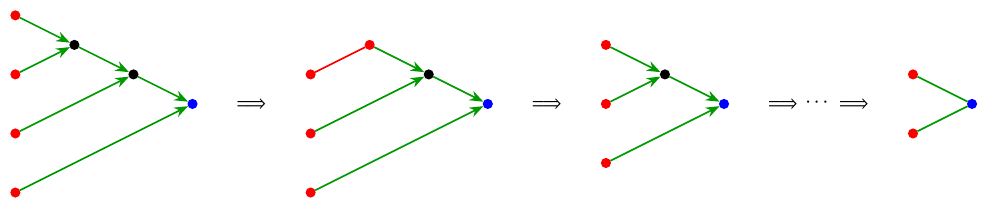}
            
            \caption{Proof of the $m=1$ case.}
            \label{fig:m1treeproof}
        \end{figure}\\
    Conversely, if $n=1$ and $m\geq 2$, then we can successively create flow edges and vertices and output edges and vertices via the vertex splitting axiom and \textbf{OECA1} starting from the simple graph associated to the comultiplication map. Following this process, each use of \textbf{OECA1} will add one output vertex and contribute an additional comultiplication to the resulting map. Therefore, the map associated to the original graph is the $(m-1)$-fold comultiplication of the single input, $\delta^{m-1}(v).$
    \begin{figure}[ht]
        \centering
        \includegraphics[width=.8\linewidth]{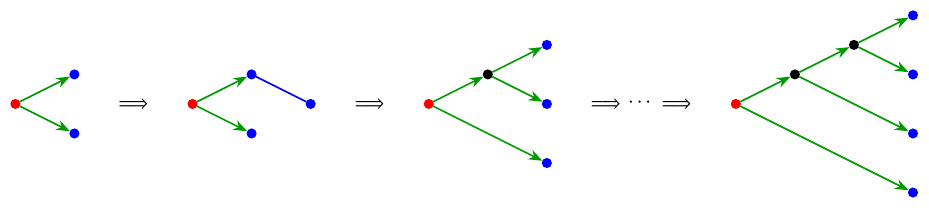}
        \caption{Proof of the $n=1$ case.}
        \label{fig:n1treeproof}
    \end{figure}\\
    Now, suppose both $n\geq 2$ and $m\geq 2$ and the algebra elements assigned to the input vertices are $v_1,\ldots, v_n$. Just as in the $m=1$ case, we can succesively contract directed edges and input edges until the graph is reduced to only one input vertex with the element $v_1v_2\cdots v_n$. This now reduces to the previous case where $n=1, m\geq2$ and so the associated map to this graph is $\delta^{m-1}(v_1v_2\cdots v_n)$.
    \begin{figure}[ht]
        \centering
        \includegraphics[width=\linewidth]{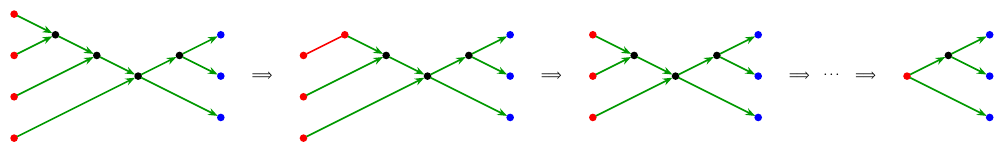}
        \caption{Reducing to the $n=1$ case.}
        \label{fig:fulltreeproof}
    \end{figure}
\end{proof}
We now show that every 2-colored cell graph with genus 0 is equivalent to a graph of the form described in Lemma \ref{lem:bipartitetreeform}.
\begin{Lem}\label{lem:reorderleaves}
    Let $\gamma\in \Gamma_{0,n,m}$ be a colored cell graph with $n$ input vertices and $m$ output vertices, all of valence 1, and a single central flow vertex connected to every other vertex. Via a series of edge contraction and vertex splitting moves, $\gamma$ is equivalent to a graph of the same form but with a cyclic order on the flow vertex such that all edges connecting to an input vertex are next to each other.
    \begin{figure}[ht]
        \centering
        \includegraphics[width=0.4\linewidth]{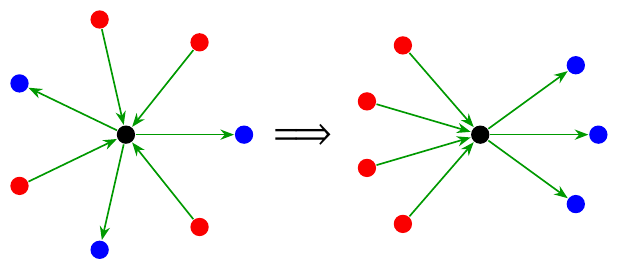}
        \caption{Reordering the leaves in a star-shaped graph}
        \label{fig:reorderleaves}
    \end{figure}
\end{Lem}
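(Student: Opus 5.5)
The basic move is an adjacent transposition. Reordering the leaves only requires swapping an input leaf with an output leaf that is adjacent to it in the cyclic order at the central flow vertex. So I would argue by induction on the number of such ``inversions'': the number of input leaves that lie in the cyclic order between two output leaves, measured against a fixed block arrangement. It therefore suffices to show that the star-shaped graph is equivalent, via VSDEC and DER moves (Definition \ref{VSDEC and DER}), to the star-shaped graph in which one adjacent pair (input edge $E_a$, output edge $E_b$) has its cyclic positions exchanged and everything else is unchanged.

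To build the transposition, I would first split the central flow vertex $c$ using VSDEC. The new flow vertex $c'$ receives the two adjacent edges $E_a$ and $E_b$, and $c$ keeps the rest. Because of the directions, $c'$ has one incoming edge ($E_a$, coming from an input vertex) and one outgoing edge ($E_b$, going to an output vertex). The new edge $E'$ between $c$ and $c'$ must therefore point both in and out, which is impossible. So instead I would perform two splittings. First split off $E_a$ with a new edge $F_1$ directed $c\to c_1$, and then $E_b$ with a new edge $F_2$ directed $c_2\to c$. This is exactly the shape of the Frobenius ``z=x'' move of Definition \ref{def:topologicalmoves}, which Lemma \ref{lem:impliedmoves} provides. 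Applying z=x in the other direction lets the path input $\to$ flow $\to$ flow $\to$ output be redistributed, so that the hinge attaches on the opposite side.

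A cleaner alternative is to use Valence 2 Flow Removal together with the genus-$0$ planarity of the graph. Split $c$ into $c_1$, carrying $E_a$ plus a new edge, and the rest. Because the surface is a sphere, the input leaf $a$ can be slid around the leaf $b$: contract the directed edge from $a$'s side onto $c_2$ instead, again using VSDEC. The ribbon structure then changes only by moving $E_a$ past $E_b$, and the genus stays $0$.

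The main obstacle is checking that this sliding does not change the topological type, and that every intermediate graph satisfies the direction constraints: each flow vertex needs at least one incoming and one outgoing edge, and no contracted edge may join an input directly to an output. For this I would explicitly track the cyclic orders at $c$, $c_1$, $c_2$ through the z=x move, and use an Euler-characteristic count, namely
\[
V-E+F=2,
\]
with the number of vertices and edges each increasing by one per split. From this count I would confirm the genus stays $0$. Iterating these transpositions gathers all input edges consecutively, which proves the lemma.
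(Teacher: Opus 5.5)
There is a genuine gap: you never exhibit a legal sequence of moves that actually exchanges $E_a$ and $E_b$.

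Your first construction fails in three ways.
\begin{enumerate}
\item The orientations are reversed. $F_1\colon c\to c_1$ leaves $c_1$ with two incoming edges and no outgoing one, and $F_2\colon c_2\to c$ leaves $c_2$ with no incoming edge, so neither is a legal flow vertex.
\item Once the orientations are corrected, $c_1$ and $c_2$ are valence-$2$ flow vertices that Valence 2 Flow Removal simply undoes. The path $a\to c_1\to c\to c_2\to b$ is coherently directed rather than alternating, so the ``z=x'' move does not apply.
\item $F_1,F_2$ occupy exactly the slots at $c$ that $E_a,E_b$ did, so nothing has been reordered.
\end{enumerate}
The claim that motivated this detour is also false: $c'$ already has $E_a$ incoming and $E_b$ outgoing, so $E'$ could be oriented either way.

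The ``sliding on the sphere'' alternative is not a move of the axiom system. Your Euler-characteristic check is beside the point, because every star-shaped graph has one face and genus $0$ whatever its cyclic order. In fact, if you only split flow vertices and contract edges between flow vertices, every intermediate graph is a planar tree of flow vertices with the leaves attached. The cyclic order in which the leaf edges are met along its unique face is then invariant. So any proof must at some point contract a leaf edge and thereby change the color of the central vertex, which your proposal never carries out concretely.

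That is the missing idea, and it is exactly the paper's argument. Contract the leaf edge $E_a$; this is legal, since it joins an input vertex to a flow vertex. The center becomes an input vertex whose cyclic order is the old one with $E_a$ deleted. Now apply VSDEC to this input vertex with $i=j$ at whatever slot you like, say immediately after a fixed input leaf $E_1$. Let the split-off valence-$1$ vertex be the input vertex, which returns the center to a flow vertex. This moves one input leaf to an arbitrary position in a single step, so repeating it for every input leaf other than $E_1$ gathers them all together. No inversion count or adjacent transpositions are needed, although this contract--resplit move would also supply the transposition your induction was missing.
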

\begin{proof}
Let $\gamma\in\Gamma_{0,n,m}$ be a colored cell graph of the described form. Note that if we contract any edge, then the central vertex will become either an input vertex or an output vertex, depending on which edge we contract. Without loss of generality, we will contract edges connecting the central vertex to input vertices one at a time.\\
Choose one fixed input vertex and designate it $v_1$ and its edge $E_1$ connecting to the central vertex with edges in cyclic order $E_1,E_2,\ldots,E_d$. Now choose another input vertex $v_k$ and contract its edge $E_k$. The central vertex is now an input vertex with valence $d-1$ and edges in cyclic order $E_1,E_2,\ldots,\widehat{E_k},\ldots,E_d$. Now, we perform a vertex splitting move on this new central with $i=j=1$. This creates a new valence 1 input vertex which immediately follows the vertex $v_1$ in the cyclic order around the central vertex and returns the central vertex to being a flow vertex. If we repeat this process with all other input vertices, then the resulting graph will have all input vertices next to each other in the cyclic order of edges around the central vertex.
    \begin{figure}[ht]
        \centering
        \includegraphics[width=0.6\linewidth]{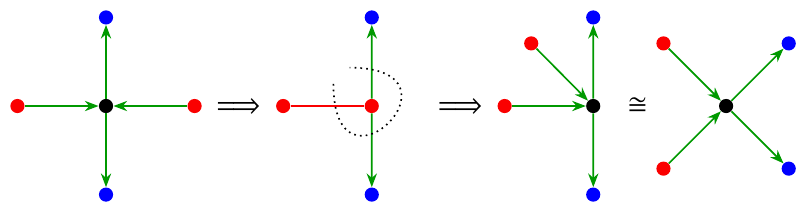}
        \caption{Repositioning an input leaf via edge contraction and vertex splitting}
        \label{fig:untangleoneleaf}
    \end{figure}
\end{proof}

\begin{Lem}\label{lem:staristree}
    Let $\gamma\in\Gamma_{0,n,m}$ be a star-shaped graph of the form of Lemma \ref{lem:reorderleaves} with $n$ valence 1 input vertices and $m$ valence 1 output vertices all connected to a single central flow vertex with the edges connected to input vertices all next to each other in the cyclic order. Then $\gamma$ is equivalent under the axioms to a graph satisfying the conditions of Lemma \ref{lem:bipartitetreeform}.
\end{Lem}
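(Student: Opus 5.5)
We start from the star-shaped graph $\gamma$ of Lemma \ref{lem:reorderleaves}. Its single central flow vertex $c$ has cyclic order $E_1,\ldots,E_n,F_1,\ldots,F_m$, where the $E_i$ run to the input leaves $v_1,\ldots,v_n$ and the $F_j$ run to the output leaves $u_1,\ldots,u_m$. Since the input edges are consecutive, we can split $c$ twice using VSDEC and then build binary trees on each side.

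First we separate the inputs from the outputs. Apply a vertex splitting move at $c$ with the interval consisting of $E_3,\ldots,E_n$. This produces a new flow vertex $c'$ carrying $E_3,\ldots,E_n$, together with a new edge $E'$ directed from $c'$ toward $c$. Now $c$ has incoming edges $E_1,E_2,E'$ in cyclic order, and all edges into $c$ are incoming. We then split $c$ again with the interval $F_1,\ldots,F_m$, which creates an edge from $c$ to a new flow vertex $c''$. After this step, $c$ has exactly $3$ incoming edges and $1$ outgoing edge. Since the leaves are adjacent in cyclic order, both moves are legitimate instances of VSDEC with $1\le i\le j\le d$; this adjacency is exactly what Lemma \ref{lem:reorderleaves} provides. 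The second split could instead be applied symmetrically on the output side.

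Next we binarize. At $c'$, which has $n-2$ incoming input edges and one outgoing edge, we repeatedly split off two adjacent incoming edges into a new flow vertex. Each split produces a vertex with two incoming edges and one outgoing edge, until every vertex is of in-in-out type. The output side at $c''$ is treated in the same way, giving in-out-out vertices whose leaves are the output vertices. We then arrange for the required central vertex with two incoming and two outgoing edges. To do this we take the in-in-out vertex adjacent to the in-out-out vertex $c''$ and contract the directed edge between them. This is allowed because both endpoints are flow vertices, and the result is a flow vertex with two incoming and two outgoing edges. Counting gives $n-2$ in-in-out vertices and $m-2$ in-out-out vertices, which matches Definition \ref{def:looplessnormal}.

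The degenerate cases $n=1$ or $m=1$ are handled directly. When $m=1$ we have no outgoing split and can use valence-2 flow removal from Lemma \ref{lem:impliedmoves}, and similarly when $n=1$. When $n=m=1$, removing the valence-2 flow vertex gives the identity graph.

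The main obstacle is the bookkeeping in VSDEC. We must check that every split respects the direction conventions: each flow vertex needs at least one incoming and one outgoing edge, and contractions are only allowed at flow vertices. We must also check that the splits preserve genus $0$, which holds because every move splits at a consecutive interval of the cyclic order. Since the assignment $\Omega$ is invariant under VSDEC by Definition \ref{def:flowaxioms}, the resulting normal-form graph has the same $\Omega$ as $\gamma$, which is what the statement requires.
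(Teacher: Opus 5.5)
This is essentially the paper's argument: both of you build the in-in-out tree on the input side and the in-out-out tree on the output side by repeated \textbf{VSDEC} splits at the central flow vertex, using the adjacency supplied by Lemma \ref{lem:reorderleaves}. The paper peels pairs off the centre directly ($n-2$ times on the input side, $m-2$ on the output side), so the centre is left with valence $4$ and your detour through the two hubs $c',c''$ and a final contraction is not needed.

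There is one bookkeeping slip. After your two initial splits, $c$ is 3-in-1-out, so it must be split once more before contracting the edge to $c''$ gives a 2-in-2-out vertex; your final count already assumes this. Similarly, for $n=3$ the valence-2 vertex $c'$ must be removed, and for $n=2$ the first split is omitted.
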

\begin{proof}
    In this star-shaped graph, the central vertex has valence $n+m$ while in the tree-shaped graph of Lemma \ref{lem:bipartitetreeform}, the central nexus vertex has valence 4. We can therefore achieve this transformation from the star-shaped graph to the tree-shaped graph by splitting the central vertex $n+m-4$ times, each time splitting off two edges at a time effectively reducing the valence of the central vertex by 1.

   \noindent Consider the position in the cyclic order around the central vertex where there is an edge connecting an input vertex adjacent to an edge connecting an output vertex (technically there are two such positions, but for our sake we will choose the one where the output edge is further clockwise than the input edge). Begin by splitting off the first two edges to input vertices, creating a new flow vertex. Next, split off from the central vertex the edge connected to the newest flow vertex and the next adjacent edge to an input vertex. After performing this same operation $n-2$ times, we have created a tree-shaped subgraph connecting the input vertices to the central vertex where each of the $n-2$ flow vertices created has exactly two incoming edges and one outgoing edge. 

   \noindent Next, split from the central vertex the first two edges to output vertices creating a new flow vertex. Then split off from the central vertex the edge connected to the newest flow vertex and the next adjacent edge to an output vertex. Again after performing this operation $m-2$ times, we have created a tree-shaped subgraph connecting the central vertex to the output vertices. Here each of the $m-2$ flow vertices has exactly one incoming edge and two outgoing edges. Since each of these splittings reduces the valence of the central vertex by 1, that vertex now has valence 4 and the graph is exactly of the form described in Lemma \ref{lem:bipartitetreeform}.
\end{proof}
\begin{figure}[ht]
    \centering
    \includegraphics[width=.8\linewidth]{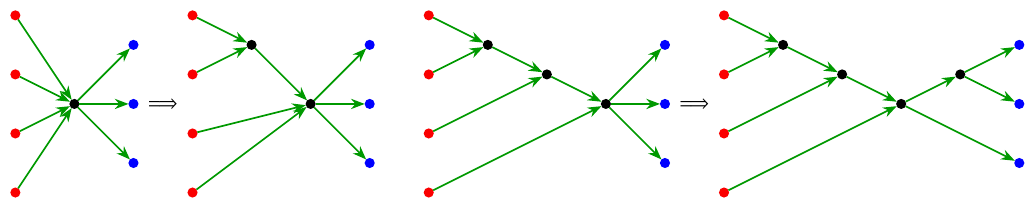}
    \caption{Vertex splitting moves converting a star-shaped graph to a tree-shaped graph.}
    \label{fig:spoketotree}
\end{figure}
\noindent We will now consider \textbf{bipartite} cell graphs, that is to say cell graphs with no flow vertices and only edges which directly connect an input vertex to an output vertex. We aim to show that every genus 0 bipartite cell graph is equivalent to a graph of the above canonical form, but first we make a simplification on the number of faces found in a particular bipartite cell graph. Recall that we can compute the genus of a cell graph $\gamma$ via the Euler characteristic $\chi(\gamma)$:
\be
\chi(\gamma)=V(\gamma)-E(\gamma)+F(\gamma)=2-2g,
\ee
where $V(\gamma), E(\gamma),$ and $F(\gamma)$ denote the number of vertices, edges, and faces of the graph, respectively. Both the tree-shaped graph described by Lemma \ref{lem:bipartitetreeform} and the star-shaped graph described in Lemma \ref{lem:reorderleaves} have only one face (the former has $2n+2m-3$ vertices and $2n+2m-2$ edges, the latter has $n+m+1$ vertices and $n+m$ edges, and in either case $2-2g=2$, requiring that the graph has only one face). Because of this, we would like to only consider bipartite cell graphs which have only one face to show that they are also equivalent to this canonical tree-shaped form. To do this, we employ the use of the Duplicate Edge Removal axiom.

\begin{Lem}\label{lem:onlyonefacebip}
    Let $\gamma\in\Gamma_{0,n,m}$ be a connected bipartite cell graph. Then there exists some spanning tree $\gamma\pri\in\Gamma_{0,n,m}$ which is a subgraph of $\gamma$ and contains only one face such that $\gamma$ and $\gamma\pri$ are equivalent under the \textbf{VSDEC} and \textbf{DER} axioms.
\end{Lem}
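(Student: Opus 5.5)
Since $\gamma$ has genus $0$, Euler's formula gives $V-E+F=2$. A spanning tree of $\gamma$ has $V$ vertices and $V-1$ edges, and hence exactly one face. So the plan is to remove edges from $\gamma$ one at a time using \textbf{DER} until only one face remains. Each removal lowers both $E$ and $F$ by one and keeps $V$ and $g=0$ fixed. Connectivity is preserved at every stage, because an edge lying on the boundary of two distinct faces is never a bridge. Since no vertex is removed, the final graph $\gamma\pri$ still has all $n$ input and $m$ output vertices, is bipartite, and is a spanning tree of $\gamma$.

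The key step is to show that whenever $F(\gamma)\ge 2$, some face can be reduced to a digon that \textbf{DER} then removes. Take a face $f$ that is not the unique face of a tree. Its boundary walk in the bipartite graph alternates between input and output vertices, so it has even length $2k$. If $k=1$, then $f$ is bounded by two edges $E_1,E_2$ joining the same input vertex to the same output vertex. This is exactly the hypothesis of \textbf{DER}, and we delete $E_2$.

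If $k\ge 2$, I would shrink $f$ using \textbf{VSDEC} while staying inside the equivalence class. First choose an input vertex $u$ on $\partial f$, with consecutive boundary edges $u\to w_1$ and $u\to w_2$ leading to output vertices. Split $u$ so that these two edges, adjacent in the cyclic order at $u$ because they bound $f$, move to a new flow vertex $x$ joined to $u$ by a directed edge $u\to x$. Repeat this at another input vertex $u'$ of $f$, and then contract directed edges through flow vertices, which \textbf{VSDEC} allows. Used together with the valence-$2$ flow removal and the Frobenius ``z=x'' move of Lemma~\ref{lem:impliedmoves}, these moves should replace a $2k$-gon face by a $2(k-1)$-gon face plus a digon between flow vertices. \textbf{DER} then removes the digon, and we can contract back to a bipartite graph.

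An alternative, and I think cleaner, route is to run DER in the ``reverse'' direction. Adding a duplicate edge parallel to an existing edge inside a face splits that face into a digon and a face of the same length. This is not obviously helpful, so I would try the splitting strategy first.

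The main obstacle is this reduction of a face of length $\ge 4$. We must check that the \textbf{VSDEC} moves respect the coloring constraints: contracted edges must touch a flow vertex, and every flow vertex needs both in- and out-edges. We must also check that returning to a bipartite graph leaves a graph which is literally a subgraph of the original $\gamma$, rather than merely an equivalent one.

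If preserving the literal subgraph property proves awkward, I would weaken the goal. It is enough to produce a one-face genus-$0$ bipartite graph (a tree) equivalent to $\gamma$ with the same colored vertices, since the later argument only needs a one-face representative. I would then finish by induction on $F(\gamma)$, noting that each round of moves strictly reduces the number of faces.
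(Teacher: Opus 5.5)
\textbf{The gap is the case $2k\ge 4$.} Your outer framework is the paper's: Euler count $V-E+F=2$, one \textbf{DER} deletion per surplus face, even alternating face boundaries, the direct $k=1$ case, and induction on $F$. But the case $2k\ge 4$ carries all the content, and you leave it at ``should''. The mechanism you sketch does not work as described, for two reasons.

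First, \textbf{VSDEC} moves never change the number of faces. A split adds one vertex and one edge, contracting a non-loop edge removes one of each, and the genus is fixed. So no sequence of \textbf{VSDEC} moves can turn a $2k$-gon into ``a $2(k-1)$-gon plus a digon''; the boundary of $f$ itself has to shrink.

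Second, splitting two input vertices $u,u'$ of $\partial f$ shrinks nothing. Each split only replaces $u$ by the flow vertex $x$ on $\partial f$, with the same length. Since $x$ and $x'$ are separated along $\partial f$ by output vertices, there is no flow--flow edge on $\partial f$ to contract. Contracting $u\to x$ just undoes the split. Contracting $x\to w_1$ merges $x$ into an output vertex and creates an output--output edge, which leaves the bipartite setting your argument relies on.

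\textbf{The missing idea} is to split the two \emph{endpoints of a single boundary edge}, i.e.\ to apply the Frobenius ``z=x'' move to an alternating subpath $u_1\to w_1\leftarrow u_2\to w_2$ of $\partial f$. This is what the paper does.
\begin{enumerate}
\item At the hinges $w_1$ and $u_2$, the two path edges are adjacent in the cyclic order, because they are consecutive edges of $\partial f$.
\item Split $w_1$ and then $u_2$, each time moving its two $\partial f$-edges onto a new flow vertex.
\item The middle edge now joins two flow vertices, so contract it. This gives a valence-$4$ flow vertex with two incoming and two outgoing edges, and $\partial f$ loses one edge.
\item Continue along $\partial f$: split the next vertex so its two $\partial f$-edges go to a new flow vertex, then contract the flow--flow edge this creates. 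Each round shortens $\partial f$ by one, and every flow vertex keeps an incoming and an outgoing edge.
\item Stop when $f$ is a digon; then \textbf{DER} applies.
\end{enumerate}

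This also settles your worry about the literal subgraph property. \textbf{VSDEC} moves are reversible, and \textbf{DER} deleted only one of two parallel edges. So undoing the splits and contractions in reverse order returns exactly $\gamma$ with one edge of $\partial f$ deleted. This is the paper's remark that the process ``is equivalent to simply removing one edge of the cycle''. It gives a genuine spanning subgraph at each step, so your weakened fallback goal is unnecessary.
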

\begin{proof}
    If $\gamma$ is a graph with only one face, then $\gamma=\gamma\pri$ suffices here, so suppose $\gamma$ is a bipartite cell graph containing more than one face. Choose a face of this graph and consider the cycle of edges that bounds this face. Note that in a bipartite cell graph the only edges that exist are those between an input vertex and an output vertex. Therefore, if the cycle is to start and end at the same vertex, then it must have an even number of edges, and these edges are all alternating in direction traveling around the cycle. Any subpath of this cycle of length 3 therefore gives either a ``z-shaped'' or ``s-shaped'' subgraph that can be converted into the other shape via the corollary of \textbf{VSDEC} shown in Figure \ref{fig:zequalsx}. This move reduces the length of the cycle by 2. If we continue this process, we continue shortening the length of the cycle until the face of the graph is simply bounded by two duplicated edges. At this stage, we can remove one of these two edges by \textbf{DER}, thereby reducing $F(\gamma)$ by 1.
    \begin{figure}[ht]
        \centering
        \includegraphics[width=\linewidth]{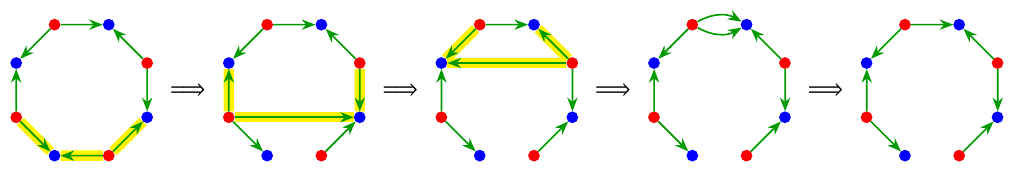}
        \caption{Reducing the length of a cycle via successive \textbf{VSDEC} moves until an edge can be removed via \textbf{DER}}
        \label{fig:movingz}
    \end{figure}

Up to our choice of length 3 subpath we begin this process with, this process is equivalent to simply removing one edge of the cycle. If we do this process for all but the last face of the graph, we can reduce the graph to one with just a single face which remains connected and has all of the same vertices, just with some of the edges removed. As this graph has only one face, it has no cycles and is the spanning tree $\gamma\pri$ that we desire.
\end{proof}
\noindent Now that every bipartite cell graph of genus 0 is equivalent to one with only one face, we only need to consider those with only one face.
\begin{Lem}\label{lem:onefaceisstar}
    Suppose $\gamma\in\Gamma_{0,n,m}$ is a connected bipartite cell graph with only one face. Then $\gamma$ is equivalent to a star-shaped graph described in Lemma \ref{lem:reorderleaves}.
\end{Lem}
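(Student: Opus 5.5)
Since $\gamma$ is connected, bipartite and has only one face, Euler's formula $V-E+F=2$ with $F=1$ gives $E=V-1=n+m-1$, so $\gamma$ is a plane tree whose vertices are the $n$ input and $m$ output vertices. Every edge joins an input vertex to an output vertex, so it is a directed edge from input to output. The plan is to induct on the number of edges that are not incident to a leaf. Equivalently, we induct on the number of non-leaf vertices among the input and output vertices. We use the \textbf{VSDEC} moves of Definition \ref{VSDEC and DER} to gather the whole tree into a single flow vertex.

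The key local step is the following. Take any input vertex $v$ of valence $d$, with edges $E_1,\ldots,E_d$ in cyclic order. Apply a vertex splitting at $v$ with $i=j$ chosen so that $v$ keeps only the new edge $E\pri$. The other endpoint of $E\pri$ is a new flow vertex $f$ carrying $E_1,\ldots,E_d$ in the same cyclic order. By the Remark following Definition \ref{VSDEC and DER}, $E\pri$ points from $v$ to $f$. Now $v$ is a valence-$1$ input leaf. Do the same at every output vertex, this time with $E\pri$ pointing from the new flow vertex to the output. The result is a tree in which every colored vertex is a leaf, and the internal vertices are flow vertices joined by directed flow--flow edges. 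Each internal flow vertex has at least one incoming and one outgoing edge: one of these is the edge to its own leaf, and since $d\ge 1$ the original edges supply the other direction. So every intermediate graph is a legitimate colored cell graph. This genus-$0$ structure is preserved because vertex splitting preserves the ribbon structure and the count $V-E$.

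Next we contract all flow--flow edges one at a time, which the Remark on directed edge contraction allows since each such edge has at least one flow endpoint. Contracting an edge of a tree merges its two endpoints and concatenates their cyclic orders at the position of the contracted edge. After the $n+m-1$ flow--flow edges (the images of the original edges) have been contracted, a single flow vertex remains. Its neighbours are exactly the $n+m$ leaves, each attached by one edge, so the graph is star-shaped in the sense of Definition \ref{def:starshaped}. The cyclic order at the centre may be arbitrary, but Lemma \ref{lem:reorderleaves} handles that. Since $\Omega$ is invariant under \textbf{VSDEC} by Definition \ref{def:flowaxioms}, $\gamma$ is equivalent to this star-shaped graph.

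The main obstacle I anticipate is checking that every move is allowed at every stage. In particular, at each intermediate flow vertex the in/out condition must hold. There is also a degenerate case: if $n=0$ or $m=0$ no edges exist beyond a single vertex, but connectivity and bipartiteness with $n,m\ge 1$ exclude this. A second point to check is that contraction really does merge the cyclic orders without changing the genus. For a tree this is automatic, since the single face persists, and I would verify it directly through the face count $F=1$ before and after each move.
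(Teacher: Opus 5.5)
Your proposal is correct and takes the same route as the paper. You split each input and output vertex into a valence-one leaf plus a flow vertex carrying all the original edges, so the flow subgraph is a copy of the tree $\gamma$, and then contract the flow--flow edges down to a single central flow vertex. Your check that every intermediate flow vertex has both an incoming and an outgoing edge is a useful detail the paper leaves implicit; the induction you announce at the start is never used, since the direct construction already completes the argument.
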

\begin{proof}
    Let $\gamma\in\Gamma_{0,n,m}$ be a bipartite cell graph with only one face. At each input and output vertex, perform a vertex splitting move that leaves the input or output vertex as valence 1 and transfers all previously existing edges to the newly created flow vertex. This results in a subgraph of flow vertices which is isomorphic to the original graph $\gamma$ with each flow vertex connected to exactly one input or output vertex. This subgraph of flow vertices remains connected, so we can perform directed edge contraction moves until these flow vertices are combined into a single flow vertex. This single flow vertex is connected to each of the input and output vertices, and this graph is exactly the star-shaped graph described in Lemma \ref{lem:reorderleaves}.
    \begin{figure}[ht]
        \centering
        \includegraphics[width=0.5\linewidth]{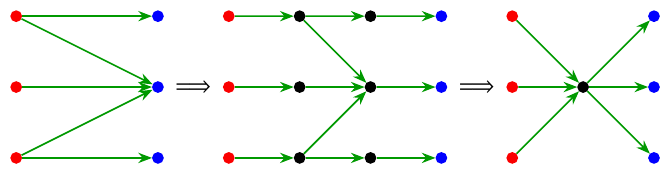}
        \caption{\textbf{VSDEC} moves converting a bipartite tree into a star-shaped graph}
        \label{fig:biptospoke}
    \end{figure}
\end{proof}

\noindent Coalescing the results of these previous lemmas, we can now find the linear map associated to any bipartite cell graph of genus 0.

\begin{Prop}\label{prop:bipartitegen0}
    Every connected genus 0 bipartite cell graph $\gamma\in\Gamma_{0,n,m}$ gives rise to the same map
    \be\label{eqn:gen0formula}
    \Omega(\gamma)(v_1,v_2,\ldots,v_n)=\delta^{m-1}(v_1 v_2\cdots v_n).
    \ee
\end{Prop}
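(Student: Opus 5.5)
The proof will chain the preceding lemmas, using the invariance of $\Omega$ under \textbf{VSDEC} and \textbf{DER} (Definition \ref{def:flowaxioms}(1)). Start with a connected bipartite $\gamma\in\Gamma_{0,n,m}$. By Lemma \ref{lem:onlyonefacebip}, $\gamma$ is equivalent to a spanning tree $\gamma_1\subset\gamma$ with one face, and $\Omega(\gamma)=\Omega(\gamma_1)$. The tree $\gamma_1$ is still bipartite and has genus $0$ with the same $n$ inputs and $m$ outputs. Lemma \ref{lem:onefaceisstar} then turns $\gamma_1$ into a star-shaped graph $\gamma_2$ without changing $\Omega$. Next, Lemma \ref{lem:reorderleaves} makes the input edges consecutive in the cyclic order at the central flow vertex, giving $\gamma_3$. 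Lemma \ref{lem:staristree} converts $\gamma_3$ into a graph $\gamma_4$ in normal form. Finally, Lemma \ref{lem:bipartitetreeform} gives $\Omega(\gamma_4)(v_1,\ldots,v_n)=\delta^{m-1}(v_1\cdots v_n)$. Since every step preserves $\Omega$, we get $\Omega(\gamma)=\delta^{m-1}(v_1\cdots v_n)$, which is independent of $\gamma$.

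Two points need care. The first is that Lemma \ref{lem:reorderleaves} uses \textbf{IECA1}-style contraction of an input leaf into the central vertex and then re-splitting it. This changes which input slot is fed into which position, so one must check that the labels $v_k$ are carried along. Here I will use commutativity of $A$: $\Omega(\gamma_4)$ is a product of the inputs in some order, so any permutation of the inputs gives the same value $v_1\cdots v_n$. The second is that the normal form of Definition \ref{def:looplessnormal} presupposes $n,m\ge 2$. The degenerate cases $n=1$ or $m=1$ (the identity, multiplication and comultiplication trees, and their iterates) are the base cases already handled in Lemma \ref{lem:bipartitetreeform}. So I will check that for $n=1$ or $m=1$ the splittings in Lemma \ref{lem:staristree} still end at the appropriate tree, with the central vertex absorbed by a valence-2 flow removal as in Lemma \ref{lem:impliedmoves}.

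I expect the main obstacle to be the interface between Lemma \ref{lem:onlyonefacebip} and Lemma \ref{lem:onefaceisstar}, specifically that the ``z=x'' moves followed by \textbf{DER} genuinely output a bipartite graph again. Each z=x move creates a valence-4 flow vertex, so the intermediate graphs are not bipartite. To handle this I will apply Lemma \ref{lem:onefaceisstar} in its natural generality: its proof only uses that the graph is a genus-0 one-face graph in which the flow subgraph is connected after splitting off the leaves. Alternatively, I will observe that contracting the new flow vertex into one of its neighbours and re-expanding recovers a bipartite tree. Either way the star-shaped form is reached, and the computation above concludes the proof.
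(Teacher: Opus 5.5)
Your proposal is correct and follows the paper's proof essentially verbatim. Both chain Lemma \ref{lem:onlyonefacebip} $\to$ Lemma \ref{lem:onefaceisstar} $\to$ Lemma \ref{lem:reorderleaves} $\to$ Lemma \ref{lem:staristree} $\to$ Lemma \ref{lem:bipartitetreeform}, using the invariance of $\Omega$ under \textbf{VSDEC}/\textbf{DER}.

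Your extra checks are legitimate refinements of points the paper leaves implicit: commutativity to reorder the final product, the cases $n=1$ or $m=1$ where the normal form has no valence-4 center, and the non-bipartite intermediate graphs in Lemma \ref{lem:onlyonefacebip}. One small correction: the contraction in Lemma \ref{lem:reorderleaves} is a directed-edge (\textbf{VSDEC}) contraction, not \textbf{IECA1}. So the input labels are carried along by the vertices automatically, and commutativity is only needed at the last multiplication.
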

\begin{proof}
    Suppose $\gamma\in\Gamma_{0,n,m}$ is a bipartite cell graph of genus 0. By Lemma \ref{lem:onlyonefacebip}, there exists some subgraph $\gamma_1$ which is a subgraph of $\gamma$ and is a spanning tree with only one face such that $\Omega(\gamma)=\Omega(\gamma_1)$. Then, by Lemmas \ref{lem:onefaceisstar} and \ref{lem:reorderleaves}, there exists some star-shaped graph $\gamma_2$ with $n$ valence 1 input vertices, $m$ valence 1 output vertices, and a single flow vertex connected to each of them in such an order that the edges connected to input vertices are all adjacent to each other. Lemma \ref{lem:staristree} shows that $\gamma_2$ is equivalent to some tree-shaped graph $\gamma_3$ with $n$ valence 1 input vertices, $m$ valence 1 output vertices, $n-2$ trivalent flow vertices with two incoming and one outgoing edge, $m-2$ trivalent flow vertices with two outgoing and one ingoing edge, and a single valence 4 flow vertex with two incoming and two outgoing edges. Therefore since $\Omega(\gamma)=\Omega(\gamma_3),$ then by Lemma \ref{lem:bipartitetreeform}, the formula \ref{eqn:gen0formula} holds.
    
\end{proof}

\subsection{Positive Genera}
\noindent We now consider the case where $g\geq 1$ by first observing how the addition of certain edges between input vertices or output vertices may change the map associated to a particular cell graph.

\begin{Def}
    (Petal) A \textbf{petal} is a pair of loops connected to the same vertex of a cell graph such that the four half-edges all appear adjacent in the cyclic order around the vertex, but neither loop itself bounds a face of the graph.
\end{Def}
\begin{figure}[ht]
    \centering
    \includegraphics[width=0.2\linewidth]{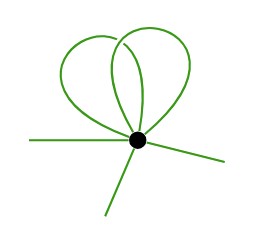}
    \caption{Vertex with a petal.}
    \label{fig:petaldef}
\end{figure}
\noindent In particular, if there are two loops $L_1$ and $L_2$ connected to the same vertex of a cell graph and we label the half-edges $L_1, \overline{L_1}, L_2, \overline{L_2}$, then these half-edges appear in the clockwise order $L_1,L_2,\overline{L_1},\overline{L_2}$. We can see that adding a petal to a cell graph increases the genus by 1. This corresponds to the idea that when embedding this graph onto a Riemann surface, in order for this petal to live on a surface and still maintain this cyclic order of half-edges without the two loops crossing, one of the loops must go around the hole of a surface while the other must go through the hole. Adding a petal to a graph requires an additional hole on the surface in order to prevent the loops from crossing.

\begin{Lem}\label{lem:redpetal}
    Adding a petal to an input vertex of a colored cell graph corresponds to multiplying by the vector associated to that vertex by the Euler element $\eul$. In particular, if $\gamma\in\Gamma_{g,n,m}$ is a colored cell graph and $\gamma\pri\in\Gamma_{g+1,n,m}$ is the graph obtained by adding a petal to the $i$th input vertex of $\gamma$, then
    \be\label{eqn:redpetal}
    \Omega(\gamma\pri)(v_1,\ldots,v_n)=\Omega(\gamma)(v_1,\ldots,v_{i-1},\eul v_i,v_{i+1},\ldots,v_n).
    \ee
\end{Lem}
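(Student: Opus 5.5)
We contract the two loops of the petal one at a time using \textbf{IECA 2}, reading off the algebra at each step. Label the half-edges at the $i$th input vertex of $\gamma\pri$ as $L_1,L_2,\overline{L_1},\overline{L_2}$ in clockwise order, with the remaining half-edges of $\gamma$ appearing consecutively elsewhere in the cyclic order.

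The first step is to contract $L_1$. Because the half-edges of $L_1$ are separated by $L_2$ on one side and by $\overline{L_2}$ together with the rest of the vertex on the other, contracting $L_1$ does not disconnect the graph. Indeed, $L_2$ then becomes an edge joining the two new vertices $i$ and $i\pri$. So the connected case \eqref{IECA2aeqn} applies, giving a graph $\gamma_1\in\Gamma_{g,n+1,m}$ and
\[
\Omega(\gamma\pri)(v_1,\ldots,v_n)=\Omega(\gamma_1)(v_1,\ldots,v_{i-1},\delta(v_i),v_{i+1},\ldots,v_n).
\]
Here $\delta(v_i)$ occupies slots $i$ and $i\pri$. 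The genus drops by one as predicted by the Euler characteristic count: one vertex is added and one edge removed, while the face count stays the same.

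The second step is to observe that in $\gamma_1$ the former loop $L_2$ is a straight edge between the distinct input vertices $i$ and $i\pri$. One of these is a leaf: the new vertex carrying only the half-edge $L_2$, with all other edges of $\gamma$ attached to the other. Applying \textbf{IECA 1} to $L_2$ multiplies the two slots and returns exactly the graph $\gamma$. Writing $\delta(v_i)=\sum v_i^{(1)}\tensor v_i^{(2)}$, this gives
\[
\Omega(\gamma\pri)(v_1,\ldots,v_n)=\Omega(\gamma)\bigl(v_1,\ldots,v_{i-1},m\circ\delta(v_i),v_{i+1},\ldots,v_n\bigr),
\]
with $m\circ\delta(v_i)=\bige(v_i)=\eul v_i$ by the Remark on the Euler element. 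This is \eqref{eqn:redpetal}. The identification with $\eul v_i$ uses that $A$ is Frobenius; for a nearly Frobenius algebra the same argument yields $\bige(v_i)$.

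The main obstacle is the combinatorial bookkeeping of the ribbon structure. One must check that after contracting $L_1$ the cyclic orders at $i$ and $i\pri$ really make $L_2$ a non-loop edge, so that the resulting graph is connected. One must also check that contracting $L_2$ reproduces $\gamma$ with its original cyclic order at vertex $i$, and not some other cell graph. I would verify this by tracking the face-boundary permutation locally near the vertex. The hypothesis that neither loop bounds a face guarantees that we are not in the disconnected case of \textbf{IECA 2}. A secondary point is that the \textbf{IECA 1} formula is stated for $i<j$; commutativity of $A$ removes any concern about which slot receives which tensor factor.
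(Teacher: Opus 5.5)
Your proof is correct and follows the paper's argument: you contract one petal loop by the connected case of \textbf{IECA 2}, then contract the other loop, which is now a straight edge between $i$ and $i\pri$, by \textbf{IECA 1}, and this puts $m\circ\delta(v_i)=\eul v_i$ in the $i$th slot. Your bookkeeping is cleaner than the paper's, which writes $\gamma_1\in\Gamma_{g-1,n+1,m}$ and keeps $\Omega(\gamma_1)$ in the final line, where $\gamma_1\in\Gamma_{g,n+1,m}$ and $\Omega(\gamma)$ are what is meant.
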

\begin{proof}
    This follows from an application of \textbf{IECA 2a} followed by an application of \textbf{IECA 1} contracting the two loops that make the petal. In particular, we see that if $\gamma_1\in\Gamma_{g-1,n+1,m}$ is the graph created by one of the two loops in the petal, then
    \begin{align*}
        \Omega(\gamma\pri)(v_1,\ldots,v_n)&=\Omega(\gamma_1)(v_1,\ldots,v_{i-1},\delta(v_i),v_{i+1},\ldots,v_n)\\
        &=\Omega(\gamma_1)(v_1,\ldots,v_{i-1},m\circ\delta(v_i),v_{i+1},\ldots,v_n)\\
        &=\Omega(\gamma_1)(v_1,\ldots,v_{i-1},\eul v_i,v_{i+1},\ldots,v_n).
    \end{align*}
    Therefore, \ref{eqn:redpetal} holds.
\end{proof}
\begin{figure}[ht]
    \centering
    \includegraphics[width=0.7\linewidth]{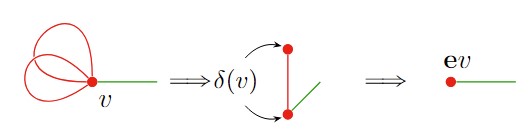}
    \caption{Contracting the two loops in a petal at an input vertex.}
    \label{fig:redpetal}
\end{figure}
\noindent A similar result holds if a petal is instead added to an output vertex.
\begin{Lem}\label{lem:bluepetal}
    Adding a petal to an output vertex of a colored cell graph corresponds to multiplying the component of the tensor product associated to that vertex by the Euler element $\eul$. That is, if $\gamma\in\Gamma_{g,n,m}$ is a colored cell graph and $\gamma\pri\in\Gamma_{g+1,n,m}$ is the graph created by adding a petal to the $j$th output vertex of $\gamma$, then 
    \be\label{eqn:bluepetal}
    \Omega(\gamma\pri)(v_1,\ldots,v_n)=(m\circ\delta) \circ_{j} \Omega(\gamma)(v_1,\ldots,v_n).
    \ee
\end{Lem}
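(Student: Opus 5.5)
We read $\gamma'$ from the output side, as the dual of Lemma \ref{lem:redpetal}. There we contracted the two loops of an input petal using \textbf{IECA 2a} and then \textbf{IECA 1}. Here we instead \emph{build} the petal at the $j$th output vertex from $\gamma$ by two output edge-construction moves, applied in the reverse order.

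The plan has three steps.

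\begin{enumerate}
\item \textbf{Apply OECA 1.} Start from $\gamma\in\Gamma_{g,n,m}$ and split the $j$th output vertex into two output vertices $j$ and $j'$ joined by a new edge $E$. Distribute the half-edges of the original vertex so that all of them stay at vertex $j$, and $j'$ carries only $E$. This gives $\gamma_1\in\Gamma_{g,n,m+1}$ with
\[
\Omega(\gamma_1)=\delta\circ_j\Omega(\gamma).
\]

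\item \textbf{Apply OECA 2a to $j$ and $j'$.} Merge the output vertices $j$ and $j'$ of $\gamma_1$ into a single vertex and add a loop $L$. The former edge $E$ then becomes a loop at the merged vertex. The genus rises by one, so the result lies in $\Gamma_{g+1,n,m}$, and
\[
\Omega(\gamma_1')=m\circ_{j,j'\mapsto j}\bigl(\delta\circ_j\Omega(\gamma)\bigr)=(m\circ\delta)\circ_j\Omega(\gamma).
\]

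\item \textbf{Check that $\gamma_1'=\gamma'$.} Choose the insertion point of $L$ in the cyclic order so that the half-edges of $E$ and $L$ interleave as $E,L,\overline{E},\overline{L}$, all adjacent to one another. We must also check that neither loop bounds a face, so that the pair is a genuine petal.
\end{enumerate}

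The main obstacle is Step 3. OECA 2a only says that $\gamma_1'$ is a graph whose loop contraction returns $\gamma_1$; it does not fix the cyclic position of the new loop. We must show that some choice of that position produces exactly the petal configuration. To do this we track the faces: the merge must join the two sides of $E$ into different faces, so that the Euler characteristic $V-E+F$ drops by $2$. This is consistent with the genus count $g+1$, and it means the loops are non-separating and interleaved.

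If the cyclic-order bookkeeping in Step 3 is awkward, there is a fallback. Contracting one loop of the petal in $\gamma'$ is precisely the inverse of OECA 2a, by its defining property. Contracting the remaining loop $E$ between the separated vertices $j$ and $j'$ is then the inverse of OECA 1. Running these inverses reduces $\gamma'$ to $\gamma$, which gives (\ref{eqn:bluepetal}) directly.

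For a Frobenius algebra we finally note that $m\circ\delta(a)=\eul a$. Hence (\ref{eqn:bluepetal}) multiplies the $j$th tensor factor by $\eul$, as the statement claims. In the nearly Frobenius setting the same identity reads with $\bige$ in place of multiplication by $\eul$.
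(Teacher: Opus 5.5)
Your argument is correct and is the paper's own: split the $j$th output vertex by OECA 1, then re-merge $j$ and $j'$ with a loop by OECA 2a, giving $\Omega(\gamma')=m\circ_{j,j'\mapsto j}(\delta\circ_j\Omega(\gamma))=(m\circ\delta)\circ_j\Omega(\gamma)$. Your Step 3 (or the cleaner reverse-contraction fallback) supplies the check, left implicit in the paper, that a suitable cyclic position of the new loop produces a genuine petal. One correction to that check: in the petal both sides of $E$ lie in the same face and $F$ is unchanged, so the drop of $V-E+F$ by $2$ comes entirely from $V-E$, and the genus increase is automatic from OECA 2a.
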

\begin{proof}
    This similarly follows directly from an application of \textbf{OECA 1} followed by an application of \textbf{OECA 2a}. Let $\gamma_1\in\Gamma_{g,n,m+1}$ be the  obtained by applying \textbf{OECA 1} to the $j$th output vertex of $\gamma$, splitting it into two separate vertices. Then applying \textbf{OECA 2a} to the $j$th and $j\pri$th output vertices of $\gamma_1$ creates a petal to create the graph $\gamma\pri$. By the natural transformations associated to these two axioms, we see that
    \begin{align*}
        \Omega(\gamma\pri)(v_1,\ldots,v_n)&=m\circ_{j,j+1\mapsto j}\Omega(\gamma_1)(v_1,\ldots,v_n)\\
        &=m\circ_{j,j+1\mapsto j}\delta \circ_j \Omega(\gamma)(v_1,\ldots,v_n)\\
        &=(m\circ \delta)\circ_j \Omega(\gamma)(v_1,\ldots,v_n).
    \end{align*}
    Therefore \ref{eqn:bluepetal} holds.
\end{proof}
\begin{figure}[ht]
    \centering
    \includegraphics[width=0.7\linewidth]{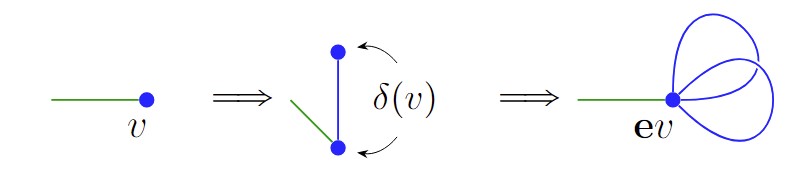}
    \caption{Adding two edges to create a petal at an output vertex.}
    \label{fig:bluepetal}
\end{figure}
\noindent These two previous lemmas largely cover the case where the genus is encoded by edges exclusively between input vertices or exclusively between output vertices. If the genus is encoded via flow edges, then we use a similar process to the techniques outlined in Section \ref{gen zero case} to reduce our graph to a star-shaped graph, but with additional petals at the flow vertex. Then, by using \textbf{VSDEC} moves, we can contract an edge to an input vertex and split this new vertex in such a way that the new input vertex acquires all of the petals from the flow vertex. From here, we simply apply \ref{lem:redpetal} to this new graph until we arrive at a star-shaped graph of genus 0.

\begin{Prop}\label{prop:bipartiteposgen}
    Every connected bipartite cell graph $\gamma\in\Gamma_{g,n,m}$ gives rise to the same map
    \be\label{eqn:bipartiteposgen}
    \Omega(\gamma)(v_1,v_2,\ldots,v_n)=\begin{cases}
        \eps(v_1\cdots v_n\eul^g),& m=0\\
        v_1\cdots v_n\eul^g,& m=1\\
        \delta^{m-1}(v_1\cdots v_n\eul^g),& m\geq 2
    \end{cases}.
    \ee
\end{Prop}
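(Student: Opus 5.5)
We reduce a connected bipartite $\gamma\in\Gamma_{g,n,m}$ of positive genus to a genus-$0$ star-shaped graph decorated with $g$ petals, and then read off the Euler factors with Lemma \ref{lem:redpetal}. The genus-$0$ case is Proposition \ref{prop:bipartitegen0}, so we assume $g\ge1$ and carry out the steps below in order.

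\textbf{Step 1: gather everything at one flow vertex.} At every input and output vertex we perform a vertex splitting that leaves that vertex of valence $1$ and moves all of its former edges to a new flow vertex, exactly as in Lemma \ref{lem:onefaceisstar}. The flow vertices then span a connected subgraph isomorphic to $\gamma$. We contract directed flow edges along a spanning tree of this subgraph. The result is a single flow vertex $c$ carrying the $n+m$ leaf edges together with $E-(V-1)$ loops. These loops come from the non-tree edges, with $E,V$ the edge and vertex counts of $\gamma$.

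\textbf{Step 2: normalise the loops at $c$.} Every loop that bounds a face is removed by DER. We then use VSDEC moves at $c$ to rearrange the half-edges: we split a small sector off $c$ and contract back along a different edge, as in Lemma \ref{lem:reorderleaves}, and we use the ``z=x'' move of Lemma \ref{lem:impliedmoves}. The goal is that homotopic loops become adjacent and are removed by DER, and that the surviving loops come in $g$ interlaced pairs $L_1,L_2,\overline{L_1},\overline{L_2}$, i.e.\ petals. Since these moves preserve genus and only remove loops that bound faces or duplicate other loops, the Euler characteristic count $\chi=2-2g$ of a one-vertex-plus-leaves graph forces exactly $2g$ non-trivial loops. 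The classification of one-vertex ribbon graphs (chord diagrams) then allows them to be brought to the standard form of $g$ adjacent petals.

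\textbf{Step 3: transfer the petals to an input vertex.} Choose the leaf edge to input vertex $1$ and contract it, so that $c$ becomes input vertex $1$ carrying the petals. Then split this vertex (VSDEC) so that the new flow vertex receives all the remaining leaf edges, while input vertex $1$ keeps the $g$ petals and one edge to the flow vertex. Applying Lemma \ref{lem:redpetal} $g$ times shows
\[
\Omega(\gamma)(v_1,\ldots,v_n)=\Omega(\gamma_0)(\eul^g v_1,v_2,\ldots,v_n),
\]
where $\gamma_0\in\Gamma_{0,n,m}$ is a star-shaped graph. By Lemmas \ref{lem:reorderleaves}, \ref{lem:staristree} and \ref{lem:bipartitetreeform}, $\Omega(\gamma_0)$ is $\delta^{m-1}$ of the product of its inputs, which gives the cases $m\ge1$ because $\eul$ is central. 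For $m=0$ we do not need to pass through a flow vertex: $\gamma$ then has only input vertices and no directed edges, so the reduction runs directly via IECA as in Theorem \ref{prop:gn0case}. After contracting to one input vertex with petals and removing disc-bounding loops, we are left with $\Omega(\gamma_0)=\eps$ by the base-case assignment, giving $\eps(v_1\cdots v_n\eul^g)$.

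The main obstacle is Step 2: showing with VSDEC and DER alone, without contracting loops (which is not allowed at a flow vertex), that an arbitrary loop configuration at $c$ can be brought to standard petal form. I would first handle it by induction on the number of loops, using a split of $c$ that separates one loop's endpoints and re-contracting, to slide half-edges past each other. If that proves awkward, the fallback is to move the loops to an input vertex first (Step 3 before Step 2) and then use IECA 2a, IECA 1 and Lemma \ref{lem:redloopremoval}, where loop contraction is available and the argument of \cite{DM_ribbon} applies directly.
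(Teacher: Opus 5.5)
Your Step 2 is a genuine gap, and its counting claim is false as stated. After Step 1 the central vertex carries $E-V+1=2g+F-1$ loops, where $F$ is the number of faces of $\gamma$. Deleting single face-bounding loops (that is Axiom (4) of Definition \ref{def:ribbonTQFT}, not DER) and DER-deleting pairs that bound a $2$-gon need not bring $F$ down to $1$. Take the torus bouquet with cyclic order $a,b,c,\overline{a},\overline{b},\overline{c}$; it does arise in Step 1 from a graph in $\Gamma_{1,1,1}$ with four edges. It has two triangular faces, no disc-bounding loop, and no pair of loops bounding a $2$-gon, yet it has $3>2g$ loops. So the Euler characteristic does not force $2g$ loops until you have a mechanism that shortens faces. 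Your further claim, that split-and-recontract moves at a flow vertex reach the petal normal form, is a connectivity statement about these specific moves. The classification of chord diagrams does not supply it, so you would have to prove it.

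The paper avoids both issues. It first applies Lemma \ref{lem:onlyonefacebip} to the bipartite graph itself: it shortens a face cycle by ``z=x'' moves until it is a $2$-gon, then applies DER. As a result, gathering at one flow vertex produces exactly $2g$ loops. It then reorders the leaves as in Lemma \ref{lem:reorderleaves} so that the loop half-edges form one contiguous block. It contracts the input leaf next to that block, splits so that the input vertex keeps all the loops, and contracts the loops there. The loops are never normalized at the flow vertex.

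Your fallback is therefore the correct repair, and it is essentially the paper's route. Drop Step 2 and keep two points in mind.

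\textbf{Leaf order.} The split in Step 3 needs the remaining leaf edges to form a cyclic interval at $c$. So the leaf reordering of Lemma \ref{lem:reorderleaves} must be done before that split, not as part of a later normalization.

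\textbf{Loop contraction.} At the input vertex, do not appeal to Lemma \ref{lem:redpetal}, which presupposes petal form (the paper cites it at this point too). Argue instead by induction on the number of loops.
\begin{itemize}
\item If some loop $L$ crosses another loop $L'$, contracting $L$ by IECA 2a keeps the graph connected, and $L'$ becomes a straight edge between the two new input vertices. Contracting $L'$ by IECA 1 replaces $v_1$ by $m\circ\delta(v_1)=\eul v_1$ and lowers the genus by one. What remains is again a single input vertex carrying only loops and the edge to the flow vertex.
\item If no two loops cross, consider an innermost loop on the side away from the edge to the flow vertex. Its two half-edges are adjacent, so it bounds a disc and is removed by Lemma \ref{lem:redloopremoval}.
\end{itemize}
This gives $\Omega(\gamma)(v_1,\ldots,v_n)=\Omega(\gamma_0)(\eul^g v_1,v_2,\ldots,v_n)$ for any loop configuration and any $F$. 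With this argument, even the preliminary one-face reduction of Lemma \ref{lem:onlyonefacebip} becomes unnecessary.
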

\begin{proof}
    Note that Dumitrescu and Mulase \cite{DM_ribbon} give the proof for the $m=0$ case, so we will consider $m\geq 1$.
    
    \noindent Now following \ref{lem:onlyonefacebip}, we can find some equivalent spanning tree which contains only one face. Then we follow the process of the proof of \ref{lem:onefaceisstar}, splitting each input and output vertex into a leaf and then contracting edges connecting flow vertices until there is just a single flow vertex. This results in a star-shaped graph with $2g$ additional loops at the central flow vertex. From here, we can reorder these leaves using the process of the proof of \ref{lem:reorderleaves} until we've partitioned the half-edges at the central vertex into three disjoint but adjacent groups: $n$ edges connecting the central vertex to input vertices, $m$ edges connecting the central vertex to output vertices, and $2g$ half-edges which arise as part of a loop.

    \noindent From here, we can perform a directed edge contraction from the central vertex to the input vertex adjacent to the collection of loops. We then split this new vertex in such a way that all of the loops now become attached to the input vertex, and the central vertex is connected to only input and output vertices. For the loops at the input vertex, we see by \ref{lem:redpetal} that contracting these loops via \textbf{IECA 2a} and \textbf{IECA 1}, will multiply the vector associated to this input vertex by $\eul^g$. What remains after this is a star-shaped graph $\gamma\pri$ whose associated input vectors are $v_1,v_2,\ldots,v_{i-1},\eul^g v_i,v_{i+1},\ldots v_n$ and therefore by \ref{lem:staristree} and \ref{lem:bipartitetreeform}, it follows that 
    \begin{align*}
        \Omega(\gamma)(v_1,v_2,\ldots,v_n)&=\Omega(\gamma\pri)(v_1,v_2,\ldots,v_{i-1},\eul^g v_i,v_{i+1},\ldots v_n)\\
        &=\delta^{m-1}(\eul^gv_1v_2\cdots v_n).
    \end{align*}
\end{proof}
\noindent Now that this holds for all bipartite cell graphs, we claim that this is precisely the classification for all colored cell graphs of type $(g,n,m)$. We will now give a proof of the Theorem \ref{thm B} for $A$ a Frobenius algebra and $m>0$.

\begin{Thm}\label{thm:cellgraphclassification}
Let $A$ be a Frobenius algebra and   $\gamma\in\Gamma_{g,n,m}$ a connected colored cell graph giving rise to a multilinear map $\Omega(\gamma)(v_1,\ldots, v_n) :\Atn\longrightarrow\Atm$ that satisfies the colored Edge Contraction Axioms. Then $\Omega(\gamma)$ is independent of the graph $\gamma$, depending only on the topology of the Riemann surface and it is given by
        \be\label{eqn:cellgraphclassification}
    \Omega(\gamma)(v_1,v_2,\ldots,v_n)=\begin{cases}
        \eps(v_1\cdots v_n\eul^g),& m=0\\
        v_1\cdots v_n\eul^g,& m=1\\
        \delta^{m-1}(v_1\cdots v_n\eul^g),& m\geq 2
    \end{cases}.
    \ee
\end{Thm}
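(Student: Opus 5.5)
The plan is to reduce an arbitrary connected colored cell graph $\gamma\in\Gamma_{g,n,m}$ to a bipartite one without changing $\Omega(\gamma)$ or the topological type $(g,n,m)$, and then invoke Proposition \ref{prop:bipartiteposgen}. For $m=0$ there are no output or flow vertices, so every edge joins input vertices. This case is the theorem of \cite{DM_ribbon}, recorded as Theorem \ref{prop:gn0case}: contract straight edges by \textbf{IECA 1}, remove disc-bounding loops by Lemma \ref{lem:redloopremoval}, and contract petals by Lemma \ref{lem:redpetal}. So I will concentrate on $m\geq 1$. The reduction has three parts, each dealing with one kind of non-bipartite edge: red edges between input vertices, blue edges between output vertices, and directed edges touching flow vertices.

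\textbf{Step 1 (red edges).} I induct on the number of red edges. Take a red edge. If it is a straight edge between inputs $i\neq j$, contract it by \textbf{IECA 1}; this gives a graph of type $(g,n-1,m)$ with input $v_iv_j$. If it is a loop, apply \textbf{IECA 2}. In the connected case the type becomes $(g-1,n+1,m)$ with $\delta(v_i)$ inserted. In the disconnected case, one component may contain no outputs; since $\gamma$ is connected and $m\geq1$, at least one piece carries outputs, so the formula of \textbf{IECA 2b} gives a product of a value of type $(g_1,\cdot,0)$ and a value of type $(g_2,\cdot,m)$. By induction, together with the $m=0$ formula, these reduced values are already known. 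Recombining them uses only algebra identities: the Frobenius relation \eqref{eqn:frobrel}, the identity $m\circ\delta=\bige$ (so $\eul$ for a Frobenius algebra), and the canonical basis expansion \eqref{eqn:canonbasis}. For example, a connected loop contraction gives $\delta^{m-1}(\eul^{g-1}\,v_1\cdots m(\delta(v_i))\cdots v_n)=\delta^{m-1}(\eul^g v_1\cdots v_n)$. A disconnected contraction gives $\sum \eps(\eul^{g_1}x\,e_a)\,\delta^{m-1}(\eul^{g_2}y\,e_b)\,\eta(v_i,e_ke_\ell)\eta^{ka}\eta^{\ell b}$, which collapses to $\delta^{m-1}(\eul^g xyv_i)$ via \eqref{eqn:canonbasis}. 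For this to work the induction must be on a quantity that strictly drops under both contraction and splitting, namely the number of edges, with the statement proved simultaneously for all $(g,n,m)$.

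\textbf{Step 2 (blue edges).} This is handled dually with the output construction axioms read backwards. Consider a blue straight edge between outputs $j,j'$ whose contraction yields a connected graph of type $(g,n,m-1)$. By \textbf{OECA 1}, $\Omega(\gamma)=\delta\circ_j\Omega(\gamma')$, and coassociativity and cocommutativity turn $\delta\circ_j\delta^{m-2}$ into $\delta^{m-1}$. Now consider a blue loop. If it disconnects the graph upon splitting, use \textbf{OECA 2b}. If not, use \textbf{OECA 2a}, and then the Frobenius relation to pull the multiplication through $\delta^{m}$: $m\circ_{i,j}\delta^{m}(x)=\delta^{m-2}(m\delta(x))=\delta^{m-2}(\eul x)$, consistent with the genus increasing by one. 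The blue loops that bound discs are handled by Lemma \ref{lem:blueloopaddition}.

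\textbf{Step 3 (flow vertices).} Contract every directed edge incident to a flow vertex using \textbf{VSDEC}, which is allowed by Definition \ref{def:flowaxioms}. Each such contraction removes a flow vertex and preserves $\Omega$ and the type. The only obstruction is a flow vertex whose remaining edges are all loops, but a flow vertex needs an incoming and an outgoing edge, so it always has a non-loop directed edge available. After this step the graph is bipartite.

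The main obstacle I expect is Step 1. The issue is ordering the induction so that the intermediate graphs in the disconnected \textbf{IECA 2b} case remain legal colored cell graphs, in particular so that no flow vertex loses its in/out edges. I would therefore do Step 3 first only partially, and perform the red and blue contractions before the flow contractions, verifying that the algebraic identities match the predicted formula in each case.
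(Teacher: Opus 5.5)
Your proposal follows essentially the paper's route: contract input edges with the \textbf{IECA}, contract output edges by reading the \textbf{OECA} backwards, reduce to bipartite graphs and apply Proposition \ref{prop:bipartiteposgen}, then recombine with the Frobenius relation and coassociativity. Your edge-count induction, which recombines at each step rather than once at the end, and your \textbf{VSDEC} treatment of flow vertices are more careful than the paper's sketch, which simply asserts that a disjoint union of bipartite graphs remains.

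Two small repairs are needed. First, in the disconnected \textbf{IECA 2b} case both pieces may carry outputs ($m_1,m_2\geq 1$), not only the $(\cdot,0)$ and $(\cdot,m)$ split you describe. That case is handled by $(\delta^{m_1-1}\otimes\delta^{m_2-1})\bigl((\eul^{g_1}x\otimes\eul^{g_2}y)\,\delta(v_i)\bigr)=\delta^{m-1}(\eul^{g}xyv_i)$, using the Frobenius relation, coassociativity and cocommutativity. Second, Step 3 need not leave a bipartite graph, since merging a flow vertex into an input can create red edges. This is harmless: every move lowers the edge count, so your induction simply interleaves the three steps.
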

\begin{proof}
    Again, we consider in our proof the case where $m\geq 1$. To find the map associated to a given colored cell graph $\gamma$, we consider the following process.

    \noindent First, we contract all edges between input vertices and use the input edge contraction axioms to record the input vectors of the new maps. If there is any positive genus encoded within these input edges, then some of these input vectors may be multiplied by powers of the Euler element $\eul$.

    \noindent Next, we contract all edges between output vertices and record the order in which we performed these moves. What remains after this is a disjoint union of connected bipartite graphs. Once we've resolved the outputs of this collection of graphs, we use the output edge reconstruction axioms to reconstruct the edges we just contracted. If there is any positive genus encoded within these output edges, then some of the slots in this $m$ tensor will be multiplied by $\eul$.

    \noindent For the disjoint union of connected bipartite graphs, we simply use Proposition \ref{prop:bipartiteposgen}. Throughout this whole process, between the input edges and the bipartite graphs, there will be $n-1$ instances of multiplying two inputs together, and between the output edges and the bipartite graphs, there will be $m-1$ instances of applying the coproduct. Additionally, the genus must be encoded either by input edges, by output edges, or by the bipartite graphs, but each of these scenarios corresponds to multiplication by a power of $\eul$ where the total power of these is $\eul^g$.

    \noindent Through this process, what remains is a sum over some number of basis elements of $m$ tensors with the each of the $n$ input vectors and $g$ powers of $\eul$ dispersed throughout the $m$ tensor factors. One can then verify up to coassociativity and the Frobenius relation that this is equivalent to $\delta^{m-1}(v_1v_2\cdots v_n).$
\end{proof}
\noindent Perhaps this proof of \ref{thm:cellgraphclassification} is most easily understood via a visual example.
\subsection{An Instructive Example}\label{sec:examplegraph}
In this section, we will consider a particular colored cell graph $\gamma$ of type $(2,2,2)$ and perform the process of finding the associated linear map $\Omega(\gamma):A\tensor A\longrightarrow A\tensor A$. Starting with the graph $\gamma$ in Figure \ref{fig:graphprocess}, we will label input and output vertices in these graphs in order from top to bottom.

\begin{figure}[htbp]
    \centering
    \includegraphics[width=.4\linewidth]{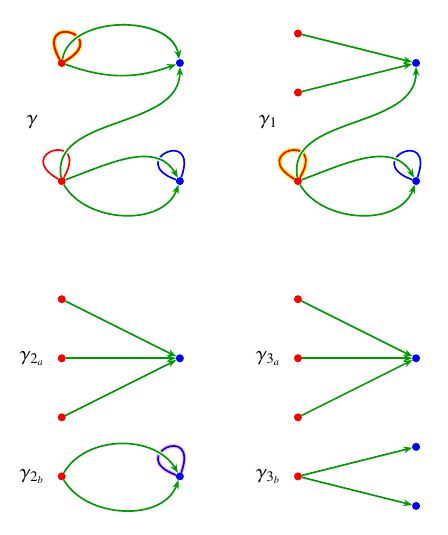}
    \caption{Contracting edges in a colored cell graph to determine the associated linear map}
    \label{fig:graphprocess}
\end{figure}

\noindent We begin by contracting the loop at the first input vertex via \textbf{IECA 2a}, separating that vertex into two other vertices. If the resulting graph is $\gamma_1$, then this axiom gives that
\be
\Omega(\gamma)(v_1,v_2)=\Omega(\gamma_1)(\delta(v_1),v_2).
\ee
\noindent Now, contracting the remaining input loop via \textbf{IECA 2b} splits the graph into two disjoint pieces $(\gamma_{2_a}, \gamma_{2_b})\in\Gamma_{0,3,1}\times\Gamma_{1,1,1}$, and this axioms says
\be\label{eqn:splitgamma1}
\Omega(\gamma_1)(\delta(v_1),v_2)=\sum_{a,b,k,\ell}\eta(v_2,e_ke_\ell)\eta^{ka}\eta^{\ell b}\Omega(\gamma_{2_a})(\delta(v_1),e_a)\tensor \Omega(\gamma_{2_b})(e_b).
\ee
\noindent Now, $\gamma_{2_a}$ is a genus 0 bipartite graph, so we have that \be\label{eqn:2a}
\Omega(\gamma_{2_a})(x,y,z)=xyz.
\ee
\noindent On the other hand, $\gamma_{2_b}$ has an output loop, and we contract this loop to get the resulting graph $\gamma_{3_b}$. If we were to reconstruct this loop via \textbf{OECA 2a}, then this axiom gives that
\be\label{eqn:2bto3b}
\Omega(\gamma_{2_b})(v)=m\circ \Omega(\gamma_{3_b})(v).
\ee
\noindent Now, $\gamma_{3_b}$ is a genus 0 bipartite graph, and we have that 
\be
\Omega(\gamma_{3_b})(v)=\delta(v).
\ee
\noindent Now applying \ref{eqn:2bto3b} gives that 
\be\label{eqn:2b}
\Omega(\gamma_{2_b})(v)=m\circ \delta(v)=\eul v.
\ee
We then use both \ref{eqn:2a} and \ref{eqn:2b} and apply them to \ref{eqn:splitgamma1} to see that 
\be
\Omega(\gamma_1)(\delta(v_1),v_2)=\sum_{a,b,k,\ell}\eta(v_2,e_ke_\ell)\eta^{ka}\eta^{\ell b}(m\circ\delta(v_1)\cdot e_a)\tensor(\eul e_b).
\ee
We then use the basis expansion \ref{eqn:canonbasis}, the expansion of $\delta$ \ref{eqn:deltabasis}, and the Frobenius relation \ref{eqn:frobrel} to compute this map.
\begin{align*}
    \Omega(\gamma)(v_1,v_2)&=\Omega(\gamma_1)(\delta(v_1),v_2)\\
    &=\sum_{a,b,k,\ell}\eta(v_2,e_ke_\ell)\eta^{ka}\eta^{\ell b}(m\circ\delta(v_1)\cdot e_a)\tensor(\eul e_b)\\
    &=\sum_{a,b,k,\ell}\eta(v_2,e_ke_\ell)\eta^{ka}\eta^{\ell b}(\eul v_1e_a)\tensor(\eul e_b)\\
    &=\sum_{a,b,k,\ell}\eta(v_2e_\ell,e_k)\eta^{ka}\eta^{\ell b}(\eul v_1e_a)\tensor(\eul e_b)\\
    &=\sum_{b,\ell}\eta^{\ell b}\eul v_1v_2e_\ell\tensor(\eul e_b)\\
    &=(\id\tensor m)\left(\sum_{b,\ell}\eta^{\ell b}\eul v_1v_2e_\ell\tensor e_b\tensor \eul\right)\\
    &=(\id\tensor m)(\delta(\eul v_1v_2)\tensor \eul)\\
    &=(\delta\tensor m)(\eul v_1v_2\tensor \eul)\\
    &=\delta(\eul^2 v_1v_2).
\end{align*}
\noindent We can see how this process generalizes: contracting input edges to apply the product or coproduct to input vectors, contracting output edges to decide how to apply the product and coproduct to the outputs of bipartite graphs, and then using the previously proven bipartite case to convert input vectors into output vectors. Each successive use of the coproduct followed by the product reduces the genus of the graph by one but in turn multiplies an input or output vector by the Euler element $\eul$.

\section{Colored Cell Graphs for Nearly Frobenius Algebras}\label{SecInfiniteCCG}
We claim that our formulation of 2D TQFT in \ref{thm:cellgraphclassification} using the colored cell graph approach also works as a formulation of Almost TQFT; however, we do need to apply a few modifications. The first consideration is that if the Nearly Frobenius algebra $A$ has no unit, then we have no interpretation of cell graphs with no input vertices, and if $A$ has no counit, then we have no interpretation of cell graphs with no output vertices. Because of this, we would like to consider graphs that at least one input and one output vertex. In particular, for the space $\Gamma_{g,n,m}$, we require $n,m\geq 1$.

\noindent Since Nearly Frobenius algebras may not be counital (and therefore may not have the bilinear form $\eta$), then we may not be able to write $\delta(v)$ in terms of a canonical basis expansion as we were able to in \ref{eqn:deltabasis}. Since the formula for \textbf{IECA 2b} relies on this, then we must change how this axiom is presented. The gist of the axiom is largely the same: the coproduct is applied to the vector at a particular input vertex and then the two factors of this coproduct are split across input vertices of two separate connected graphs. In light of this, consider the following. Suppose $v_i\in A$ is a vector such that $\delta(v_i)$ is can be written as a sum of tensors 
\be
\delta(v_i)=\sum_a u_a\tensor w_a.
\ee
\noindent In this case, the new \textbf{IECA 2b} should read that 
\be
\Omega(\gamma)(v_1,\ldots,v_n)=\sum_a \Omega(\gamma_1)(v_{I-},u_a,v_{I_+})\tensor \Omega(\gamma_2)(v_{J_-},w_a,v_{J_+}),
\ee

Before we proceed, we recall now the main result of \cite{DaDu} which was stated in Section \ref{sec:TQFT} as Theorem \ref{theorem A}.

\noindent Another major consideration here is regarding Lemmas \ref{lem:redloopremoval} and \ref{lem:blueloopaddition}. Recall the proofs of both of these lemmas involved contracting a disc bounding loop, splitting a graph into two disjoint pieces where one piece was just a graph with a single vertex. Since we no longer consider graphs of only one color (like a graph with only one vertex must be), this proof no longer applies. Instead, we must take the results of \ref{lem:redloopremoval} and \ref{lem:blueloopaddition} as additional assumptions that disc-bounding loops at input vertices may be removed and disc-bounding loops at output vertices may be added without changing the maps associated to the cell graph.

\noindent We provided a more procedural proof of \ref{thm:cellgraphclassification} rather than one that focused on the independence of the order of edge contraction and construction moves. The order of this procedure is even more critical in the case of Almost TQFT for the main reason that some cell graphs may contain loops that if contracted would split the graph into pieces where one of the connected pieces only has one color of vertices. See, for example, Figure \ref{fig:bad splitting} in which contracting this loop results in a disconnected graph where one of the disconnected pieces has no output vertices.

\noindent We claim we can avoid this problem if we require in our procedure that we always contract straight edges before we contract loops. This prevents us from splitting into a subgraph of only one color for the following reason. Without loss of generality, suppose we have a loop at an input vertex such that contracting it would cause one connected subgraph to have all the same color of vertices. By necessity, these must all be input vertices because one of the vertices caused by splitting the loop must both be input vertices. However, this means that in the original graph, all of the vertices inside the loop must be input vertices, and therefore they should be connected to the original input vertex via straight edges. If we instead contract the straight edges first, this original loop becomes a disc-bounding loop which we can remove by our new axiom.

\begin{figure}[ht]
    \centering
    \includegraphics[width=0.5\linewidth]{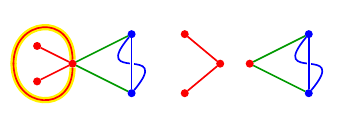}
    \caption{Contracting loops before straight edges could lead to disconnected graphs of only one color.}
    \label{fig:bad splitting}
\end{figure}

\noindent Following this procedure, we can present a similar classification result for Almost TQFT generated by colored cell graphs. Specifically, if $A$ is a Nearly Frobenius algebra and for $n,m\geq 1$, \be
\Omega:\Gamma_{g,n,m}\longrightarrow\Hom(\Atn,\Atm)
\ee
\noindent is an assignment of a multilinear map
\be
\Omega(\gamma):\Atn\longrightarrow\Atm
\ee
to a colored cell graph $\gamma\in\Gamma_{g,n,m}$ such that $\Omega$ satisfies the axioms of colored Edge Contraction and satisfies the results of Lemmas \ref{lem:redloopremoval} and \ref{lem:blueloopaddition}, then this assignment depends only upon the topological type $(g,n,m)$ of the graph. We are ready to prove the following result
\begin{Thm}\ref{thm B}\label{thm:infcellgraph}
    Let $A$ be a nearly Frobenius algebra with  coproduct $\delta$ and the Euler map $\bige$. Further let $\gamma$ be a colored cell graph of type $(g,n,m)$ with $n,m\geq 1,$ and $\Omega(\gamma)\in \Hom(A^{\tensor n}, A^{\tensor m})$ an assignment of a multilinear map satisfying  the Edge Contraction Axioms. Then the {\it ribbon TQFT} associated to $\gamma$ is independent of the graph and is given by

    \be
    \Omega(\gamma)(v_1,v_2,\ldots v_n)=\delta^{m-1}(\bige^g(v_1v_2\cdots v_n)).
    \ee
\end{Thm}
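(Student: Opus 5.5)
The plan is to follow the procedural proof of Theorem \ref{thm:cellgraphclassification}, but to avoid every step that used a unit, a counit, the form $\eta$, or the Euler element $\eul$. Only $m$, $\delta$, the Frobenius relation \ref{eqn:frobrel}, coassociativity, and the Euler map $\bige=m\circ\delta$ may be used. Accordingly, each occurrence of $\eul x$ in Lemmas \ref{lem:redpetal} and \ref{lem:bluepetal} is replaced by $\bige(x)$. The proofs of those lemmas only used \textbf{IECA 2a} followed by \textbf{IECA 1}, and \textbf{OECA 1} followed by \textbf{OECA 2a}, so they give $\bige$ at an input slot and $\bige\circ_j$ at an output slot.

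The first step is the genus $0$ bipartite case, Proposition \ref{prop:bipartitegen0}. Lemmas \ref{lem:onlyonefacebip}, \ref{lem:onefaceisstar}, \ref{lem:reorderleaves} and \ref{lem:staristree} only use VSDEC/DER moves, and Lemma \ref{lem:bipartitetreeform} only uses the base graphs for $\id$, $m$, $\delta$ together with \textbf{IECA 1} and \textbf{OECA 1}. Since $n,m\ge1$, every graph in this chain keeps at least one vertex of each colour, so none of these moves needs the unit or counit graph. This gives $\delta^{m-1}(v_1\cdots v_n)$.

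For a bipartite graph of genus $g$, I would follow Proposition \ref{prop:bipartiteposgen}: reduce to a star with $g$ petals, then move the petals to an input vertex and contract them. This yields $\delta^{m-1}(v_1\cdots \bige^g(v_i)\cdots v_n)$. To rewrite this as $\delta^{m-1}(\bige^g(v_1\cdots v_n))$, I need the identity $\bige(xy)=x\,\bige(y)$. It follows from the Frobenius relation: $m\delta(xy)=m((m\otimes\id)(x\otimes\delta(y)))=x\cdot m\delta(y)$ by associativity. Similarly, $\delta\circ\bige=(\bige\otimes\id)\circ\delta=(\id\otimes\bige)\circ\delta$, and this lets factors of $\bige$ placed on output slots be moved inside $\delta^{m-1}$.

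For a general graph, I would contract straight input edges first, as prescribed in the text preceding the theorem. This turns every loop that would separate off a one-coloured component into a disc-bounding loop, which axiom (4) removes; I would argue the dual statement for output loops using Lemma \ref{lem:blueloopaddition}, taken as an axiom. Next I contract input loops. \textbf{IECA 2a} produces $\delta(v_i)$ in two slots, and \textbf{IECA 2b} uses the Sweedler form $\delta(v_i)=\sum u_a\otimes w_a$. After that I contract output edges, recording them for \textbf{OECA} reconstruction, and I apply the bipartite result to each component.

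The main obstacle is the final algebraic reassembly. The result is a sum of tensors built from Sweedler components of $\delta$, products, and $\bige$'s spread across components, and it must be identified with $\delta^{m-1}(\bige^g(v_1\cdots v_n))$ without the basis computation used in Section \ref{sec:examplegraph}. I would do this by induction on the number of contractions. I would show that each axiom step maps the claimed formula for the smaller graphs to the claimed formula for $\gamma$, checking three identities. For \textbf{IECA 2b}, $\sum \delta^{m_1-1}\bige^{g_1}(xu_a)\otimes\delta^{m_2-1}\bige^{g_2}(yw_a)$ should equal $\delta^{m-1}\bige^{g}(xyv_i)$; this uses the Frobenius relation twice to pull $x$ and $y$ inside $\delta$, then coassociativity and $\bige$-linearity. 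For \textbf{OECA 2b}, multiplying an output factor of one component with one of the other should produce the same formula. For \textbf{OECA 2a}, $m\circ_{i,j}\delta^{m}$ should equal $\delta^{m-1}\bige$, which holds by coassociativity and cocommutativity.
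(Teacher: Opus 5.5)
Your proposal follows the paper's proof essentially step for step. You rerun the procedure of Theorem \ref{thm:cellgraphclassification} with $\bige$ in place of multiplication by $\eul$, contract straight edges before loops to avoid one-coloured components (taking Lemmas \ref{lem:redloopremoval} and \ref{lem:blueloopaddition} as axioms), and reduce bipartite pieces via VSDEC/DER to trees whose loops are moved onto an input vertex. Your identities $\bige(xy)=x\,\bige(y)$ and $\delta\circ\bige=(\bige\otimes\id)\circ\delta$, together with the axiom-by-axiom inductive check, are correct, and they make explicit what the paper only asserts ``up to coassociativity and the Frobenius relation.''

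The one soft spot is inherited from the paper. Contracting straight edges first does not by itself make a loop with an all-input (or all-output) side disc-bounding when that side carries genus, for example a loop enclosing a petal. So the non-separating loops on that side must be contracted first, by \textbf{IECA 2a}/\textbf{IECA 1} (respectively by reversing \textbf{OECA 2a}/\textbf{OECA 1}); the nested loops that remain are then disc-bounding and can be removed innermost first.
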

\begin{proof}
    The proof here follows the exact same procedure at that of \ref{thm:cellgraphclassification}. First we contract all straight edges between input edges followed by loops at input edges to record input vectors of new maps. Then we contract straight edges between output edges followed by loops at output edges and record the order in which we performed these moves to later reconstruct these outputs. We apply the same procedure of reducing bipartite graphs to trees with some number of loops and then use \textbf{VSDEC} to change the color of these loops. After contracting these loops, we are left simply with binary trees.

    \noindent As before, this corresponds to the application of the product and coproduct some appropriate number of times, which up to coassociativity and the Frobenius relation is precisely equivalent to $\delta^{m-1}(\bige^g(v_1\cdots v_n))$
\end{proof}

\noindent
We recall this result that was also mentioned in \cite{DaDu, thesis} .

\begin{Cor}\label{tqfts} Let $A$ be a counital nearly Frobenius algebra. Then the Almost TQFTs induced by the sewing axioms of Atiyah and Segal and the ribbon TQFTs induced by the Edge Contraction/Construction Axioms coincide. 
	\end{Cor}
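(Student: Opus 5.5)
The plan is to prove the corollary by comparing, for each topological type $(g,n,m)$, the two explicit classifications already available, rather than comparing the two axiom systems directly. For a counital nearly Frobenius algebra $A$ with counit $\eps$, coproduct $\delta$ and Euler map $\bige$, Theorem \ref{theorem A} gives the Almost TQFT maps $\omega_{g,n,m}$ produced by the (partial) sewing axiom of Atiyah and Segal. On the ribbon side, Theorem \ref{thm:infcellgraph} gives $\Omega(\gamma)$ for $n,m\geq 1$, and Theorem \ref{prop:1.2} gives it for $m=0$, $n\geq1$. Each of these results also says that $\Omega(\gamma)$ depends only on the type of $\gamma$. So the ribbon TQFT descends to a well-defined family $\Omega_{g,n,m}$ indexed by topological type, and the corollary reduces to showing $\Omega_{g,n,m}=\omega_{g,n,m}$ for every admissible $(g,n,m)$.

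The key steps, in order, are as follows.
\begin{enumerate}
\item Check that $\Omega_{g,n,m}$ is well defined on types. This means every type with $n\geq 1$ is realized by at least one colored cell graph; a star-shaped graph with $2g$ petals at the central flow vertex works. Graph independence then comes from the classification theorems.
\item For $m\geq 2$, compare $\delta^{m-1}(\bige^g(v_1\cdots v_n))$ from Theorem \ref{thm:infcellgraph} with the $m\geq2$ line of Theorem \ref{theorem A}. They agree verbatim.
\item For $m=1$, use the convention $\delta^0=\id$, so both sides equal $\bige^g(v_1\cdots v_n)$.
\item For $m=0$, compare Theorem \ref{prop:1.2} with the third line of Theorem \ref{theorem A}. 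Both give $\eps(\bige^g(v_1\cdots v_n))$.
\end{enumerate}
Since the two families coincide on all types, and an Almost TQFT on $\mathbf{2Cob^+}$ is determined by its values $\omega_{g,n,m}$ on connected surfaces, the two theories coincide.

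The main obstacle is the matching of domains. The ribbon TQFT has no value for $n=0$, because no graph has zero input vertices, while $\mathbf{2Cob^+}$ only excludes empty objects. I would handle this in two ways.
\begin{enumerate}
\item Cobordisms with $n=0$ but $m\geq1$ have empty source, so they are not morphisms in $\mathbf{2Cob^+}$. They therefore impose no condition.
\item Disconnected cobordisms are handled by monoidality on the TQFT side. On the ribbon side they are handled by the disconnected forms of \textbf{IECA 2b} and \textbf{OECA 2b}, which produce tensor products of the connected values.
\end{enumerate}

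A secondary point needs checking. In the $m=0$ case, Theorem \ref{prop:1.2} uses only axioms (1) and (4). I would confirm that its normalization $\Omega(\text{single input vertex})=\eps$ agrees with the base case $\omega_{0,1,0}=\eps$ fixed in Theorem \ref{theorem A}, so that no rescaling ambiguity remains. Given that, the corollary follows.
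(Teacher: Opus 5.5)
Your proposal is correct and follows the paper's own argument. The paper proves the corollary simply by matching the formulas of Theorem \ref{theorem A} against the graph-independent values of Theorem \ref{thm:infcellgraph}, which is exactly your steps (2)--(4); you additionally cite Theorem \ref{prop:1.2} for the $m=0$ line, which the paper's one-line proof leaves implicit.

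One small slip in your step (1): genus $g$ needs $g$ petals (that is, $2g$ loops), not $2g$ petals. For $m=0$, $g=0$ a central flow vertex has no outgoing edge, which is not allowed. A tree on the colored vertices with $g$ petals at one input vertex realizes every type with $n\geq 1$.
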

	
	The proof relies on the classification Theorems \ref{theorem A} and \ref{thm:infcellgraph}.

\section{Twisted Catalan numbers recursion}\label{twisted Catalan}

\noindent Furthermore, in this section we
 exploit both classifications obtained in Theorems \ref{theorem A} and \ref{theorem A} to prove Theorem \ref{thm:coloredcatalan}, that is a generalization of Catalan numbers recursion relation twisted by TQFT of \cite{DM_invitation}  compatible with the formalism of topological recursion, algebra endowed with a nearly Frobenius structure $A$. 



\subsection{The recursion of Catalan numbers twisted by Almost TQFTs}

\noindent We return to our consideration of arrowed cell graphs and their enumeration by generalized Catalan numbers. We can think of cell graphs as a ``fattening'' of a graph where at each vertex, we prescribe a cyclic order of the half-edges incident to each vertex. We then give each vertex and each edge some thickness and embed the new graph onto a Riemann surface with minimal genus such that no edges intersect and at each vertex the incidents half-edges look clockwise according to the prescribed cyclic order.\\

\noindent We recall from Section \ref{Catalan}  that the generalized Catalan number

$$
C_{g,n}(\vec{\mu}):=|\widehat{\Gamma}_{g,n}(\vec{\mu})|$$
denote the number of uncolored arrowed cell graphs of type $(g,n)$ in  $\widehat{\Gamma}_{g,n}(\vec{\mu})$.

Since this Catalan recursion corresponds to edge contraction moves, and the TQFT formalism can be equivalently formulated via the same edge contraction moves, the Catalan recursion twisted by $\omega_{g,n}$ imply the following (\cite{DM_invitation} or  \cite{DM_ribbon}).

\noindent
\begin{Thm}\label{thm:coloredcatalan}
    Let $A$ be a counital nearly Frobenius algebra with counit $\eps$, Euler map $\bige$. Let $C_{g,n}(\vec{\mu})$ denote the number of arrowed cell graphs of type $(g,n)$ where the degrees of the vertices are specified by the vector $\vec{\mu}$. Further, consider the Almost TQFT $\omega_{g,n,m}:A^{\tensor n}\longrightarrow A^{\tensor m}$, and the restriction of that Almost TQFT given by $\omega_{g,n}(v_1,\ldots,v_n)=\omega_{g,n,0}(v_1,\ldots,v_n)=\eps(\bige^g(v_1\cdots v_n)).$ 
    
    Then, the Almost TQFT twisted by the generalized Catalan numbers satisfy a recursion relation given by

    \begin{align}\label{twistedtqft}
    	\begin{split}
        C_{g,n}(\vec{\mu})\cdot\omega_{g,n}(v_1,\ldots,v_n)=&
        \sum_{j=2}^n(\mu_1+\mu_j-2)C_{g,n-1}(\mu_1+\mu_j-2,\vec{\mu}_{[n]\setminus\{1,j\}})\cdot\omega_{g-1,n+1}(v_1v_j,\vec{v}_{[n]\setminus\{1,j\}})\\&+\sum_{\alpha+\beta=\mu_1-2}\alpha\beta\Biggl[C_{g-1,n+1}(\alpha,\beta,\vec{\mu}_{[n]\setminus1})\omega_{g-1,n+1}(\delta(v_1),\vec{v}_{[n]\setminus 1})\\&\hspace{70pt}+\sum_{\substack{g_1+g_2=g\\I\sqcup J=\{2,\ldots,n\}}}\sum_{a,b,k,\ell}\biggl(\eta(v_1,e_ke_\ell)\eta^{ak}\eta^{b\ell}C_{g_1,|I|+1}(\alpha,\vec{\mu_I})\\&\hspace{80pt}\times C_{g_2,|J|+1}(\beta,\vec{\mu_J})\omega_{g_1,|I|+1}(e_a,\vec{v}_I)\omega_{g_2,|J|+1}(e_b,\vec{v}_J)\biggr)\Biggr].
    \end{split}
\end{align}   
\end{Thm}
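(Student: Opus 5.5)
The plan is to twist the untwisted Catalan recursion (\ref{eqn:catalanrecursion}) by the Almost TQFT. The link is that both sides arise from the same edge-contraction operation on arrowed cell graphs. Concretely, I would introduce the twisted count
\[
C_{g,n}(\vec\mu)\,\omega_{g,n}(v_1,\ldots,v_n)=\sum_{\gamma\in\widehat\Gamma_{g,n}(\vec\mu)}\Omega(\gamma)(v_1,\ldots,v_n),
\]
where all vertices of $\gamma$ are coloured as inputs ($m=0$). This identity is immediate from Theorem \ref{prop:1.2}: because $A$ is counital, $\Omega(\gamma)=\eps(\bige^g(v_1\cdots v_n))=\omega_{g,n,0}$ is independent of $\gamma$. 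By Corollary \ref{tqfts} this value also agrees with the Almost TQFT value of Theorem \ref{theorem A}.

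Next, for each arrowed graph $\gamma$ I would contract the arrowed half-edge at vertex $1$ and split into the three cases of the Catalan recursion.
\begin{itemize}
\item If the arrowed edge is a straight edge to vertex $j$, IECA 1 gives $\Omega(\gamma)=\Omega(\gamma')(v_1v_j,\ldots)$ with $\gamma'\in\widehat\Gamma_{g,n-1}$.
\item If it is a loop and $\gamma'$ stays connected, IECA 2a gives $\Omega(\gamma')(\delta(v_1),\ldots)$ on type $(g-1,n+1)$.
\item If it is a loop and $\gamma'$ disconnects, IECA 2b gives the sum $\sum\eta(v_1,e_ke_\ell)\eta^{ka}\eta^{\ell b}\Omega(\gamma_1)(e_a,\vec v_I)\Omega(\gamma_2)(e_b,\vec v_J)$. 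Here the counit supplies $\eta(u,w)=\eps(uw)$, so this basis formula is available; in the infinite-dimensional case I would use the Sweedler form $\delta(v_1)=\sum u_a\otimes w_a$ instead.
\end{itemize}
Since the twisted value is graph-independent, in each case the $\Omega$-factor pulls out of the sum over graphs. What remains is exactly the cardinality count appearing in (\ref{eqn:catalanrecursion}), now multiplied by the appropriate $\omega$ evaluated at the modified inputs.

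The combinatorial coefficients need care. Contracting the marked edge produces an arrowed graph only after a new arrow is chosen at the merged or split vertex. Conversely, reconstructing $\gamma$ from $\gamma'$ requires choosing where to reinsert the edge. This is the source of the factors $\mu_j$ (or $\mu_1+\mu_j-2$) and $\alpha\beta$ in (\ref{twistedtqft}). I would state the bijection explicitly in the normalization the formula uses, matching the convention of \cite{DM_invitation} for which half-edges carry arrows. I would then verify the weights by a small case, such as $(g,n)=(0,1)$, where the formula should reduce to $C_k\,\eps(v)=\sum C_aC_b\,\eps(v)$ via $\sum\eta(v,e_ke_\ell)\eta^{ak}\eta^{b\ell}\eps(e_a)\eps(e_b)=\eps(v)$.

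I expect the main obstacle to be this coefficient and index bookkeeping, rather than the algebra. Each term on the right must be checked for consistency with the closed form $\eps(\bige^{g}(\cdots))$:
\begin{itemize}
\item In the straight-edge term the genus is unchanged, so the correct TQFT factor is $\omega_{g,n-1}(v_1v_j,\ldots)$; I expect the printed $\omega_{g-1,n+1}$ to be a typo.
\item The connected-loop term should satisfy $\omega_{g-1,n+1}(\delta(v_1),\ldots)=\eps(\bige^{g-1}(m\delta(v_1)\cdots))=\omega_{g,n}$.
\item The disconnected term uses the Frobenius relation together with counitality, $(\eps\otimes\id)\delta=\id$, to recombine the two factors into $\eps(\bige^{g_1+g_2}(v_1\cdots v_n))$.
\end{itemize}
If all three consistency checks hold, the twisted recursion follows term by term from (\ref{eqn:catalanrecursion}).
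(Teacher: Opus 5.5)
Your strategy is the paper's own: sum $\Omega(\gamma)$ over $\widehat{\Gamma}_{g,n}(\vec\mu)$, and use graph independence (Theorem \ref{prop:1.2} with Corollary \ref{tqfts}) to get $C_{g,n}(\vec\mu)\,\omega_{g,n}$ on the left. Then contract the arrowed edge at vertex $1$ via IECA 1, 2a, 2b and factor out the $\omega$'s. You are right that the straight-edge term must carry $\omega_{g,n-1}(v_1v_j,\ldots)$.

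\textbf{The gap.} The step that fails is the one you defer. There is no arrow convention ``in the normalization the formula uses'' that produces the coefficients $(\mu_1+\mu_j-2)$ and $\alpha\beta$ with $C_{g,n}(\vec\mu)$ on the left. Your own bijection fixes the coefficients. Rebuilding $\gamma$ from the arrowed $\gamma'$ needs a free choice of arrow among the $\mu_j$ half-edges at vertex $j$ in the straight-edge case, and no choice in the two loop cases. So you get exactly $\mu_j$ and $1$, as in (\ref{eqn:catalanrecursion}); this is also what your last sentence (``follows term by term from (\ref{eqn:catalanrecursion})'') asserts.

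The printed coefficients belong to the recursion for $D_{g,n}(\vec\mu)=C_{g,n}(\vec\mu)/(\mu_1\cdots\mu_n)$. That recursion comes from dividing (\ref{eqn:catalanrecursion}) by $\mu_2\cdots\mu_n$, its left side is $\mu_1D_{g,n}(\vec\mu)$, and its $\alpha=0$ or $\beta=0$ terms need separate care. With $C$'s throughout, the displayed identity is false. For $(g,n)=(0,1)$ and $\mu_1=2$, the left side is $C_{0,1}(2)\,\eps(v)=\eps(v)$, while the only term on the right has $\alpha=\beta=0$ and is killed by $\alpha\beta$. Your proposed $(0,1)$ check would expose this: the printed formula gives $\sum_{a+b=k-1}4ab\,C_aC_b\,\eps(v)$, not $\sum C_aC_b\,\eps(v)$. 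The paper's proof has the same hole: it appeals to ``the same argument'' as the untwisted recursion and never computes a coefficient.

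\textbf{What to prove instead.} The correct target is (\ref{eqn:catalanrecursion}) with each term multiplied by its twisted factor: $\omega_{g,n-1}(v_1v_j,\ldots)$, $\omega_{g-1,n+1}(\delta(v_1),\ldots)$, and the product term. Equivalently, you can prove its $D$-normalized form.

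For this corrected identity, your three consistency checks already give a complete and much shorter proof, by a different route from the paper's. Use the closed form $\omega_{g,n}=\eps(\bige^g(v_1\cdots v_n))$ together with:
\begin{itemize}
\item $\bige(xy)=x\,\bige(y)$;
\item $(\bige^{g_1}\otimes\bige^{g_2})\circ\delta=\delta\circ\bige^{g_1+g_2}$, from the Frobenius relation and coassociativity;
\item $\delta(z)\cdot(x\otimes y)=\delta(zxy)$;
\item $(\eps\otimes\eps)\circ\delta=\eps$.
\end{itemize}
These show every factor on the right equals $\omega_{g,n}(v_1,\ldots,v_n)$. So the twisted recursion is just (\ref{eqn:catalanrecursion}) multiplied by a scalar, and neither the ribbon TQFT nor any graph bookkeeping is needed.

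Work the disconnected term in the form $\delta(v_1)=\sum u_a\otimes w_a$ throughout. A counit gives the pairing $\eps(uw)$ but not, in general, an inverse matrix $\eta^{ab}$ (this already fails for infinite-dimensional $A$). So the basis expression in the statement should be read through $\delta(v_1)$.
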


\begin{proof}
	For the nearly Frobenius algebra $A$, we restrict our Almost TQFT to the $m=0$ case giving us a map $\omega_{g,n}:A^{\tensor n}\longrightarrow K$.
	We recall that the Almost TQFT coincide with the ribbon TQFT by Corollary \ref{tqfts}.
	
	Therefore we need to consider cell graphs with only input vertices.
 We argue that the recursion \ref{twistedtqft} answers a counting problem of the number of arrowed cell graphs of type $(g,n)$ with degrees specified by $(\mu_1,\ldots \mu_n)$ where each vertex is weighted by a parameter $v_i\in A$ see also Section \ref{Catalan}, further preserving functoriality under the Input Edge Contraction Axioms defined in Section \ref{SecRedAxioms}. We notice that this counting problem gives a well defined map $A^n \longrightarrow K$ due to the finiteness of the number of cell graphs of type $(g, n)$ and degree $(\mu_1, \ldots, \mu_n)$ of Section \ref{Catalan}.
	
	We further observe by functoriality, that any cell graph of type $(g,n)$ and degree $(\mu_1,\ldots, \mu_n)$ with the weighting $(v_1, \ldots, v_n)$ will give rise to the same map $A^n \longrightarrow K$ given by the value of the Almost TQFT computed by the Theorem \ref{theorem A}. Therefore, our counting problem will result in multiplying the value of the Almost TQFT, $\eps(\bige^g(v_1\cdots v_n))$, by the number of such cell graphs. We recall from Section \ref{Catalan} that the count of arrowed cell graphs of type $(g,n)$ and degree $(\mu_1, \ldots, \mu_n)$ is the Catalan number
	$C_{g,n}(\mu_1, \ldots, \mu_n)$. Therefore this counting problem gives us the left hand side of Equation \ref{twistedtqft}. We further observe that we can answer this counting problem by contracting the arrowed edge or loop of a cell graph of type $(g,n)$ and degree $(\mu_1\ldots, \mu_n)$ at vertex weighted $v_1$ giving us the recursion formula of the right hand side of the Equation \ref{twistedtqft}.
	
	Indeed, we remark first that the three terms in the right hand side of the Equation \ref{twistedtqft} arise from the three possibilities of contracting the arrowed edge at the vertex weighted by $v_1$.

		 \begin{figure}[ht]
		\centering
		\includegraphics[width=0.4\linewidth]{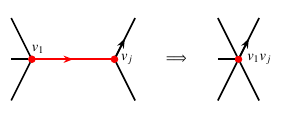}\hspace{1cm}
		\includegraphics[width=0.4\linewidth]{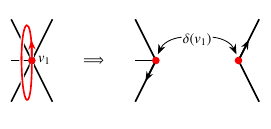}
		\caption{Edge contraction operations for arrowed cell graphs.}
		\label{fig:arrowECOgraph}
	\end{figure}
	
	The first term, of type $(g, n-1)$, arises from contracting an edge connecting the vertex weighted $v_1$ to the vertex weighted $v_j$. The second term, of type $(g-1,n+1)$, arises from contracting a loop at the vertex weighted $v_1$, given that contraction does not disconnect the graph. The last term, arises from contracting a loop at the vertex weighted $v_1$ in such a way that contraction disconnects the graph into of graph of type $(g_1, |I|+1)$ and another one of type $(g_2, |J|+1)$, where $g_1+g_2=g$ and 
	$I \sqcup J=\{2, \ldots, n\}$. As before, each of the three terms comes as a product between the number of the Almost TQFT and the number of arrowed cell graphs of the corresponding type, namely the associated Catalan number. We further recall from the Catalan recursion that contracting an edge or a loop of a cell graph changes the degree vector and the topological type of the associated cell, as in recursion \ref{cat} of  Section \ref{Catalan}.
	
	We will contract the edge of the cell graph with the arrowed half-edge at the first vertex weighted $v_1$. If this is an edge connecting to the $j$-th vertex weighted $v_j$, then the new vertex will be weighted $v_1v_j$, and we place an arrow on the half-edge of this new vertex which is next to the original contracted half edge in the cyclic clockwise ordering on the embedded surface. If instead, the edge we contract is a loop, the two new vertices will be weighted by the tensor components of $\delta(v_1)$, and we place arrows at each vertex at the half-edge next to the original loop in the cyclic ordering.

	 By contracting the arrowed edge/loop at vertex weighted $v_1$, we conclude that the counting problem gives a three terms recursion of the right hand side of Equation \ref{twistedtqft}, by the same argument used to give the Catalan recursion of \cite{DM_invitation, DM_ribbon}. It is useful to further remark that the Catalan recursion is based on edge or loop contractions, so we are not required to remove loops; therefore, Case 1 of Lemma \ref{lem:redloopremoval} is not needed.

\end{proof}

 In the next section, we introduce a vertex coloring on these graphs by labeling some of these as red vertices and the rest as blue vertices. This coloring of vertices induces a coloring on the edges where an edge connecting two vertices of the same color shares the color of the vertices, and an edge connecting two vertices of different colors can be labeled as green. These graphs are precisely the graphs of type $(g,n,m)$ discussed in chapter \ref{chap:ccg}, but with no flow vertices.

\subsection{Colored Catalan Numbers} 

In the this section, we propose a few ideas of avenues to be explored beyond this work.

We consider a Riemann Surface of genus $g$, and a collection of $n+m$ colored marked points, say that $p_1,\ldots, p_n$ red points while $q_1,\ldots, q_m$ blue points. A cell graph on this Riemann surface will have either edges or loops incident to each vertex $p_i$, and $q_j$, and the degree of a vertex will be the number of half edges attached to it. For example, a vertex incident to one loop and one edge will have degree 3, since a loop has 2 half edges. A red vertex can only be connected to a blue vertex by an edge, we will label it as a black edge.

We now consider arrowed cell graphs where in this setting we mark with an arrow one red half edge at each red vertex, and one blue half edge at each blue vertex. If no such half edges exist at a particular vertex then we leave that vertex unmarked. Let $\widehat{\Gamma}_{g,n,m}(\overrightarrow{\nu},\overrightarrow{d},\overrightarrow{\mu},\overrightarrow{f})$ be the set of arrowed cell graphs with $n$ red vertices and $m$ blue vertices and degrees specified by the four vectors $\overrightarrow{\nu}=(\nu_1,\ldots,\nu_n), \overrightarrow{d}=(d_1,\ldots,d_n), \overrightarrow{\mu}=(\mu_1,\ldots,\mu_m),$ and $ \overrightarrow{f}=(f_1,\ldots,f_m)$. We have $n$ red vertices and $m$ blue vertices, so we need to keep track of the number of half edges of each color we have: $\vec\nu$ gives the number of red half edges from a red vertex to other red vertices, $\vec{d}$ gives the number of black edges from red vertices to blue vertices, $\vec{\mu}$ gives the number of blue half edges from blue vertices to blue vertices, and $\vec{f}$ gives the number of black half edges from each blue vertex to red vertices. For example a graph $\gamma\in \widehat{\Gamma}_{g,n,m}(\overrightarrow{\nu},\overrightarrow{d},\overrightarrow{\mu},\overrightarrow{f})$ will be an arrowed cell graph on a surface of genus $g$ with $n$ red marked points $p_1,\ldots p_n$ and $m$ blue marked points $q_1,\ldots, q_m$, and with an arrow attached to one red or blue half edge at every vertex that such a half edge is available.
The graph $\gamma$ has $\nu_i$ red half edges and $d_i$ black half edges at each red vertex $p_i$ and further $\mu_j$ blue half edges and $f_j$ black half edges at each blue vertex $q_j$. We observe that $\sum_{i=1}^n d_i=\sum_{j=1}^m f_j$.

\begin{Lem}
Let $\widehat{\Gamma}_{g,n,m}(\overrightarrow{\nu},\overrightarrow{d},\overrightarrow{\mu},\overrightarrow{f})$ be the set of all arrowed colored cell graphs drawn on a closed connected oriented Riemann surface of genus $g$ with $n$ labeled red vertices, $m$ blue vertices and corresponding degrees at each vertex counted by the vector $(\overrightarrow{\nu}, \overrightarrow{d}, \overrightarrow{\mu}, \overrightarrow{f})$. Then $ \widehat{\Gamma}_{g,n,m}(\overrightarrow{\nu},\overrightarrow{d},\overrightarrow{\mu},\overrightarrow{f})$ is a finite set.   
\end{Lem}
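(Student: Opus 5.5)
The plan is to bound the combinatorial data of such a graph and then encode each graph as a finite amount of combinatorial data. First, the total number of half-edges is fixed: it equals
\[
H=\sum_{i=1}^n(\nu_i+d_i)+\sum_{j=1}^m(\mu_j+f_j),
\]
so the number of edges is $E=H/2$ (and the set is empty if $H$ is odd, or if $\sum_i d_i\neq\sum_j f_j$). The vertex set is fixed, with $V=n+m$ labeled vertices.

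Next I would use the standard description of a cell graph as a ribbon graph recorded above in Section \ref{Catalan}. Label the $H$ half-edges $1,\ldots,H$ so that the half-edges at vertex $p_i$ (resp.\ $q_j$) occupy a prescribed block of labels of size $\nu_i+d_i$ (resp.\ $\mu_j+f_j$). A graph is then determined by two pieces of data:
\begin{enumerate}[(i)]
\item a permutation $\sigma$ whose cycles are the cyclic orders at the vertices, preserving each block;
\item a fixed-point-free involution $\iota$ pairing half-edges into edges.
\end{enumerate}
The coloring imposes restrictions on $\iota$, namely that red half-edges pair with red, blue with blue, and black half-edges at red vertices with black half-edges at blue vertices. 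The arrow is a choice of one red or blue half-edge per vertex, and there are at most $\prod_i\nu_i\prod_j\mu_j$ such choices (with a factor $1$ when the relevant degree is zero). Consequently the set of triples (cyclic orders, pairing, arrows) is a subset of a finite set, of size at most $\bigl(\prod (\text{block sizes}-1)!\bigr)\cdot (H-1)!!\cdot\prod\nu_i\prod\mu_j$.

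The map from $\widehat{\Gamma}_{g,n,m}(\vec\nu,\vec d,\vec\mu,\vec f)$ to these triples is injective up to the relabeling of half-edges within blocks, because a cell graph on an oriented surface is recovered from its ribbon structure. The genus is then read off from the Euler characteristic $V-E+F=2-2g$, where $F$ is the number of cycles of $\sigma\circ\iota$. So the graphs of the prescribed genus $g$ form a subset of the image, and an injection into (a quotient of) a finite set yields finiteness.

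The main obstacle I expect is making precise that isomorphism classes of arrowed colored cell graphs correspond to such combinatorial data, that is, that embedding data on the surface adds nothing beyond the cyclic orders. Here I would invoke the remark of Section \ref{Catalan} that a cell graph is the same as a ribbon graph, with the genus being the minimal embedding genus. Connectedness and the requirement that the cell decomposition have $2$-cell faces only cut the set down further, so they do not affect finiteness.
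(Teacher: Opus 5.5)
Your proposal is correct and follows essentially the same route as the paper: the paper notes that the vertex count $c_0=n+m$ and the edge count $c_1=\frac{1}{2}\bigl(\sum_i\nu_i+\sum_j\mu_j+\sum_k d_k+\sum_r f_r\bigr)$ are fixed, and concludes directly that only finitely many graphs are possible. Your encoding of a graph by block-wise cyclic orders $\sigma$, a pairing involution $\iota$, and arrow choices supplies the justification the paper leaves implicit for that last step, namely that a cell graph up to isomorphism is determined by finite ribbon data, so it is a more careful version of the same argument.
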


\begin{proof}
Let $\gamma\in \widehat{\Gamma}_{g,n,m}(\overrightarrow{\nu},\overrightarrow{d},\overrightarrow{\mu},\overrightarrow{f})$ be a colored ribbon graph, and $c_{\alpha}(\gamma)$ be the number of $\alpha$ cells of a cell decomposition associated to each $\gamma$. Then the number of vertices 
$c_0(\gamma)=n+m$, the number of edges 
$$c_1(\gamma)=\dfrac{1}{2}\left(\sum_{i=1}^{n} \nu_i + \sum_{j=1}^m \mu_j +\sum_{k=1}^n d_k +\sum_{r=1}^{m} f_r\right)$$ and the number of faces $c_2(\gamma)$ satisfies the inequality
$$2-2g=c_0(\gamma)-c_1(\gamma)+c_2(\gamma).$$
Since the $c_0$ and $c_1$ are finite, we can have only finitely many graphs with a fixed number of vertices and edges. 

\end{proof}

\begin{Def}\label{cat def} We introduce {\it the colored Catalan number} as a count of colored arrowed cell graph 
$$C_{g, n, m}(\overrightarrow{\nu},\overrightarrow{d},\overrightarrow{\mu},\overrightarrow{f})
:=|\widehat{\Gamma}_{g,n,m}(\overrightarrow{\nu},\overrightarrow{d},\overrightarrow{\mu},\overrightarrow{f})|
$$
\end{Def}

\begin{Ex}
	There is only one unique cell graph of type $(0,1,1)$ with $k$ edges connecting the two vertices, since only one assignment of cyclic orientation results in this graph being able to be embedded on a sphere. Since there are no red or blue half edges then there are no arrows on this cell graph. Therefore, $C_{0,1,1}(0,k,0,k)=1$.

	 Further, one can see that $C_{0,1,1}(2a,1,2b, 1)=4ab\cdot C_a C_b$, where $C_m$ represents the classical Catalan number counting $m$ pairs of parentheses. This corresponds to $C_aC_b$ unarrowed graphs being made from connecting a graph of type $\Gamma_{0,1}(2a)$ with a graph of type $\Gamma_{0,1}(2b)$ then with $2a$ choices of an arrow at the first vertex and $2b$ choices of an arrow at the second vertex.
\end{Ex}

\begin{Quest}
    Do the {\it colored Catalan numbers} $C_{g,n,m}(\overrightarrow{\nu},\overrightarrow{d},\overrightarrow{\mu},\overrightarrow{f})$ satisfy a recursion relation similar to the Cut and Join equation of Hurwitz numbers?
\end{Quest}

    It serves as a challenging exercise to compute the base cases in the Question above.
     It is an interesting question in the area of Topological Recursion to further relate these recursions to enumerative geometry and Gromow-Witten theory establishing a rigorous mathematical foundation for these numbers with respect to intersection classes on moduli spaces of curves.

\nocite{*}
\setlength\bibitemsep{2\itemsep}
\printbibliography[title=REFERENCES]
\end{document}